\newtheorem{theorem}{Theorem}[section]
\newtheorem{lemma}[theorem]{Lemma}
\newtheorem{remark}[theorem]{Remark}
\newtheorem{assumptions}[theorem]{Assumptions}
\newtheorem{assumption}[theorem]{Assumption}
\theoremstyle{definition}
\newtheorem{algor}{Algorithm}
\newcommand{\yn}{y}
\newcommand{\tr}{{\rm {Tr}}}
\newcommand{\Var}{\mbox{Var}}
\newcommand{\G}{\mathcal{N}}
\newcommand{\R}{\mathbb{R}}
\newcommand{\rp}{\mathbb{P}}
\newcommand{\D}{\mathcal{D}}
\newcommand{\hiddensection}[1]{
\stepcounter{section}
\section*{}}
\newcommand{\1}{\mathds{1}}
\newcommand{\N}{\mathbb{N}}
\newcommand{\A}{\mathcal{A}}
\newcommand{\bA}{\bm{\mathcal{A}}}
\newcommand{\C}{\mathcal{C}}
\newcommand{\bC}{\bm{\mathcal{C}}}
\newcommand{\bz}{\bm{z}}
\newcommand{\bI}{\bm{I}}
\newcommand{\bP}{{\bf P}}
\newcommand{\bG}{\bm{\mathcal{G}}}
\newcommand{\bxi}{\bm{\xi}}
\newcommand{\bv}{\bm{v}}
\newcommand{\bx}{\bm{x}}
\newcommand{\bw}{\bm{w}}
\newcommand{\bme}{\bm{m}}
\newcommand{\brho}{\bm{\rho}}
\newcommand{\bT}{\bm{T}}
\newcommand{\be}{\bm{e}}
\newcommand{\bSigma}{\bm{\Sigma}}
\newcommand{\bK}{\bm{K}}
\newcommand{\bmeta}{\bm{\eta}}
\newcommand{\X}{\mathcal{X}}
\newcommand{\Y}{\mathcal{Y}}
\newcommand{\norm}[1]{\big\|#1\big\|}
\newcommand{\smnorm}[1]{\|#1\|}
\newcommand{\sumj}{\sum_{j=1}^{\infty}}
\newcommand{\E}{\mathbb{E}}
\newcommand{\pr}[2]{\big\langle#1,#2\big\rangle}
\newcommand{\envelope}{(\raisebox{-.5pt}{\scalebox{1.45}{\Letter}}\kern-1.7pt)}
\newcommand{\hl}{\lambda}
\newcommand{\I}{{\rm I}}
\newcommand{\bu}{\bm{u}}
\newcommand{\by}{\bm{y}}
\newcommand{\alp}{\alpha}
\newcommand{\bet}{\beta}
\newcommand\smallO[1]{
  \mathchoice
    {{\scriptstyle\mathcal{O}}}% \displaystyle
    {{\scriptstyle\mathcal{O}}}% \textstyle
    {{\scriptscriptstyle\mathcal{O}}}% \scriptstyle
    {\scalebox{.7}{$\scriptscriptstyle\mathcal{O}$}}%\scriptscriptstyle
  {\left(#1\right)}}
\newcommand{\de}[1]{\delta^{(#1)}}
\newcommand{\si}[1]{\lambda}
\newcommand{\sumn}{\sum_{j=1}^{N}}
\newcommand{\Ga}{{\rm {Gamma}}}
\newcommand{\ad}{\upalpha_0}
\newcommand{\bd}{\upbeta_0}
\begin{document}
\title{\textbf{Analysis of the Gibbs sampler for hierarchical inverse problems}}
\author{Sergios Agapiou \\\small Mathematics Institute, University of
Warwick\\ \small Coventry CV4 7AL, United Kingdom\\\small S.Agapiou@warwick.ac.uk\\\\ Johnathan M. Bardsley \\\small Department of Mathematics Sciences, University of Montana\\\small Missoula, MT, 59812-0864 USA\\\small BardsleyJ@mso.umt.edu\\\\Omiros Papaspiliopoulos \\\small Department of Economics, Universitat Pompeu Fabra\\ \small Ramon Trias Fargas 25-27, 08005 Barcelona, Spain\\\small Omiros.Papaspiliopoulos@upf.edu\\\\Andrew M. Stuart\\\small Mathematics Institute, University of
Warwick\\\small Coventry CV4 7AL, United Kingdom\\\small A.M.Stuart@warwick.ac.uk
} 
\date{}
\maketitle

{\bf Abstract} Many inverse problems arising in applications come from continuum models
where the unknown parameter is a field. In practice the unknown field
is discretized resulting in a problem in $\R^N$, with an understanding
that refining the discretization, that is increasing $N$, will often be desirable.
In the context of Bayesian inversion this situation suggests the importance
of two issues:  (i) defining hyper-parameters
in such a way that they are interpretable in the continuum limit $N \to \infty$ and so that their values may be compared between different discretization levels; (ii) understanding the efficiency of algorithms for probing
the posterior distribution, as a function of large $N.$
Here we address these two issues in the context of linear inverse problems
subject to additive Gaussian noise within a hierarchical modelling
framework based on a Gaussian prior for the unknown
field and an inverse-gamma prior for a hyper-parameter, namely the amplitude of the prior variance. The structure of the model is such that the Gibbs sampler
can be easily implemented for probing the posterior distribution. Subscribing to the dogma that one should think infinite-dimensionally before implementing in finite dimensions, we present function space intuition and provide rigorous theory showing that as $N$ increases, the component of the Gibbs sampler for sampling the amplitude of the prior variance becomes increasingly slower. We discuss a reparametrization of the
prior variance that is robust with respect to the increase in dimension; we give numerical experiments which exhibit that our reparametrization prevents the slowing down.
Our intuition on the behaviour of the prior hyper-parameter, with and without reparametrization, is sufficiently general to include a broad class of nonlinear inverse problems as well as other families of hyper-priors.

\vspace{0.4cm}

{\bf Key words.} Gaussian process priors, Markov chain Monte Carlo, inverse covariance operators, hierarchical
models, diffusion limit.\\

{\bf  2010 Mathematics Subject Classification.} 62G20, 62C10, 62D05, 45Q05.

\vspace{0.4cm}

% The thanks line in the title should be filled in if there is
% any support acknowledgement for the overall work to be included
% This \thanks is also used for the received by date info, but
% authors are not expected to provide this.

\pagestyle{myheadings}
\thispagestyle{plain}
\markboth{S. Agapiou, J. M. Bardsley, O. Papaspiliopoulos, A. M. Stuart}
{Analysis of Gibbs sampler for inverse problems}

%%%%%%%% INTRODUCTION %%%%%%%

\section{Introduction}\label{ch3:sec:int}
We consider the possibly  nonlinear inverse problem of recovering an unknown parameter $\bu\in\X$ from a noisy 
indirect observation $\by\in\Y$. We work in a framework where   $\X$ is an infinite-dimensional 
separable Hilbert space with inner product $\pr{\cdot}{\cdot}$ and
norm $\smnorm{\cdot}$, and $\Y$ is also a separable Hilbert space. We
will be especially interested in the case $\Y=\X$ or $\Y=\R^M$. The
unknown parameter and the observation are related through an 
 additive noise model \begin{equation}
\label{ch3:eq:1} \by=\bG(\bu)+\bmeta,\end{equation} where $\bG:\X\to\Y$ is the forward map which is assumed to be continuous, and $\bmeta$ 
is Gaussian noise \begin{align}\label{ch3:eq:2}\bmeta\sim\G(0,\lambda^{-1}\bC_1).\end{align} The linear operator $
\bC_1:\Y\to\Y$ is bounded and positive definite and $\lambda>0$ models the noise level;
we do not enforce that $\bC_1$ is trace-class, thereby allowing the case of
Gaussian white noise where it is the identity.

We adopt a Bayesian approach with a Gaussian prior on the unknown parameter $\bu$ 
\begin{align}\label{ch3:eq:3}\bu|\delta\sim\G(0,\delta^{-1}\bC_0),\end{align}
 where $\bC_0:\X\to\X$ is a positive definite and trace-class operator and $\delta>0$ models the amplitude of the prior variance; the unknown $\bu$ is assumed to be independent of the noise $\bmeta$. The trace-class
assumption on $\bC_0$ ensures that draws from the prior on
$\bu|\delta$ are in $\X$. For a fixed $\bu$ the likelihood is
Gaussian, $\by|\bu,\delta\sim\G(\bG 
 (\bu),\lambda^{-1}\bC_1)$. We work under certain regularity
 conditions on the forward map $\bG$, which imply that the inverse
 problem is sufficiently ill-posed; in particular, for the noise model
 at hand, these conditions imply that the unknown $\bu$ is not
 perfectly identifiable from a single realization of the data. Under
 the additional assumption that the prior on $\bu|\delta$ is such that
 the regularity conditions on $\bG$ are satisfied in its support, it can be shown that  almost surely with respect to the data the posterior on $\bu|\by,\delta$ is well defined, non-degenerate and absolutely continuous with respect to the prior on $\bu|\delta$, \cite{AS10}.

In what follows, we consider the hyper-parameter $\delta$ as a part of
the inference problem, that is, we endow it with a prior
$\rp(\delta)$; this leads to a hierarchical Bayesian model. The potential for the use of hierarchical priors in inverse problems has been highlighted in \cite{KS05}, where the authors express the conviction that \emph{if a parameter is not known, it is a part of the inference problem}; see also \cite{CS08, CHPS09} where conditionally Gaussian hierarchical models have been considered in finite dimensional contexts. Returning to our setting, we note that of course in practice other aspects of the model, such as
parameters that control the regularity of the draws from the prior,
will also be part of the inference problem. Section \ref{ch3:sec:con}
discusses how the results of this paper can be extended to such
situations, but the focus here is the joint hierarchical inference on $\bu$ and
$\delta$.  Statistical inference is achieved by Markov chain Monte Carlo sampling from the
resulting full posterior on $\bu,\delta|\by$,  where by Bayes'
rule \[\rp(\bu,\delta|\by)\propto\rp(\by|\bu,\delta)\rp(\bu|\delta)\rp(\delta)\propto\rp(\bu|\by,\delta)\rp(\delta|\by).\]
A sufficient condition for this posterior to be well defined is that
the prior $\rp(\delta)$ is proper. 

Due to the nature of the pair $(\bu,\delta)\in\X\times
[0,\infty)$, sampling can be achieved by a two-component Metropolis-within-Gibbs (MwG)
algorithm.  There is a range of possible parametrizations for this MwG
algorithm, perhaps the most natural of which is the so-called
\emph{centered algorithm} (CA), 
\cite{PRS07}.  This scheme  alternates between simulating from
$\bu|\by,\delta$ and $\delta|\by,\bu$  using 
Metropolis-Hastings steps.  Each pair of such simulations is one
algorithmic iteration of a  prescribed number
$k_{max}$. For specific models the simulation from the two
conditionals can be done directly, without Metropolis-Hastings, in
which case the resultant algorithm is the Gibbs sampler. Note that the model structure implies that
$\delta$ and $\by$ are conditionally independent given $\bu$, that is
$\delta|\by,\bu\equiv \delta | \bu$. This is the defining property of the
so-called \emph{centered parameterisation} of a hierarchical model,
\cite{PRS07}.% see Section \ref{sec:linear}. 

In practice the inverse problem and the algorithm are discretized and Bayesian inference is
implemented in finite dimensions. We then have two sources of error in
the estimated posterior distribution: a) the approximation error due
to the discretization of the unknown and the forward problem, that is
the discretization bias, {discussed in a general Bayesian (non-hierarchical) inverse problem setting in  
\cite{CDS10}; b) the Monte Carlo error due to the use of a Markov
chain Monte Carlo method to sample the discretized posterior
distribution. Assuming that the discretization level of the unknown is
$N$, we have that the total error
is of the order  \begin{align}{\label{ch3:eq:toterr}}\frac{1}{N^s}+\frac{C(N)}{\sqrt{k_{max}}},\end{align}for
some $s>0$ which relates to the quality of approximation of the
unknown and forward problem,  and $C(N)$ which depends on the mixing
properties of the particular algorithm used to probe the
posterior. This picture allows the practitioner to get a rough idea
how to 
distribute the computational budget by balancing investments in
higher discretization levels with investments in longer chains in
order to achieve the desired error level in the estimated posterior
distribution. In reality, of course,  the constants that
multiply these rates will be relevant and hard to determine. 

 There are four principal
motivations for formulating the inverse problem and the simulation
algorithms in infinite
dimensions, while using consistent discretizations (in the sense of
numerical analysis, see subsection \ref{ch3:sec:dm}) for the numerical
implementation. First, such formulation is often more faithful to the
mathematical model that we wish to learn from the data. Second, 
it makes   the inference comparable
across different levels of discretization, so that the estimation of the model with increasing
values of $N$ corresponds to a reduction in the discretization bias at
the cost of additional computation. Third, the prior distribution on
hyperparameters, such as $\delta$, represents the same prior
beliefs across different levels of discretization. On the contrary, 
when the finite-dimensional model is not a consistent discretization
of an infinite-dimensional one, the prior on the hyperparameters might
contain an amount of information that depends on the level of
discretization chosen; see for example the last paragraph in subsection \ref{fd} below. Finally, practically useful algorithms
can be designed for moderate or even small values of $N$ by studying
their behaviour at the asymptotic limit $N \to \infty$.  In fact, it
is usually unrealistic to try to obtain
practically useful theoretical results on the convergence of Markov
chain Monte Carlo for sampling non-trivial targets, unless such
asymptotic regimes are constructed and invoked. This is precisely the
case with the Gibbs sampler and related MwG algorithms, which are
particularly hard to analyse (see for example
 \cite{stable}). Similarly, conceiving of Metropolis-Hastings methods in the infinite-dimensional limit leads to algorithms with provably dimension-independent convergence properties, whilst standard methods have convergence properties which degenerate with increased refinement of the discretization; see \cite{CRSW13} and discussion therein.

 In this paper we investigate theoretically and numerically the
 performance of MwG algorithms  in the asymptotic regime of large
 $N$. In order to have a mathematically tractable analysis, we
 focus on linear inverse problems, see subsection
 \ref{sec:linear}. For these models, and under a commonly adopted prior on
 $\delta$, the MwG becomes a Gibbs sampler.  We establish a result
 on the mean drift and diffusion of the $\delta$-chain in CA, which has the informal interpretation that $C(N)$ is of the order
 $N^{1/2}$. An immediate consequence of this result, is that in order to minimize the total error in (\ref{ch3:eq:toterr}), $k_{max}$ should be scaled like $N^{1+2s}$, whilst for algorithms for which $C(N)$ is uniformly bounded with respect to $N$, the same error level can be achieved by scaling $k_{max}$ like $N^{2s}$; we expect this to be the case for the non-centered algorithm proposed later in this section.  We emphasize that although we prove this result for the linear model and for a specific prior on $\delta$, a detailed understanding of the ideas underlying our proofs, indicates that most of the details of the model,
 including linearity, and the prior used on $\delta$, do not really
 affect the validity of our main finding, that is, that CA deteriorates
 with $N$. The fundamental reason why this algorithm becomes unusable
 for large $N$ is an absolute continuity property, 
 a high-level description of which we now provide. Note, however, that proving the result
 in such generality is definitely beyond the scope of this paper.

  In the infinite-dimensional limit, $\delta$ is an almost sure property of
$\bu|\delta\sim\G(0,\delta^{-1}\bC_0)$. This
means that a single draw of $\bu$ contains infinite information about
the value of $\delta$ that generated it. In measure-theoretic terms,
it means that the prior measures $\rp(\bu | \delta)$ and $\rp(\bu |
\delta')$ for $\delta \ne \delta'$ are mutually singular,
\cite[Remark 2.10]{DP05}. 
Recalling that we work under assumptions which imply that  $\bu|\by,\delta$ is
absolutely continuous with respect to $\bu|\delta$, we deduce that $\delta$ is also an almost sure property of
$\bu|\by,\delta$. As a result,  iterative simulation from the distributions, $\bu|\by,\delta$ and $\delta|\by,\bu$,
 will fail in ever changing the initial value
of $\delta$. On the other hand, recall that we also work under assumptions that
imply  that  $\bu$, hence $\delta$, are not perfectly identifiable
from the data. Therefore,  $\delta|\by$ is non-degenerate (provided
the prior is non-degenerate) and hence
any single value of $\delta$ has zero probability under the data. Concatenating, we have that when iteratively simulating from $\bu|\by,\delta$ and  $\delta|\by,\bu$, the values of $\bu$ will be
changing along the iterations, but will be in fact sampled from a subspace
which has probability zero under $\rp(\bu | \by)$. In other
words CA is \emph{reducible} in infinite dimensions  and
will fail to sample from $\bu,\delta | \by$.  Iterative conditional
sampling of the finite-dimensional approximation of $\bu, \delta|\by$,
will be able to obtain samples from the (approximated) posterior
distribution of $\delta$, but will suffer
from increasingly slow \emph{mixing}  as the discretization
level  $N$ increases. In fact, the dependence between the discretized
unknown parameter $u$ and $\delta$ increases with $N$, and becomes
infinitely strong in the limit $N\to \infty$; it is this dependence
that slows down the MwG.

In order to alleviate the undesirable effects of the strong dependence
between the prior on $\bu$ and $\delta$, using intuition from
\cite{PRS07, RS01}, we reparametrize the prior by writing
$\bu=\delta^{-\frac12}\bv$ where $\bv\sim\G(0,\bC_0)$ and $\delta\sim
\rp(\delta)$. This results in a MwG algorithm which alternates between
a step of updating $\bv|\by,\delta$ %evgala to transformation giati to exoume meta
and a step of updating $\delta|\by,\bv$; this
is an example of a \emph{non-centered algorithm} (NCA),
\cite{PRS07}. Since $\bv$ and $\delta$ are now a priori independent,
and recalling that $\bu$ is not perfectly identified by the data,
the dependence of these two parameters is not perfect conditionally on
the data. Thus, 
the NCA is  \emph{irreducible} in infinite dimensions and is thus robust with respect to the discretization level
$N$.  Hence, for NCA we expect that $C(N)$ is uniformly
bounded with respect to $N$; we show numerical evidence in support of
this statement in section \ref{ch3:sec:sim}.

 \subsection{The linear case - modelling and notation}
\label{sec:linear}

 We will concentrate on the linear inverse problem
case with gamma priors on $\delta$ which has the convenient property
of conditional conjugacy. Specifically, 
we restrict our attention to the case $\bG=\bK$ where $\bK:\X\to\Y$ is
a bounded linear operator.  Then,  the posterior distribution  $\bu|\by,\delta$ is also Gaussian 
  \begin{align*}\bu|\by,\delta\sim\G(\bme_{\lambda,\delta}(\by),\bC_{\lambda,\delta});\end{align*} 
see \cite{AM84,LPS89} where formulae for the posterior mean and
covariance operator are provided.  When the prior distribution and the
noise are specified in terms of precision operators (that is, inverse
covariance operators)  the following expressions for the posterior mean
and precision are known to hold in a range of situations in \cite{ALS13, ASZ12}:
\begin{align}\bC_{\lambda,\delta}^{-1}&=\lambda \bK^\ast\bC_1^{-1}\bK+\delta\bC_0^{-1},\label{ch3:eq:prec}\\
\bC_{\lambda,\delta}^{-1}\bme_{\lambda,\delta}(\by)&=\lambda \bK^\ast
 \bC_1^{-1}\by.\label{ch3:eq:mean}
 \end{align}

In order to introduce discretizations and their
connection to the continuum limit we need some additional notation;
subsection \ref{ch3:sec:dm} gives specific examples of continuum
models and their discretizations, where the notation introduced below
is put into practice. In order to avoid a notational overload, in the
development of the theory we assume that $\X=\Y$ and that the
discretization levels of the unknown and the data are the same. This
assumption is not crucial to our results and we refer to the PhD
thesis \cite[section 4.5]{SA13} for the more general statements. Furthermore, in section \ref{ch3:sec:sim}, we present numerical examples corresponding to both $\Y=\X$ with an increasing discretization level which is the same for both the unknown and the data, and $\Y=\R^M$ for some fixed $M$, whilst the dimension of the discretization of the unknown is increased. The case $\Y=\X$ arises for example when we observe the whole unknown function subject to blurring and noise, while the case $\Y=\R^M$ can arise when we have available blurred and noisy observations of the unknown at only $M$ spatial locations (see subsection \ref{fd}). The two cases can also arise if we work in the spectral domain, depending on the availability of observations of a full or only a partial spectral expansion of a blurred noisy version of the unknown. 

We denote by $\pr{\cdot}{\cdot}_{\R^N}$ and $\smnorm{\cdot}_{\R^N}$ the 
 (possibly scaled) Euclidean inner product and norm in $\R^N$ and by $\smnorm{\cdot}_{2,N}$ the 
 induced operator norm for $N\times N$ matrices. Throughout the paper we assume that this norm and inner product on 
 $\R^N$ are scaled so that,
formally, the large $N$ limit recovers the norm and inner product on the Hilbert space when,
for example, spectral or finite difference approximations are made.  Henceforward, we use boldface and
regular typeface letters to distinguish between infinite and finite-dimensional objects respectively. We assume that we have a way of 
computing discretizations ${\yn}\in\R^N$ of the observation $\by$ and replace the operators $\bK, \bC_0$ 
and $\bC_1$ by the $N\times N$ matrices $K, \C_0$ and $\C_1$ respectively, which arise from a 
consistent, in the sense of numerical analysis,
family of approximations of the corresponding operators. In this finite-dimensional  setting, the unknown
is $u \in \R^N$ and it is assigned a finite-dimensional Gaussian
prior, $u|\delta \sim \G(0,\delta^{-1}\C_0)$. The noise distribution has Lebesgue
density and the corresponding  log-likelihood 
is quadratic in $u$. Thus, standard
Bayesian linear theory (see e.g.  \cite{LS72}) implies  that the posterior is also Gaussian,
$u|y,\delta\sim\G(m_{\lambda,\delta}(y), \C_{\lambda,\delta})$, where
$m_{\lambda,\delta}(y)$ and $\C_{\lambda,\delta}^{-1}$  solve equations
(\ref{ch3:eq:prec}) and (\ref{ch3:eq:mean}) where the boldface
infinite-dimensional quantities are replaced by the corresponding
finite-dimensional regular typeface quantities.

{Bayesian modelling for finite-dimensional approximations of
linear inverse problems using  Gaussian priors and noise models was
recently carried out in \cite{JB12}. The approach consisted in
simultaneous inference for the unknown $u$ and the hyper-parameters 
$\lambda$ and $\delta$. We will concentrate on simultaneous inference on $u$ and $\delta$ only, since $\lambda$ can be efficiently estimated from a single high dimensional realization of the data, for example using quadratic variation. We again refer the interested reader to the PhD thesis \cite[Chapter 4]{SA13} for theoretical and numerical results on the large $N$ behaviour of $\lambda$ when considered as part of the inference problem; we stress here that for low-dimensional data, the inference on $\lambda$ is non-trivial. In \cite{JB12}, a standard conditionally conjugate prior was
used for the hyper-parameter, 
$\delta\sim\Ga(\ad,\bd)$, which in this
type of finite-dimensional Gaussian models is known to lead to a gamma
conditional posterior distribution, \cite[Chapter 5.2]{BS09}\begin{equation}\label{ch3:eq:int4}\delta|{\yn},u\sim\Ga(\ad+
\frac{N}2, \bd+\frac12\norm{\C_0^{-\frac12}u}_{\R^N}^2).\end{equation}}

The inference for this model was carried out using CA which in this case is a Gibbs sampler (see Algorithm \ref{ch3:algstd} in section \ref{ch3:sec:met} below), since both conditional distributions $u|y,\delta$ and $\delta|y,u$ belong to known
parametric families and can be sampled directly. One of the main aims of this paper is to analyze the convergence of this algorithm in the large $N$ limit. We also aim to exhibit via numerical simulations, the deterioration of the performance of CA in the large $N$ limit, as well as the benefits of reparametrizing the prior and using the corrseponding NCA (see Algorithm \ref{ch3:algrep} in section \ref{ch3:sec:met} below).

\subsection{Examples of consistent discretizations}\label{ch3:sec:dm}
In order to aid the understanding of the paper and in anticipation of the subsequent developments, we briefly describe two methods for passing from the continuum infinite-dimensional model in $\X$ to a discrete model in $\R^N$. Here and elsewhere in the paper, we define a Gaussian white noise in $\R^N$ to be a random variable $\zeta$ given as
$\zeta=\sumn\zeta_je_j,$
where $\{e_j\}_{j=1}^N$ is a basis in $\R^N$ which is orthonormal in the possibly scaled Euclidean inner product $\pr{\cdot}{\cdot}_{\R^N}$, and $\{\zeta_j\}_{j\in\N}$ is a sequence of independent standard Gaussian random variables in $\R$.
\subsubsection{Spectral truncation}\label{sp}
Let $\{\be_j\}_{j\in\N}$ be a complete orthonormal basis in $\X$. An element $\bw\in\X$ can be identified with the sequence $\{w_j\}_{j\in\N}$ of coefficients $w_j:=\pr{\bw}{\be_j}$ and by Parseval's identity the Hilbert space norm of $\bw$ can be replaced by the $\ell_2$-norm of the sequence of coefficients (similarly for the inner product). One can then discretize $\bw$ by replacing it with $w\in{\rm{span}}\{\be_1,...,\be_N\}$ which is identified with the truncated sequence of coefficients $\{w_1,...,w_N\}\in\R^N$. The $\ell_2$-norm and inner product are then replaced by the Euclidean norm and inner product.
Let $\bSigma:\X\to\X$ be a bounded operator which is diagonalizable in $\{\be_j\}_{j\in\N}$ with eigenvalues $\{\mu_j^{\Sigma}\}_{j\in\N}$. The operator $\bSigma$ can be identified with the sequence $\{\mu_j^{\Sigma}\}_{j\in\N}$ and we can discretize $\bSigma$ at level $N$ by replacing it with the  finite rank operator which is identified with the $N
\times N$ diagonal matrix $\Sigma=\rm{diag}(\mu_1^{\Sigma},...,\mu_N^{\Sigma})$. If $\bx\sim\G(0,\bSigma)$ is a Gaussian random variable in $\X$, we can discretize by replacing $\bx$ with $x\in{\rm{span}}\{\be_1,...,\be_N\}$ which is identified with a random variable with distribution $\G(0,\Sigma)$ in $\R^N$. Equivalently, $x$ is identified with $\Sigma^\frac12x_0$ where $x_0$ is a Gaussian white noise in $\R^N$ with respect to the standard orthonormal basis of Euclidean space. 
For more details see subsection \ref{ch3:ssec:diag}.

\subsubsection{Finite differences approximation}\label{fd}
Let $\X=L^2(\I), \I=(0,1)$,  and denote by $\bA_0$ the negative Laplacian densely defined on $\X$ with domain $H^2(\I)\cap H^1_0(\I)$, that is with Dirichlet boundary conditions. We discretize the domain $\I$ using a grid of $N$ equally spaced points $\{\frac{1}{N+1},...,\frac{N}{N+1}\}$; we can restrict our attention to the interior points due to the Dirichlet boundary conditions.  We define the inner product and norm in $\R^N$ \begin{align*}\pr{u}{v}_{\R^N}=\frac{1}{N+1}\sumn u_jv_j \quad\mbox{and} \quad\norm{u}_{\R^N}=\bigg(\frac{1}{N+1}\sumn u_j^2\bigg)^\frac12.\end{align*} Note that the natural orthonormal basis on the $N$-dimensional space of grid points with respect to the above norm and inner product is $\{e_j\}_{j=1}^N$, with $e_j=\{\sqrt{N+1}\delta_{ij}\}_{i=1}^N$, where $\delta_{ij}$ is Kronecker's delta.
 For a function $\bu$ in $\I$ which vanishes on the boundary, we consider its discretization on the grid, hence  $u_j=\bu(\frac{j}{N+1})$. We thus have a discrete approximation of $\X$ with norm and inner product which are the discrete analogues of the $L^2$-norm and inner product. We use finite differences to discretize $\bA_0$. In particular, we replace $\bA_0$ by the $N\times N$ matrix 
\begin{equation*}\A_0=(N+1)^2 \begin{bmatrix} 
\;\;2 & -1 & \;\;0 & \;\;\hdots & \;\;0 \\
-1& \;\;2 & -1 & \;\;\ddots & \;\;\vdots\\
\;\;0 &\;\;\ddots & \;\;\ddots & \;\;\ddots &\;\;0\\
\;\;\vdots &\;\;\ddots &-1 & \;\;2 &-1 \\
\;\;0&\;\;\hdots&\;\;0&-1&\;\;2
\end{bmatrix}.
\end{equation*}
%We once more observe that  for any $t>0$ the discretization of the operator $t\bA_0$ is $t\A_0$.
If $\bz\sim\G(0,\bSigma)$ is a Gaussian random variable in $\X$ where
$\bSigma$ is a function of $\bA_0$ (for example a power), we discretize $\bz$ by considering the $N$-dimensional random variable $z=\Sigma^\frac12z_0$ defined on the grid, where $\Sigma$ is the corresponding function of the matrix $\A_0$ and $z_0$ is a Gaussian white noise with respect to $\{e_j\}_{j=1}^N$.

In subsection \ref{ch3:nex2} we consider subsampling at a set of $M$ equally spaced points amongst the $N$ grid points, where $\frac{N+1}{M+1}$ is a nonnegative power of 2. To this end, we define the matrix $P\in\R^{M\times N}$ by
\begin{align*}P_{i,j}=\left\{\begin{array}{ll} 1,  & \mbox{if} 
\;\mbox{$j=i\frac{N+1}{M+1}$} 
                                     \\ 0, &\mbox{otherwise}.
                                    \end{array}\right.
\end{align*} The matrix $P$ maps the vector of values on the fine grid $\{{\bf u}(\frac{j}{N+1})\}_{j=1}^N$ to the subsampled vector of the values on the coarse grid $\{{\bf u}(\frac{i}{M+1})\}_{i=1}^M$. 
If we fix $M$ and let $N$ increase, then $P$ corresponds to a discretization of the operator ${\bf P}:C(\I)\to\R^M$ defined as $M$ pointwise evaluations at the points $x_i=\frac{i}{M+1}, \;i=1,...,M$, $({\bf Pu})_i={\bf u}(\frac{i}{M+1})$, for any continuous 
function ${\bf u}$. A formal calculation suggests that the adjoint of the pointwise evaluation operator at $x\in \I$, is an
 operator mapping $r\in\R$ to $r\delta_x$, where $\delta_x$ is the Dirac distribution at $x$. This suggests that  ${\bf P^\ast}:\R^M\to C(\I)$, maps $r\in\R^M$ to the linear combination of Dirac distributions $r_1\delta_{x_1}+...+r_M\delta_{x_M}$. At the same
  time the matrix $P^T\in\R^{N\times M}$ maps the vector of values on the coarse grid $\{{\bf u}(\frac{i}{M+1})\}_{i=1}^M$ to a 
  vector in $\R^N$ which is zero everywhere except from the $i\frac{N+1}{M+1}$-th components where it is equal to ${\bf u}(\frac{i}{M+1})$, $i=1,...,M$. Combining, and in order to capture the effect of the Dirac distribution at the locations $\frac{i}{M+1}$, we have that ${\bf P^\ast}$ should be discretized using the matrix $(N+1)P^T$.
  
Note that if $\G(0,\delta^{-1}\mathcal{T}^{-1})$ is used as a prior on $u|\delta$ at level $N$, where $\mathcal{T}$ is the $N\times N$ tridiagonal matrix in the definition of $\A_0$, then this corresponds to having a prior with covariance matrix $(N+1)^2\delta^{-1}\A_0$. In particular, if $\delta\sim\Ga(\ad,\bd)$, then we have that $\frac{1}{(N+1)^2}\delta\sim\Ga(\ad,(N+1)^2\bd)$ where in the large $N$ limit the last gamma distribution converges to a point mass at zero, while $\A_0$ approximates $\bA_0$. This means that as $N\to\infty$ the correlation structure of the prior is described by the limiting $\bA_0$ but with an amplitude which becomes larger and larger with ever increasing confidence; in other words as $N$ grows the prior on $u|\delta$ looks increasingly flatter.

\subsection{Notation} 
We use subscripts to make explicit the dependence of the $\delta$-chain on the discretization 
level $N$ and superscripts to denote the iteration number in the Gibbs sampler. %We  also use the notation $m_{\si{k},\de{k}}:=m_{\lambda=\si{k},\delta=\de{k}}$ and similarly for $\C_{\si{k},\de{k}}$. 
For a random variable $x$ which depends on the mutually independent random variables $z_1$ 
and $z_2$, we use $\E^{z_1}[x]$ to denote the expectation of $x$ with 
respect to $z_1$ for fixed $z_2$. We use $x_1\stackrel{\mathcal{L}}{=}
x_2$ to denote that the random variables $x_1$ and $x_2$ have the same law. Finally, for two sequences 
of positive numbers $\{s_j\}$ and $\{t_j\}$, we use the notation $s_j\asymp t_j$ to mean that $s_j/t_j$ is 
bounded away from zero and infinity uniformly in $j$. 

\subsection{Paper structure}
In the next section we present the centered Gibbs and non-centered MwG algorithms in our assumed linear conjugate setting; we also discuss the option of integrating $u$ out of the data likelihood and the resulting marginal algorithm. In section \ref{ch3:sec:main} we present our main result on the deterioration of the centered Gibbs sampler which holds 
under certain assumptions made at the discrete level 
and which are stated explicitly in the same section. 
Our discrete level assumptions are typically inherited 
from Assumptions \ref{ch3:infass1} on the underlying infinite-dimensional 
model also stated in section \ref{ch3:sec:main}, when consistent numerical discretizations are used. In section \ref{ch3:sec:ex} we exhibit three classes of linear inverse problems satisfying our assumptions on the 
underlying infinite-dimensional model. For the first two of these classes, that is a class of mildly ill-posed 
and a class of severely ill-posed linear inverse problems both in a simultaneously diagonalizable setting, 
we also explicitly prove that our discrete level assumptions are inherited from the infinite-dimensional 
assumptions when discretizing via spectral truncation (see subsections \ref{ch3:ssec:diag} and 
\ref{ch3:ssec:sev}). In section \ref{ch3:sec:sim} we present numerical evidence supporting our theory and intuition on the deterioration of the centered algorithm and the merits of using the non-centered algorithm, using both spectral truncation (subsection 
\ref{ch3:nex1}) and discretization via finite differences and subsampling (subsection \ref{ch3:nex2}). The main body of the paper ends with concluding remarks in section \ref{ch3:sec:con}, 
while the Appendix in section \ref{ch3:sec:ap} contains the proof of our main result as well as several technical lemmas.

%%%%%% REPARAMETRIZATION%%%%%%

\section{Sampling algorithms}\label{ch3:sec:met}
We now present in more detail the different algorithms for 
sampling $u,\delta|y$ in linear hierarchical inverse problems, and provide a
high-level comparison of their relative merits in the asymptotic regime of large $N$. 

\subsection{Centered Algorithm (CA)}\label{CA}
We first provide pseudo-code for the most natural algorithm for sampling
$u,\delta|y$ in this linear conjugate setting, that is the centered
Gibbs sampler used in \cite{JB12} and discussed in section \ref{ch3:sec:int}. 

\begin{framed}
\begin{algor}\label{ch3:algstd}{\ }
\emph{\begin{enumerate}
\item[0.] Initialize $\delta^{(0)}$ and set $k=0;$
\item[1.] $u^{(k)}\sim \G\big(m_{\lambda,\delta^{(k)}}({\yn}),\C_{\lambda,\delta^{(k)}}\big);$
\item[2.]  $\delta^{(k+1)}\sim \Ga(\ad+\frac{N}2,\bd+\frac12\norm{\C_0^{-\frac12}u^{(k)}}_{\R^N}
^2);$
\item[3.] Set $k=k+1$. If $k<k_{max}$ return to step 1, otherwise stop.
\end{enumerate}}
\end{algor}
\end{framed}

\subsection{Non-centered Algorithm (NCA)}\label{NCA} 
We now formulate in more detail the non-centered algorithm introduced in section
\ref{ch3:sec:int}. We define the algorithm in the infinite-dimensional setting, and
then discretize it. We reparametrize the prior by writing
$\bu=\delta^{-\frac12} \bv$, where now $\bv \sim \G(0,\bC_0)$, and the observation
model becomes
\begin{equation}{\by}=\delta^{-\frac12} \bK
  \bv+\bmeta\,.\end{equation}
The MwG sampler is used to sample $\bv,\delta | \by$ by iteratively sampling
from the two conditionals. Recall from the discussion on CA in section \ref{ch3:sec:int}, that $\delta|\by,\bu\equiv \delta|\bu$ and note that $\delta |
\by,\bv$, no longer simplifies to $\delta | \bv$, since even
conditionally on $\bv$, $\delta$ and $\by$ are dependent; this is the
non-centered  property in the hierarchical model, \cite{PRS07}. Additionally, note
that a practically useful way to sample from $\bv | \by,\delta$, which
recycles available code for CA, is to first sample $\bu | \by,\delta$,
as in CA, and then transform $\bu$ to $\bv$ via $\bv =
\delta^{\frac12} \bu$. Finally, for reasons of efficiency described below, we  prefer to
sample $\tau=\delta^{-\frac12}$ instead of $\delta$ directly. In order to obtain the
same Bayesian model as the one before the transformation, the prior
distribution for $\tau$ should be the one obtained from the prior on
$\delta$ after the $1/\sqrt{\delta}$ transformation, that is a square
root of an inverse-gamma distribution. Of course, we can
deterministically calculate $\delta=1/\tau^2$ after each such update,
to get $\delta$-samples and proceed to the next conditional simulation
in the algorithm. 

The finite-dimensional discretization of the algorithm is obtained in
the same way as CA. We notice that 
the log-likelihood is quadratic in $\tau$, for given $v$. We can
exploit this property to sample $\tau$ efficiently. The conditional posterior $\tau|{\yn},v$  is not
Gaussian, because the
prior on $\tau$ is not Gaussian, hence for our numerical results we replace
direct simulation from the conditional with a Metropolis-Hastings step
that targets the conditional. Given that the conditional posterior is
the product of the prior and the conditional likelihood, and we expect
the likelihood to be the dominant term of the two, we use the
likelihood, seen as a function of $\tau$, as a proposal density in the
Metropolis-Hastings step. The likelihood as a
function of $\tau$ is Gaussian $\G(r_{\lambda,v},q_{\lambda,v}^2)$, where 
\begin{equation}\frac{1}{q_{\lambda,v}^2}=\lambda\norm{\C_1^{-\frac12} Kv}_{\R^N}^2, \quad\quad\frac{r_{\lambda,v}}{q_{\lambda,v}^2}=\lambda\pr{ K^\ast\C_1^{-1}{\yn}}{v}_{\R^N},\end{equation}
hence easy to simulate
from. Proposals generated in this way are immediately rejected if
negative, and if not they are accepted according to the
Metropolis-Hastings ratio that by construction only involves the prior
density. Note that the same complication would arise had we chosen to work with $\delta$ instead of $\tau$, since $\delta|y,v$ is also not a known distribution. The difference in that case is that there is no apparent good proposal density for the Metropolis-Hastings step, since the likelihood is not a known distribution as a function of $\delta$.

We use the following Gibbs sampler, where $p(\cdot)$ denotes the density of the square root of the inverse-gamma distribution with parameters $\ad, \bd$:

\begin{framed}
\begin{algor}\label{ch3:algrep}{\ }\emph{\begin{enumerate}
\item[0)]Initialize $\tau^{(0)}$, calculate $\delta^{(0)}=1/(\tau^{(0)})^2$ and set $k=0;$
\item[1)] $u^{(k)}\sim\G\big(m_{\si{k},\delta^{(k)}}({\yn}),\C_{\si{k},\delta^{(k)}}\big)$;\\
$v^{(k)}=(\de{k})^{\frac12}u^{(k)}$;
\item[2)] propose $\tau\sim\G(r_{\si{k},v^{(k)}},q_{\si{k},v^{(k)}}^2)$;\\
 if $\tau\leq0$ reject;
 if $\tau>0$ accept with probability $\frac{p(\tau)}{p(\tau^{(k)})}\wedge1$ otherwise reject;\\
 if $\tau$ accepted set $\tau^{(k+1)}=\tau$, otherwise set $\tau^{(k+1)}=\tau^{(k)}$;\\
$\de{k+1}=1/(\tau^{(k+1)})^2$;
\item[3)]Set $k=k+1$. If $k<k_{max}$ return to step 1, otherwise stop.
\end{enumerate}}
\end{algor}
\end{framed}

\subsection{Marginal Algorithm (MA)}\label{MA}
Given that $\bu$ (hence $\bK\bu$) and $\bmeta$ are independent
Gaussian random variables, the marginal distribution of the data $\by$ given
$\delta$ is also Gaussian, 
\[\by|\delta\sim\G(0,\delta^{-1}\bK\bC_0\bK+\lambda^{-1}\bC_1)\,.
\]
 One can then use Bayes' theorem to get that \[\rp(\delta|\by)\propto
 \rp(\by|\delta)\rp(\delta).\] This distribution can be sampled using
 the random walk Metropolis (RWM) algorithm. In order to get samples from $\bu,\delta|\by$, we alternate between drawing $\delta|\by$ and updating $\bu|\by,\delta$. Furthermore, it is beneficial to the
 performance of the RWM, to sample $\log(\delta)|\by$ instead of
 $\delta|\by$; of course, samples from $\log(\delta)|\by$ can be deterministically transformed to samples from $\delta|\by$. The resultant algorithm is what we 
 call the  \emph{marginal algorithm} (MA). MA in the discrete
                                 level is as follows, where
                                 $p(\cdot)$ now denotes the density of
                                 the logarithm of a gamma distribution with
                                 parameters $\ad,\bd$ and $\rho=\log(\delta)$:
\begin{framed}
\begin{algor}\label{ch3:algmar}{\ }\emph{\begin{enumerate}
\item[0)]Initialize $\rho^{(0)}$ and set $k=0;$
\item[1)] $u^{(k)}\sim\G\big(m_{\si{k},\de{k}}({\yn}),\C_{\si{k},\de{k}}\big);$
\item[2)] propose $\rho\sim\G(\rho^{(k)},s^2)$;\\
accept with probability $\frac{\rp(y|\exp({\rho}))p_0(\rho)}{\rp\left(y|\exp(\rho^{(k)})\right)p_0(\rho^{(k)})}\wedge1$ otherwise reject;\\
 if $\rho$ accepted set $\rho^{(k+1)}=\rho$, otherwise set $\rho^{(k+1)}=\rho^{(k)}$;\\
 set $\de{k+1}=\exp(\rho^{(k+1)})$;
\item[3)]Set $k=k+1$. If $k<k_{max}$ return to step 1, otherwise stop.
\end{enumerate}}
\end{algor}
\end{framed}
%Samples from $u | \yn$ can be obtained by sampling from $u |\yn,\delta$, as in CA, where $\delta$ are samples produced by MA. 
We follow the rule-of-thumb proposed in \cite{GGR96} and choose the RWM proposal variance $s^2$ to achieve an acceptance probability around 44\%.

\subsection{Contrasting the methods}\label{contr}

As discussed in section \ref{ch3:sec:int}, and is formally shown
in section \ref{ch3:sec:main}, CA   will deteriorate as the
discretization level of the unknown, $N,$ becomes larger.  To get a
first understanding of this
phenomenon in the linear-conjugate setting, note that the $\Ga(\ad,\bd)$ distribution has mean and variance $\ad\bd^{-1}$ and 
$\ad\bd^{-2}$ respectively. Hence,  for any $\mu>0$, as $N$ grows, the random variable $
\Ga(\ad+\frac{N}2,\bd+\mu\frac{N}2)$ behaves like a Dirac distribution centred on $\mu^{-1}$.
Furthermore, we will show that, because of the consistency of
the approximation of the operators defining the Bayesian
inverse problem, together with scaling of the norms on $\R^N$
to reproduce the Hilbert space norm limit, it is natural to assume that
\[\norm{\C_0^{-\frac12}u^{(k)}}^2_{\R^N}\simeq (\de{k})^{-1}N.\]
Using the limiting behaviour of the gamma distribution described above, this means 
that as the dimension $N$ increases, we have $\de{k+1}\simeq \de{k}$ hence the $\delta$-chain 
makes very small moves and slows down.

In contrast, both conditionals $\bu|\by,\delta$ and $\delta|\by,\bv$ sampled in NCA are non-degenerate even
in the infinite-dimensional limit. 
Our numerical results show that this reparametrization is indeed robust
with respect to the increase in dimension (see section
\ref{ch3:sec:sim}), although establishing formally that a spectral gap exists
for NCA in this limit is beyond the scope of this paper.

Similarly, both distributions $\bu|\by,\delta$ and $\delta|\by$ sampled in MA are non-degenerate in the continuum limit, hence MA is robust
with respect to $N$. Moreover, MA is optimal with respect to the dependence between the two components of the algorithm, since the $\delta$-chain is independent of the $u$-draws; there is a loss of efficiency due to the use of RWM to sample $\delta|y$, but provided the proposal variance is optimally tuned, this will only have a minor effect in the performance of MA.
For these reasons, in section \ref{ch3:sec:sim} we use the optimally tuned MA as
the gold standard with which we compare the performance of CA and
NCA.  Nevertheless, we stress here that: \begin{enumerate}
\item[i)] MA requires at each iteration the
  potentially computationally expensive calculation of the square root
  and the determinant of the precision matrix of $y|\delta$. This makes
  the implementation of MA in large scale linear inverse problems less
  straightforward compared to CA and NCA.  \item[ii)]even though we view MA as a gold, albeit potentially expensive, standard in our linear setting, for nonlinear problems MA is not available. On the contrary, CA and NCA are straightforward to extend to the nonlinear case (see Section \ref{ch3:sec:con}); this is one of the principal motivations for studying the optimal parametrization of Gibbs sampling in this context.\end{enumerate}

%%%%%%%%%% MAIN RESULTS %%%%%%%%%%%%

\section{Theory}\label{ch3:sec:main}In this section we present our theory concerning the behaviour of CA as the discretization level increases, in the linear inverse problem setting introduced in subsection \ref{sec:linear}.
We first formulate our assumptions on the underlying infinite-dimensional model as well as a corresponding set of discrete-level assumptions, before presenting our main result on the large $N$ behaviour of Algorithm \ref{ch3:algstd}. 
\subsection{Assumptions} We work under the following assumptions on the underlying infinite-dimensional linear inverse problem:
\begin{assumptions}\label{ch3:infass1}{\ }
\begin{enumerate}
\item[i)] For any $\lambda,\delta>0$, we have $\bme_{\lambda,\delta}(\by)\in\D(\bC_0^{-\frac12})$ $\by$-almost surely; that is, the posterior mean belongs to the Cameron-Martin space of the prior on $\bu|
\delta;$
\item[ii)] $\bC_1^{-\frac12}\bK\bC_0 \bK^\ast\bC_1^{-\frac12}$ is trace-class; that is, the prior is sufficiently 
regularizing.
\end{enumerate}
\end{assumptions}

Assumption \ref{ch3:infass1}(ii) implies the second and 
third conditions of the Feldman-Hajek theorem \cite[Theorem
2.23]{DZ92}. Together with 
Assumption \ref{ch3:infass1}(i), they thus imply that  $\by$-almost surely  $\bu|\by, \delta$ is absolutely continuous with respect to
$\bu|\delta$ and hence the infinite-dimensional intuition on the behaviour of CA described in section \ref{ch3:sec:int} applies. 

In the following, we assume that $\C_0$ and $\C_1$ are positive definite $N\times N$ matrices which 
are the discretizations of the positive definite operators $\bC_0$ and $\bC_1$ respectively, and the $N\times 
N$ matrix $K$ is the discretization of the bounded operator $\bK$. Our analysis of the $\delta$-chain is valid under the following assumptions at the discrete level:
\begin{assumptions}\label{ch3:ass1}
{\ }
\begin{enumerate}
\item[i)] For almost all data $\by$, for any $\lambda,\delta>0$, there exists a constant $c_1=c_1(\by;\lambda,
\delta)\geq0$, independent of $N$, such that \begin{align*}\norm{\C_{0}^{-\frac12}m_{\lambda,\delta}
({\yn})}_{\R^N}\leq c_1;\end{align*}
\item[ii)] there exists a constant $c_2\geq0$, independent of $N$ and $\by$, such that \begin{align*}
\tr(\C_1^{-\frac12}K\C_{0}K^\ast\C_1^{-\frac12})\leq c_2.\end{align*}
\end{enumerate}
\end{assumptions}

These assumptions are typically 
inherited from Assumptions \ref{ch3:infass1} when consistent discretizations are used; see subsection \ref{ch3:sec:dm} and section \ref{ch3:sec:ex} for more details and examples. 

 \subsection{Main Result}
 We now present our main result on the behaviour of Algorithm \ref{ch3:algstd} in the asymptotic regime of large $N$. 
We start by noting that the two steps of updating $u|{\yn},\delta$ and $\delta|{\yn},u$ in Algorithm \ref{ch3:algstd}, can be compressed to give one 
step of updating $\delta$ and involving the noise in the $u$ update. Indeed, we denote by $\de{k+1}_N$  
the $\delta$-draw in the $k+1$ iteration of the Gibbs sampler where the problem is discretized in $\R^N$.
This draw is made using the previous draw of $u|{\yn},\delta$, which assuming that $\de{k}_N=\delta$, is 
denoted by $u_{\delta}^{(k)}$ and can be written as \begin{equation}\label{ch3:eq:uu}
u_{\delta}^{(k)}=m_{\lambda,\delta}({\yn})+\C_{\lambda,\delta}^{\frac12}\zeta,
\end{equation}where $\zeta$ is an $N$-dimensional Gaussian white noise representing the fluctuation in 
step 1, and $ \C_{\lambda,\delta}, m_{\lambda,\delta}$ are given by the formulae (\ref{ch3:eq:prec}), 
(\ref{ch3:eq:mean}) respectively.
 Hence we have
\begin{align}\label{ch3:eq:dd}\delta_N^{(k+1)}\sim\Ga(\ad+\frac{N}2,\bd+\frac12\norm{\C_0^{-\frac12}
u_{\delta}^{(k)}}_{\R^N}^2).\end{align}

Assumptions \ref{ch3:ass1} ensure that the squared norm appearing in (\ref{ch3:eq:dd}) behaves like $
\delta^{-1}N$, as assumed in the discrete level intuition discussed in subsection \ref{contr}. This is made precise in the following lemma which forms the backbone of our analysis and is proved in subsection \ref{ch3:ssec:ap1}. \begin{lemma}\label{ch3:lem1}
Under Assumptions \ref{ch3:ass1}, for any $\lambda,\delta>0$ we have,
\begin{align}\label{ch3:eq:denom}
\bd+\frac12\norm{\C_0^{-\frac12}u^{(k)}_{\delta}}^2_{\R^N}=\delta^{-1}\frac{N}{2}+\delta^{-1}\sqrt{\frac{N}{2}}
W_{1,N}+F_N(\delta),
\end{align}
where i) $W_{1,N}$ only depends on the white noise $\zeta$ in (\ref{ch3:eq:uu}), has mean zero and 
variance one, higher order moments which are bounded uniformly in $N$, and converges weakly to a 
standard normal random variable as $N\to\infty;$ ii) $F_N(\delta)$ depends on the data ${\yn}$ and $\by$-almost surely has finite moments of all positive orders uniformly in $N$ (where the expectation is taken 
with respect to  $\zeta$).
\end{lemma}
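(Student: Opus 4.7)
The plan is to isolate the chi-squared-like fluctuation from the white noise $\zeta$ in (\ref{ch3:eq:uu}), which is the only source of randomness at fixed data $\yn$ and fixed $\delta$. Expanding the squared norm gives
\begin{align*}
\tfrac{1}{2}\|\C_0^{-1/2}u^{(k)}_\delta\|_{\R^N}^2
&= \tfrac{1}{2}\|\C_0^{-1/2}m_{\lambda,\delta}(\yn)\|_{\R^N}^2 \\
&\quad + \langle \C_0^{-1/2}m_{\lambda,\delta}(\yn), \C_0^{-1/2}\C_{\lambda,\delta}^{1/2}\zeta\rangle_{\R^N}
+ \tfrac{1}{2}\langle \zeta, B\zeta\rangle_{\R^N},
\end{align*}
where $B:=\C_{\lambda,\delta}^{1/2}\C_0^{-1}\C_{\lambda,\delta}^{1/2}$. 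The three summands are, respectively, deterministic in $\zeta$, linear Gaussian in $\zeta$, and quadratic in $\zeta$.

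The algebraic heart of the argument is to rewrite $B$ using the precision identity (\ref{ch3:eq:prec}). Conjugating $\delta\C_0^{-1} = \C_{\lambda,\delta}^{-1} - \lambda K^\ast\C_1^{-1}K$ by $\C_{\lambda,\delta}^{1/2}$ from both sides gives $\delta B = I - G$ with $G := \lambda\C_{\lambda,\delta}^{1/2}K^\ast\C_1^{-1}K\C_{\lambda,\delta}^{1/2}$. The operator inequality $\C_{\lambda,\delta}\leq \delta^{-1}\C_0$ is immediate from (\ref{ch3:eq:prec}), and combining it with Assumption \ref{ch3:ass1}(ii) yields $\tr G \leq \delta^{-1}\lambda c_2$; since $G$ is positive semidefinite, $\tr G^k \leq (\tr G)^k$ is then also bounded uniformly in $N$ for every $k\geq 1$. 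Substituting $B=\delta^{-1}(I-G)$ and using $\|\zeta\|_{\R^N}^2\sim\chi^2_N$ gives the target decomposition
$$\tfrac{1}{2}\langle\zeta,B\zeta\rangle_{\R^N} = \delta^{-1}\tfrac{N}{2} + \delta^{-1}\sqrt{\tfrac{N}{2}}\,W_{1,N} - \tfrac{1}{2}\delta^{-1}\langle\zeta,G\zeta\rangle_{\R^N},$$
with the explicit choice $W_{1,N} := (\|\zeta\|_{\R^N}^2 - N)/\sqrt{2N}$, which manifestly depends only on $\zeta$.

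I would then verify the properties claimed for $W_{1,N}$. Writing $W_{1,N} = N^{-1/2}\sum_{j=1}^N(\zeta_j^2 - 1)/\sqrt{2}$ exhibits it as a normalized i.i.d.\ sum of sub-exponential mean-zero unit-variance variables, so the mean and variance claims are immediate, weak convergence to $\n(0,1)$ follows from the classical CLT, and Marcinkiewicz--Zygmund (or a direct moment computation for normalized $\chi^2$ variables) gives uniform-in-$N$ bounds on all higher moments. Collecting the remaining pieces into
$$F_N(\delta) := \bd + \tfrac{1}{2}\|\C_0^{-1/2}m_{\lambda,\delta}(\yn)\|_{\R^N}^2 + \langle \C_0^{-1/2}m_{\lambda,\delta}(\yn),\C_0^{-1/2}\C_{\lambda,\delta}^{1/2}\zeta\rangle_{\R^N} - \tfrac{1}{2}\delta^{-1}\langle\zeta,G\zeta\rangle_{\R^N},$$
I would bound its moments over $\zeta$, $\yn$-almost surely and uniformly in $N$, term by term: the first is constant; the second is bounded by $c_1^2/2$ by Assumption \ref{ch3:ass1}(i); the third is Gaussian in $\zeta$ with variance $\langle m_{\lambda,\delta},\C_0^{-1}\C_{\lambda,\delta}\C_0^{-1}m_{\lambda,\delta}\rangle_{\R^N} \leq \delta^{-1}c_1^2$ (by $\C_{\lambda,\delta}\leq\delta^{-1}\C_0$ together with Assumption \ref{ch3:ass1}(i)); and the fourth is a quadratic form in $\zeta$ whose moments are controlled, in the eigenbasis of $G$, by the bounded traces $\tr G^k$.

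The main technical point is the interplay between the two discrete-level assumptions: Assumption \ref{ch3:ass1}(ii) is what makes the chi-squared-like $W_{1,N}$ emerge cleanly by forcing the correction $G$ to be trace-bounded uniformly in $N$, while Assumption \ref{ch3:ass1}(i) controls the data-dependent terms through the Cameron--Martin-type norm of $m_{\lambda,\delta}(\yn)$. Both inputs funnel through the single operator inequality $\C_{\lambda,\delta}\leq\delta^{-1}\C_0$, and once the algebraic split $B=\delta^{-1}(I-G)$ is recognised, the remainder of the proof is essentially careful bookkeeping of quadratic-form moments.
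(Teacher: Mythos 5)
Your proposal is correct and follows essentially the same route as the paper: the same three-term expansion of the squared norm, the same use of the precision identity (\ref{ch3:eq:prec}) to write $\C_{\lambda,\delta}^{1/2}\C_0^{-1}\C_{\lambda,\delta}^{1/2}=\delta^{-1}(I-G)$ so that $\delta^{-1}\smnorm{\zeta}_{\R^N}^2$ produces the leading term and $W_{1,N}=(2N)^{-1/2}\sumn(\zeta_j^2-1)$, and the same collection of the remaining pieces into $F_N(\delta)$ with moments controlled via Assumptions \ref{ch3:ass1}(i)--(ii). The only (immaterial) differences are that you bound the moments of the quadratic correction through $\tr G^k\le(\tr G)^k$ in the eigenbasis of $G$ and treat the cross term as a one-dimensional Gaussian, where the paper instead invokes its Lemmas \ref{ch3:kollem} and \ref{ch3:sumlem}.
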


Combining with the scaling property of the gamma distribution as in the intuition described in subsection \ref{contr}, we show that as the dimension increases 
the $\delta$-chain makes increasingly smaller steps, and quantify the scaling of this slowing down. 
Indeed, we prove that %almost surely with respect to the data,
for large $N$ the $
\delta$-chain makes moves which on average are of order $N^{-1}$ with fluctuations of order $N^{-
\frac12}$. As a result, it takes $\mathcal{O}(N)$ steps for the $\delta$-chain to move by $\mathcal{O}(1)$. 
%This is reflected in the numerical simulations in Section \ref{ch3:sec:sim}%, where we observe that as the dimension $N$ grows, the $\delta$-chain slows down and in particular appears to behave like the solution to a SDE. 

\begin{theorem}\label{ch3:thm1}
Let $\lambda>0$ and consider Algorithm \ref{ch3:algstd} under Assumptions \ref{ch3:ass1}. In the limit $N\to\infty$,
we have almost surely with respect to $\by$  and where all the expectations are taken with respect to the randomness in the algorithm:\begin{enumerate}\item[i)]the expected step in the $\delta$-chain 
scales like $\frac{2}N$, that is, for any $\delta>0,$ 
\begin{align*}\frac{N}2\E\left[\de{k+1}_N-\de{k}_N|\de{k}_N=\delta\right]=(\ad+1)\delta-f_N(\delta;{\yn})
\delta^2+\mathcal{O}(N^{-\frac12}),\end{align*}
where $f_N(\delta;{\yn})$ is bounded uniformly in $N$.
In particular, if there exists $f(\delta;\by)\in\R$ such that $f_N(\delta;{\yn})\to f(\delta;\by)$, then \begin{align*}
\frac{N}2\E\left[\de{k+1}_N-\de{k}_N|\de{k}_N=\delta\right]=(\ad+1)\delta-f(\delta;\by)\delta^2+\smallO{1};
\end{align*}

\item[ii)]the variance of the step also scales like $\frac{2}N$ and in particular for any $\delta>0,$
\begin{align*}\frac{N}2\Var\left[\de{k+1}_N-\de{k}_N|\de{k}_N=\delta\right]=2\delta^2+\mathcal{O}(N^{-\frac12}).\end{align*}%\E&\left[\left(\de{k+1}_N-\de{k}_N-\E\left[\de{k+1}_N-\de{k}_N|\de{k}_N=\delta\right]
%\right)^2|\de{k}_N=\delta\right]=2\delta^2+\mathcal{O}(N^{-\frac12}).\end{align*}

\end{enumerate}

\end{theorem}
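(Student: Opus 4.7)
The plan is to reduce both claims to elementary Gamma moment formulas combined with the decomposition given by Lemma \ref{ch3:lem1}. Conditionally on the white noise $\zeta$ in (\ref{ch3:eq:uu}), the draw $\delta_N^{(k+1)}$ is a Gamma random variable with shape $\ad+N/2$ and rate $D(\zeta,\delta):=\bd+\tfrac12\|\C_0^{-\frac12}u_\delta^{(k)}\|_{\R^N}^2$, so
\[
\E\big[\delta_N^{(k+1)}\,\big|\,\delta,\zeta\big]=\frac{\ad+N/2}{D(\zeta,\delta)},\qquad \Var\big[\delta_N^{(k+1)}\,\big|\,\delta,\zeta\big]=\frac{\ad+N/2}{D(\zeta,\delta)^2}.
\]
Then the tower property gives the unconditional mean, while the law of total variance gives
\[
\Var\big[\delta_N^{(k+1)}\,\big|\,\delta\big]=\E\!\left[\frac{\ad+N/2}{D^2}\right]+\Var\!\left[\frac{\ad+N/2}{D}\right],
\]
where all outer moments are taken with respect to $\zeta$.

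Using Lemma \ref{ch3:lem1} I would write $D=\tfrac{N}{2\delta}\,(1+\varepsilon_N)$ with
\[
\varepsilon_N=\sqrt{\tfrac{2}{N}}\,W_{1,N}+\tfrac{2\delta}{N}F_N(\delta),
\]
so that $1/D=\tfrac{2\delta}{N}\sum_{j\ge 0}(-\varepsilon_N)^j$ and $1/D^2=\tfrac{4\delta^2}{N^2}\sum_{j\ge 0}(j+1)(-\varepsilon_N)^j$. Because $\E W_{1,N}=0$, $\E W_{1,N}^2=1$, and $F_N$ has moments bounded uniformly in $N$, termwise integration yields
\[
\E[\varepsilon_N]=\tfrac{2\delta}{N}\E F_N(\delta)=\bigO{N^{-1}},\qquad \E[\varepsilon_N^2]=\tfrac{2}{N}+\bigO{N^{-3/2}},\qquad \E[\varepsilon_N^p]=\bigO{N^{-p/2}}\ (p\ge 3),
\]
with analogous bounds for $\Var[\varepsilon_N]=2/N+\bigO{N^{-3/2}}$. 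Substituting these into the Neumann expansions, collecting orders, and multiplying by $N/2$ yields
\[
\tfrac{N}{2}\E\big[\delta_N^{(k+1)}-\delta\,\big|\,\delta\big]=(\ad+1)\delta-\delta^2\E F_N(\delta)+\bigO{N^{-1/2}},
\]
\[
\tfrac{N}{2}\Var\big[\delta_N^{(k+1)}\,\big|\,\delta\big]=\underbrace{\delta^2}_{\text{from }\E[1/D^2]}+\underbrace{\delta^2}_{\text{from }\Var[1/D]}+\bigO{N^{-1/2}}=2\delta^2+\bigO{N^{-1/2}},
\]
so identifying $f_N(\delta;\yn):=\E^\zeta F_N(\delta)$ (bounded uniformly in $N$ by Lemma \ref{ch3:lem1}) gives exactly the statements in (i) and (ii).

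The main technical obstacle is that the Neumann expansion of $1/D$ is only formal: $|\varepsilon_N|$ is not bounded by one pointwise, so one cannot literally integrate the geometric series term by term. To handle this I would split on the event $A_N=\{|\varepsilon_N|\le 1/2\}$ and its complement. On $A_N$ the partial sum has an explicit, easily-bounded geometric-series remainder. On $A_N^c$ I would use the crude bound $D\ge\bd$ (hence $1/D\le 1/\bd$ uniformly) together with $\rp(A_N^c)\le C_p N^{-p}$ for every $p$, which follows from Chebyshev/Markov applied to the uniform moment bounds of $W_{1,N}$ and $F_N$ supplied by Lemma \ref{ch3:lem1}. This truncation argument rigorously converts the formal expansion into honest $\bigO{N^{-1/2}}$ error bounds, after which the asymptotic identification of the drift and diffusion coefficients is immediate. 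The final sentence of (i) then follows by the dominated convergence argument hidden in $f_N\to f$.
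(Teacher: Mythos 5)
Your proposal is correct and reaches the same conclusions, but it takes a genuinely different route from the paper in two respects. First, you integrate out the gamma draw exactly, using the conditional mean $(\ad+\frac N2)/D$ and variance $(\ad+\frac N2)/D^2$ together with the tower property and the law of total variance; the paper instead keeps the gamma fluctuation alive as a normalized random variable $W_{2,N}$ (via the scaling identity $\Ga(\upalpha,\upbeta)\stackrel{\mathcal{L}}{=}\upbeta^{-1}\Ga(\upalpha,1)$ and Lemma \ref{ch3:lemgam}) and works with the distributional identity (\ref{ch3:eq:sp1}), a ratio whose numerator and denominator carry the gamma, Gaussian and data fluctuations separately. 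Your version is more economical for computing the two moments, but the paper's pathwise representation is what underpins Remark \ref{ch3:rem1}(i) and the Euler--Maruyama/diffusion-limit reading (\ref{ch3:eq:em})--(\ref{ch3:eq:dl}), so it buys more than the theorem itself. Second, to expand $1/D$ you use a full Neumann series truncated on the event $A_N=\{|\varepsilon_N|\le 1/2\}$, with Markov bounds giving $\rp(A_N^c)=\mathcal{O}(N^{-p})$ for all $p$; the paper avoids the convergence issue altogether by using the exact finite identities $\frac{1}{1+x}=1-x+\frac{x^2}{1+x}$ and $\frac{1}{(1+x)^2}=1-2x+\frac{3x^2+2x^3}{(1+x)^2}$, whose remainders still contain the denominator $D$ and are then controlled by the negative-moment bounds $\E^\zeta[D^{-2i}]=\mathcal{O}(N^{-2i})$ of Lemma \ref{ch3:lemdres} (proved via the non-central chi-squared moment generating function). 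Both are valid; the paper's remainder control is cleaner in that it needs no truncation event and works even when $\bd=0$, whereas your crude bound $1/D\le 1/\bd$ on $A_N^c$ requires $\bd>0$ (harmless for a proper $\Ga(\ad,\bd)$ prior, but if you wanted to cover $\bd=0$ you would have to replace it by a Cauchy--Schwarz step using exactly the negative-moment estimate of Lemma \ref{ch3:lemdres}, partially collapsing your argument back onto the paper's). Your identification $f_N(\delta;\yn)=\E^\zeta[F_N(\delta)]$ agrees with Remark \ref{ch3:rem1}(ii), and the final implication from $f_N\to f$ needs no dominated convergence, only the stated convergence itself.
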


\begin{remark}\label{ch3:rem1}{\ }
\begin{enumerate}
\item[i)]The proof of Theorem \ref{ch3:thm1} can be found in subsection \ref{ch3:ssec:dproof} in the Appendix. Equation (\ref{ch3:eq:sp1}) is a key identity, as it very clearly separates the three sources of fluctuation in the draw $\de{k+1}_N$, that is, the fluctuation in the Gaussian-draw $u|y,\delta$, the fluctuation in the gamma-draw $\delta|y,u$ and the fluctuation in the data.
\item[ii)]$f_N(\delta;{\yn}):=\E^\zeta[F_N(\delta;{\yn})]$, where $F_N$ is defined in the proof of Lemma 
\ref{ch3:lem1}. The assumption on the convergence of $f_N(\delta;{\yn})$ is trivially satisfied under 
Assumptions \ref{ch3:ass1}, if the discretization scheme used is such that if the vector $x\in\R^N$ and the 
$N\times N$ matrix $T$ are the discretizations at level $N$ of $\bx\in\X$ and the linear operator $\bT$ 
respectively, then $\norm{T x}_{\R^N}$ is a non-decreasing sequence. This is the case for example in 
spectral truncation methods, when $\bT$ is diagonalizable in the orthonormal basis used (see 
subsection \ref{sp}).
\end{enumerate}
\end{remark}

Theorem  \ref{ch3:thm1} suggests that 
\begin{align}\label{ch3:eq:em}
\de{k+1}_N-\de{k}_N\approx \frac2N\Big((\ad+1)\de{k}_N-f_N(\de{k}_N;y)(\de{k}_N)^2\Big)+\frac{2\de{k}_N}{\sqrt{N}}\Xi,
\end{align}
where $\Xi$ is a real random variable with mean zero and variance one. In the case where $f_N$ has a limit, the last expression looks like the Euler-Maruyama discretization of the stochastic differential equation
\begin{align}
\label{ch3:eq:dl}d\delta=\big(\ad+1-f(\delta;\by)\delta\big)\delta dt+\sqrt{2}\delta dW,\end{align} where 
$W=W(t)$ is a standard Brownian motion, with time step $\frac2N$. This is another manifestation of the fact that it takes $\mathcal{O}(N)$ steps for the $\delta$-chain to make a move of $\mathcal{O}(1)$ size.

Note that \eqref{ch3:eq:em} implies that the 
\emph{expected square jumping distance} of the Markov chain for
$\delta$ generated by CA is $\mathcal{O}(1/N)$. Recall (see
for example \cite{SR09} for a recent account) that this distance is
defined as $\E[(\de{k+1}_N-\de{k}_N)^2]$, where $\de{k}_N$ is drawn
from the stationary distribution. Hence,  it is the expected squared
step of the chain in stationarity. It is easy to check that it equals
$2 \Var (\de{k}_N) (1-Corr(\de{k}_N, \de{k+1}_N))$, where again all
quantities are computed in stationarity. Although the
expected square jumping distance is a sensible and practically useful measure of
efficiency of a Markov chain, there is no explicit result that links
it to the variance of Monte Carlo averages formed by using the output
of the chain. This variance will not only depend on autocorrelation at
other lags, but also on the function being averaged. Still, it gives a
rough idea: if the autocorrelation function associated to the identity
function is geometrically decaying, with lag-1 autocorrelation $\rho_N$, then the variance of the sample
average of $k_{max}$, $\de{k}_N$ values in stationarity will be $\Var
(\de{k}_N) (1+\rho_N)/\big((1-\rho_N)k_{max}\big)$. The point here is that $\rho_N$
behaves like $1-c/N$, for some $c$, but $\Var
(\de{k}_N)$ is $\mathcal{O}(1)$. Hence, the Monte Carlo error
associated with $k_{max}$ draws in stationarity is
$\mathcal{O}(\sqrt{N / k_{max}})$.

 %%%%%%%%EXAMPLES %%%%%%%%%
 
 \section{Examples satisfying our assumptions}\label{ch3:sec:ex}
We now present three families of linear inverse problems satisfying Assumptions \ref{ch3:infass1} on the underlying conitnuum model: a family of mildly ill-posed inverse problems, where the operators defining the problem are simultaneously diagonalizable, \cite{KVZ12}; a family of severely ill-posed inverse problems again in a diagonal setting, \cite{KVZ13, ASZ12}; and  a family of mildly ill-posed inverse problems in a nondiagonal setting, \cite{ALS13}. We expect that  Assumptions \ref{ch3:ass1}, will be satisfied by consistent discretizations of these models. 
Indeed, we show that our discrete level assumptions are satisfied if we discretize the two diagonal examples using spectral truncation (see subsection \ref{sp}). Furthermore, in section \ref{ch3:sec:sim} we provide numerical evidence that our ideas also apply in nondiagonal settings and when using other discretization schemes, in particular discretization via finite difference approximations (see subsection \ref{fd}). We do not prove that discretization via finite differences satisfies our discrete level assumptions as it is beyond the scope of this paper; we expect however this to be the case.

\subsection{Linear mildly ill-posed simultaneously diagonalizable inverse problem}\label{ch3:ssec:diag}

We consider the linear inverse problem setting of subsection \ref{sec:linear}, where $\bK,\bC_0$ and $\bC_1$ commute with each other and $\bK^\ast \bK, \bC_0$ and  $\bC_1$ are simultaneously diagonalizable with common complete orthonormal eigenbasis $\{\be_j\}_{j\in\N}$. Note that we do not assume that $\bK$ and $\bC_1$ are  compact, but we do assume that $\bK^\star \bK$ and $\bC_1$ are both diagonalizable in $\{\be_j\}_{j\in\N}$; in particular, we allow for $\bK$ and $\bC_1$ to be the identity. For any $\bw\in\X$, let $w_j:=\pr{\bw}{\be_j}$. 
Let $\bSigma$ be a positive definite and trace class operator in $\X$ which is diagonalizable in the orthonormal basis $\{\be_j\}_{j\in\N}$, with eigenvalues $\{\mu^\Sigma_j\}_{j\in\N}$. Then for any $\brho\in\X$, we can write a draw $\bx\sim\G(\rho,\bSigma)$ as  \begin{equation*}
\bx=\brho+\sumj\sqrt{\mu^\Sigma_j}\gamma_j\be_j,\end{equation*}
where $\gamma_j$ are independent standard normal random variables in $\R$; this is the Karhunen-Loeve expansion \cite[Chapter III.3]{RA90}. In fact, the Karhunen-Loeve expansion makes sense even if $\mu^\Sigma_j$ are not summable, that is if $\bSigma$ is not trace class in $\X$; the expansion then defines a Gaussian measure in a bigger space than $\X$ in which $\bSigma$ is trace class. This expansion suggests that since we are in a simultaneously diagonalizable setting we can use the Parseval identity and work entirely in the frequency domain as in subsection \ref{sp}. Indeed, we identify an element $\bw\in\X$ with the sequence of coefficients $\{w_j\}_{j\in\N}$, and the norm and inner product in $\X$ with the $\ell^2$-norm and inner product. Furthermore, we identify the operators $\bC_0, \bC_1$ and $\bK$ with the sequences of their eigenvalues $\{\mu^{\C_0}_j\}_{j\in\N}, \{\mu^{\C_1}_j\}_{j\in\N}$ and $\{\mu^{K}_j\}_{j\in\N}$ respectively. Algebraic operations on the operators $\bC_0, \bC_1, \bK$ are defined through the corresponding operations on the respective sequences. 

We make the following assumptions on the spectral decay of $\bK, \bC_0$ and $\bC_1$:
\begin{assumptions}\label{ch3:decass}
The eigenvalues  of $\bK^\ast \bK, \bC_0$ and $\bC_1$, denoted by $(\mu^{K}_j)^2, \mu^{\C_0}_j, \mu^{\C_1}_j$, respectively, satisfy \addtocounter{footnote}{0}\footnote{$\alpha,\beta$ not to be confused with $\upalpha, \upbeta$ used respectively as shape and rate parameters of the gamma distribution.}\addtocounter{footnote}{-1}\begin{enumerate}
\item[-]$(\mu^{K}_j)^2\asymp j^{-4\ell}, \;\ell\geq0;$
\item[-]$\mu^{\C_0}_j\asymp j^{-2\alp}, \;\alp>\frac12$$;$
\item[-]$\mu^{\C_1}_j\asymp j^{-2\bet}, \;\bet\geq0$.
\end{enumerate}
\end{assumptions}

Let $\nu$ be the joint distribution of $\by$ and $\bu$, where $\bu|\delta\sim\G(0,\delta^{-1}\bC_0)$ and $\by|\bu,\delta\sim\G(\bK \bu,\lambda^{-1}\bC_1)$.
Then in this diagonal case, it is straightforward to show in the infinite-dimensional setting that the conditional posterior $\bu|\by,\delta$ is $\nu$-almost surely Gaussian, $\G(\bme_{\lambda,\delta}(\by),\bC_{\lambda,\delta})$, where $\bC_{\lambda,\delta}$ and $\bme
_{\lambda,\delta}(\by)$  satisfy (\ref{ch3:eq:prec}) and (\ref{ch3:eq:mean}) respectively. We make the following additional assumption:

\begin{assumption}\label{ch3:dc:ass}
The parameters $\alp,\bet, \ell$ in Assumptions \ref{ch3:decass} satisfy $2\alp+4\ell-2\bet>1$.
\end{assumption}

We show that under Assumptions \ref{ch3:decass} and \ref{ch3:dc:ass}, Assumptions \ref{ch3:infass1} on the underlying infinite-dimensional model are satisfied $\nu$-almost surely. Without loss of generality assume $\delta=\lambda=1$. For Assumption \ref{ch3:infass1}(i), we have using the Karhunen-Loeve expansion and Assumption \ref{ch3:decass},\begin{align*}
\E^\nu\norm{\bC_0^{-\frac12}\bme(\by)}^2\leq c\E^\nu\sumj\frac{j^{2\alp-4\ell+4b}}{(j^{-4\ell+2\bet}+j^{2\alp})^2}(j^{-2\ell-\alp}\zeta_j+j^{-\bet}\xi_j)^2,
\end{align*}
where $\{\zeta_j\}_{j\in\N}, \{\xi_j\}_{j\in\N}$ are two independent sequences of independent standard normal random variables. The assumption $2\alp+4\ell-2\bet>1$ secures that the right hand side is finite, hence $\bme(\by)\in\D(\bC_0^{-\frac12}) 
\;\nu$-almost surely. For Assumption \ref{ch3:infass1}(ii),  the operator $\bC_1^{-\frac12}\bK\bC_0 \bK^\ast\bC_1^{-\frac12}$ has eigenvalues that decay like $j^{-2\alp-4\ell+2\bet}$ and hence are summable by Assumption \ref{ch3:dc:ass}.

We define the Sobolev-like spaces $\mathcal{H}^t, t\in\R$: for $t\geq0$, define \begin{equation*}\mathcal{H}^t:=\{\bu\in\X: \norm{\bu}_{\mathcal{H}^t}:=\sumj j^{2t}\pr{u_j}{\be_j}^2<\infty\},\end{equation*} and for $t<0$, $\mathcal{H}^{t}:=(\mathcal{H}^{-t})^\ast$. We assume to have data of the following form:
\begin{assumption}\label{ch3:ass22} $\by=\bK{\bu^\dagger}+\hl^{-\frac12}\bC_1^{\frac12}\bxi$, where ${\bu^\dagger}\in \mathcal{H}^{\bet-2\ell}$ is the underlying true solution and $\bxi$ is a Gaussian white noise, $\xi\sim\G(0,I)$. \end{assumption}

Note that under Assumptions \ref{ch3:decass}, \ref{ch3:dc:ass} and \ref{ch3:ass22}, it is straightforward to check that Assumption \ref{ch3:infass1}(i) is also satisfied $\bxi$-almost surely. Indeed, using the Karhunen-Loeve expansion we have\begin{align*}\E\norm{\bC_0^{-\frac12}\bme(\by)}^2\leq c\E\sumj\frac{j^{2\alp-4\ell+4b}}{(j^{-4\ell+2\bet}+j^{2\alp})^2}(j^{-2\ell}(u^\dagger_j)^2+\hl^{-\frac12}j^{-\bet}\xi_j)^2,\end{align*} where $\{\xi_j\}_{j\in\N}$ is a sequence of independent standard normal random variables. The assumption $2\alp+4\ell-2\bet>1$ together with ${\bu^\dagger}\in \mathcal{H}^{\bet-2\ell}$ secure that the right hand side is finite. Assumption \ref{ch3:infass1}(ii) is independent of $\by$, hence also holds by our previous considerations.

A natural way to discretize random draws in this setup is by truncating the Karhunen-Loeve expansion which is equivalent to the spectral truncation in subsection \ref{sp}. We assume to have discrete data of the form \begin{equation*}{\yn}=K{u^\dagger}+\hl^{-\frac12}\C_1^\frac12\xi,\end{equation*}
where $K, \C_1, {u^\dagger}$ and  $\xi$ are discretized as in subsection \ref{sp}. The prior is also discretized using spectral truncation, $u\sim\G(0,\C_0)$. 
We show that Assumptions \ref{ch3:ass1} are satisfied under Assumptions \ref{ch3:decass} and \ref{ch3:dc:ass}, for this data and discretization scheme.  

By Assumption \ref{ch3:decass}, there exists a constant $c\geq0$ independent of $N$, such that
\begin{align*}\E\norm{\C_0^{-\frac12}m({\yn})}^2_{\R^N}\leq c\E\sumn\frac{j^{2\alp-4\ell+4b}}{(j^{-4\ell+2\bet}+j^{2\alp})^2}(j^{-2\ell}{u^\dagger_j}+j^{-\bet}\xi_j)^2,
\end{align*}
where the right hand side is bounded uniformly in $N$, since we are summing nonnegative numbers and we have seen that under Assumptions \ref{ch3:dc:ass} and \ref{ch3:ass22} the corresponding infinite series is summable. 
Furthermore, again by Assumption \ref{ch3:decass}, there exists another constant $c\geq0$ independent of $N$, such that  \begin{align*}\tr(\C_1^{-\frac12}K\C_0K^\ast\C_1^{-\frac12})\leq c\sumn j^{-2\alp-4\ell+2\bet},\end{align*} where the right hand side is bounded uniformly in $N$, since we have seen that under Assumption \ref{ch3:dc:ass} the corresponding infinite series is summable. 

\subsection{Linear severely ill-posed simultaneously diagonalizable inverse problem}\label{ch3:ssec:sev}
We consider the setting of \cite{KVZ13, ASZ12}, that is, a similar situation with the previous example, where instead of having $(\mu_j^{K})^2\asymp j^{-4\ell}$  we now have $(\mu_j^{K})^2\asymp e^{-2sj^b},$ for $b,s>0$. The proof of the validity of Assumptions \ref{ch3:infass1} $\nu$-almost surely is identical to the proof in the previous example, where we now have the added advantage of the exponential decay of the eigenvalues of $\bK^\ast \bK$. 
We can also prove that for data of the form $\by=K{\bu^\dagger}+\lambda^{-\frac12}\bC_1^{\frac12}\bxi$, where now it suffices to have ${\bu^\dagger}\in\X$, Assumption \ref{ch3:infass1} is satisfied $\bxi$-almost surely. Finally, in a similar way to the previous example, Assumptions \ref{ch3:ass1} are valid if we discretize this setup by spectral truncation (subsection \ref{sp}). 

\subsection{Nondiagonal linear inverse problem}
We consider the setting of \cite{ALS13}, that is the linear inverse problem setting of subsection \ref{sec:linear}, where $\bK^\ast \bK, \bC_0$ and $\bC_1$ are not necessarily simultaneously diagonalizable but they are related to each other via a range of norm equivalence assumptions expressing that $\bK\simeq \bC_0^\ell$ and $\bC_1\simeq \bC_0^\beta$ for some $\ell, \beta\geq0$ (see \cite[Assumption 3.1]{ALS13}). Here $\simeq$ is used loosely to indicate two operators which induce equivalent norms. As before let $\nu$ be the joint distribution of $\by$ and $\bu$, where $\bu|\delta\sim\G(0,\delta^{-1}\bC_0)$ and $\by|\bu,\delta\sim\G(\bK \bu,\lambda^{-1}\bC_1)$. Then as in the simultaneously diagonalizable case examined above, we have that the conditional posterior $\bu|\by,\delta$ is $\nu$-almost surely  $\G(\bme_{\lambda,\delta}(\by),\bC_{\lambda,\delta})$, where $\bC_{\lambda,\delta}$ and $\bme
_{\lambda,\delta}(\by)$ satisfy (\ref{ch3:eq:prec}) and (\ref{ch3:eq:mean}) respectively (see \cite[Theorem 2.1]{ALS13}). It is implicit in \cite[Theorem 2.1]{ALS13} that $\bme_{\lambda,\delta}(\by)\in\D(\bC_0^{-\frac12})$ $\nu$-almost surely, hence Assumption \ref{ch3:infass1}(i) holds $\nu$-almost surely. Assumption \ref{ch3:infass1}(ii) also holds $\nu$-almost surely since if $\{\phi_j\}_{j\in\N}$ is a complete orthonormal system of eigenfunctions of $\bC_0$ and $\{\mu^{\C_0}_j\}_{j\in\N}$ the corresponding eigenvalues, by \cite[Assumption 3.1(3)]{ALS13} we have $\norm{\bC_1^{-\frac12}\bK\bC_0^\frac12\phi_j}^2\leq c\norm{\bC_0^{-\frac{\bet}2+\ell+\frac12}\phi_j}^2=c(\mu^{\C_0}_j)^{-\bet+2\ell+1}$ which is summable by \cite[Assumption 3.1(1) and 3.1(2)]{ALS13}. Hence, $\bC_1^{-\frac12}\bK\bC_0^\frac12$ is Hilbert-Schmidt thus $\bC_1^{-\frac12}\bK\bC_0\bK^\ast\bC_1^{-\frac12}$ is trace-class. We believe that Assumptions \ref{ch3:ass1} on the discrete level are also satisfied in this example if consistent discretization methods are used, however proving this is beyond the scope of the present paper.

%%%%%%%NUMERICAL RESULTS%%%%%%%
\section{Numerical Results}\label{ch3:sec:sim}
We now present numerical simulations supporting our result in section \ref{ch3:sec:main} on the large $N$ behaviour of CA described in subsection \ref{CA} and our intuition contained in subsection \ref{contr} on the benefits of the reparametrization described in subsection \ref{NCA}. We consider an instance and a modification of the mildly ill-posed diagonal setting presented in subsection \ref{ch3:ssec:diag}. In subsection \ref{ch3:nex1} we use spectral truncation (see subsections \ref{sp}, \ref{ch3:ssec:diag}) and in subsection \ref{ch3:nex2} we use finite differences approximation (see subsection \ref{fd}).

%%%%% EXAMPLE 1%%%%%%

\subsection{Signal in white noise model using truncated Karhunen-Loeve expansion}\label{ch3:nex1}
We consider the simultaneously diagonalizable setup described in subsection \ref{ch3:ssec:diag}, where $\X=L^2(\I), \I=(0,1)$. We consider the orthonormal basis $\be_j(x)=\sqrt{2}\sin(j\pi x), \;x\in \I$, and define the operators $\bK, \bC_0$ and $\bC_1$  directly through their eigenvalues $\mu^K_j=1, \mu^{\C_0}_j=j^{-3}$ and $\mu^{\C_1}_j=1,$ for all $j\in\N$, respectively. In particular, this is the \emph{normal mean model}, in which one assumes observations of the form \[y_i=u_i+\eta_j, \quad j\in\N,\] where $\eta_j\sim\G(0,\lambda^{-1})$ and the unknown is $\{u_j\}_{j\in\N}\in\ell^2$. This model is clearly equivalent to the \emph{white noise model}, \begin{align}\label{wn}\by=\bu+\bmeta,\end{align} where $\bmeta=\lambda^{-\frac12}\bm{\xi}$ and $\bm{\xi}$ is an unobserved Gaussian white noise, see subsection \ref{sp}. Note that $\bm{\xi}$ whose covariance function is a Dirac delta function, is not realizable in the basic Hilbert space $\X$ (instead $\X$ is the corresponding Cameron-Martin space), but can be interpreted in process form as for example in \cite{BHMR07, LC08} in the context of inverse problems. 
Although it can be argued that white noise data models are unrealistic at the
very smallest scales, they are a useful idealization of noise which is best thought
of as a continuous process with very short correlation lengthscales; in particular if
the correlation lengthscale is much smaller than the grid scale used, then it
is reasonable to use a white noise model.
The white noise model (\ref{wn}) is an important statistical model which is known to be asymptotically equivalent to several standard statistical models, for example nonparametric regression, \cite{BL96, LZ00}. It is also practically relevant, since it is a nontrivial special case of the deconvolution inverse problem, \cite{JKPR04, KR13}. Finally, it gives rise to Gaussian posterior distributions which are well studied in the sense of posterior consistency, see \cite{KVZ12, ALS13, KR13}.

 Defining $\bA_0$ to be the negative Laplace operator in $\I$ with Dirichlet boundary conditions, we recognize that we use a Gaussian prior with covariance operator $\bC_0$ proportional to $\bA_0^{-\frac{3}2}$.
Assumptions \ref{ch3:decass} are satisfied with $\alp=1.5$ and $\bet=\ell=0$; since $2\alp+4\ell-2\bet=3>1$, Assumption \ref{ch3:dc:ass} is also satisfied. We assume that we have data produced from the underlying true signal ${\bu^\dagger}(x)=\sumj {u^\dagger_j} \sqrt{2}\sin(j\pi x),$ for $x\in\I$, where ${u^\dagger_j}=j^{-2.25}\sin(10j)$ and $\hl=200$, and in particular we have that the coefficients of $\by$, are given as \begin{equation*}y_j={u^\dagger_j}+\hl^{-\frac12}\xi_j,\end{equation*} where $\xi_j$ are standard normal random variables. It is straightforward to check that ${\bu^\dagger}\in \mathcal{H}^{t}$ for any $t<1.75$, hence Assumption \ref{ch3:ass22} is also satisfied. According to the considerations in subsection \ref{ch3:ssec:diag}, we thus have that Assumptions \ref{ch3:ass1} hold when using the spectral truncation discretization method. This example is studied in \cite{SVZ13} where the interest is in studying the asymptotic performance of the posterior in the small noise limit (see section \ref{ch3:sec:con}). %We use the hierarchical setup presented in Section \ref{ch3:sec:int} and in particular implement Algorithm \ref{ch3:algstd} and the reparametrized Algorithm \ref{ch3:algrep}. 

We use the hierarchical setup presented in subsection \ref{sec:linear} and implement Algorithms \ref{ch3:algstd} (CA),  \ref{ch3:algrep} (NCA) and \ref{ch3:algmar} (MA) contained in section \ref{ch3:sec:met} at discretization levels $N=32, 512, 8192$, with hyper-parameters $\ad=1,\bd=10^{-4},$ chosen to give uninformative hyper-priors, that is, hyper-priors whose variance is much larger than their mean. Following the discussion in subsection \ref{contr}, we view MA as the gold standard and benchmark CA and NCA against it. We use $10^4$ iterations and choose $\de{0}=1$ in all cases.  In order to have fair comparisons, we use a fixed burn-in time of $10^3$ iterations. We take the viewpoint that we have a fixed computational budget, hence we choose not to increase the burn-in time as $N$ increases as one can do if infinite resources are available. 

In Figure \ref{ch3:fig1} we plot the true solution, the noisy data and the sample means and credibility bounds using CA and NCA for $N=8192$. The sample means and credibility bounds at other discretization levels of the unknown are similar and are therefore omitted.

\begin{figure}[htp]
            \includegraphics[type=pdf, ext=.pdf, read=.pdf, width=0.32\columnwidth]{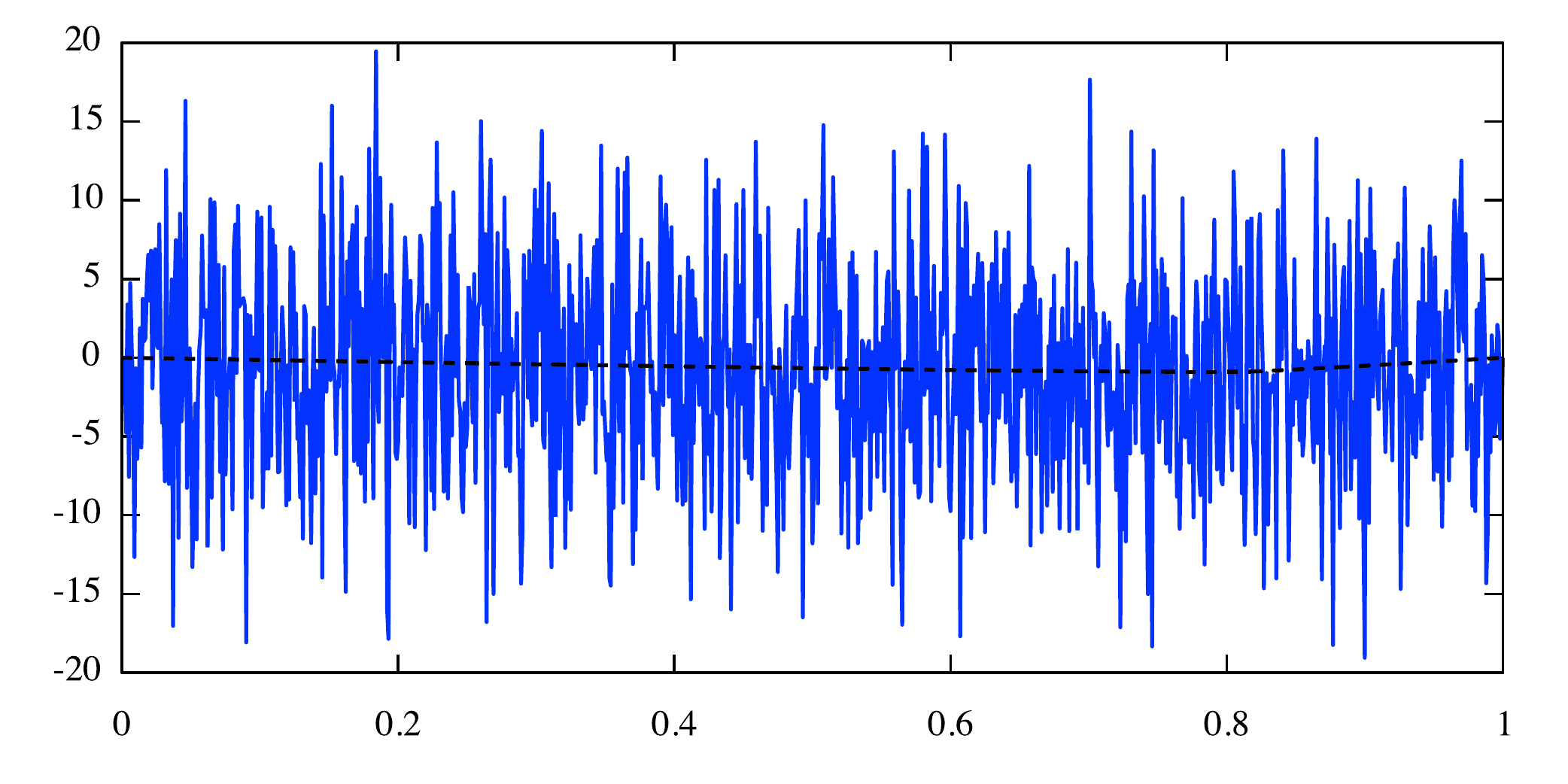} 
            \includegraphics[type=pdf, ext=.pdf, read=.pdf, width=0.32\columnwidth]{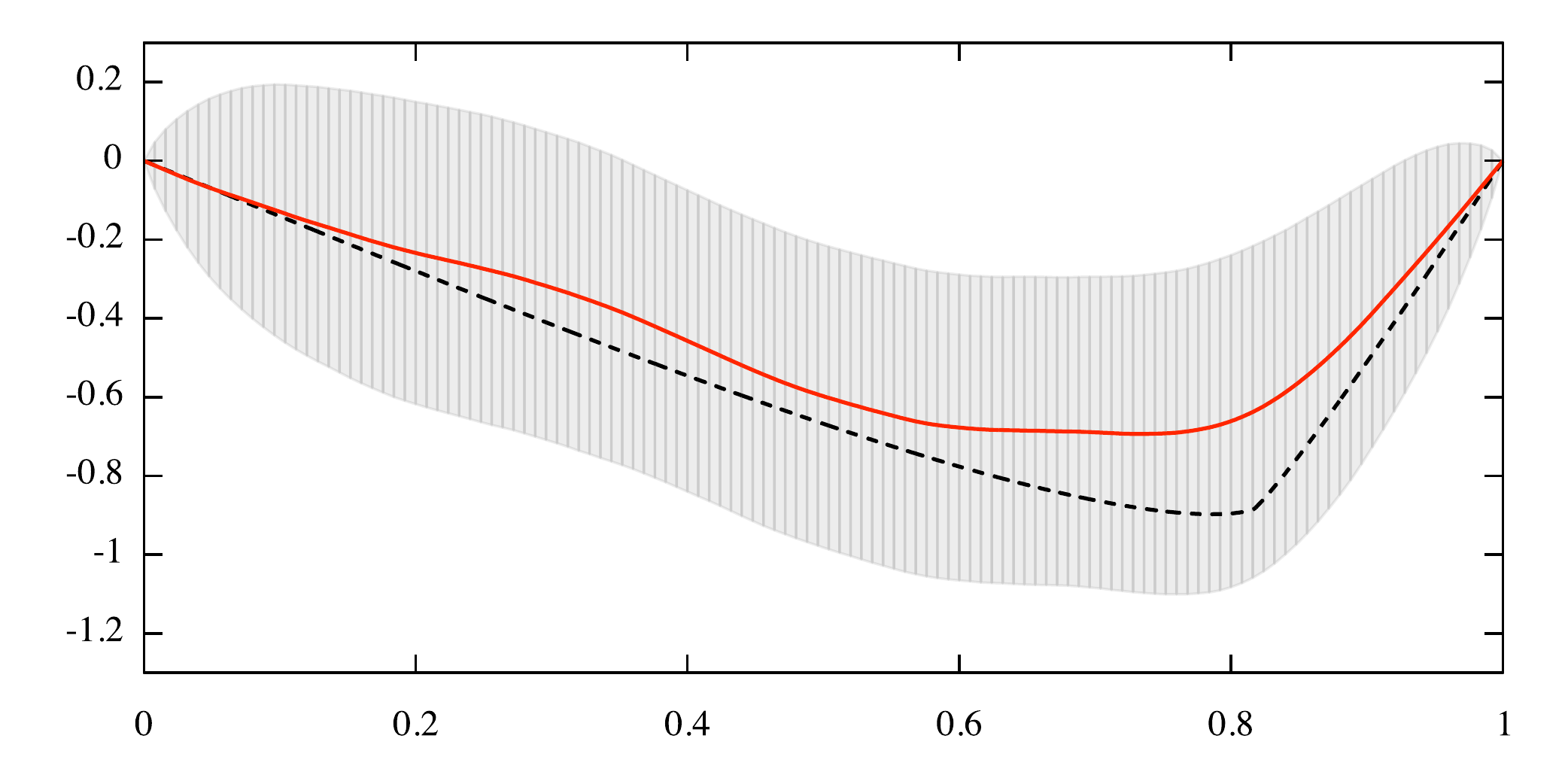}
            \includegraphics[type=pdf, ext=.pdf, read=.pdf, width=0.32\columnwidth]{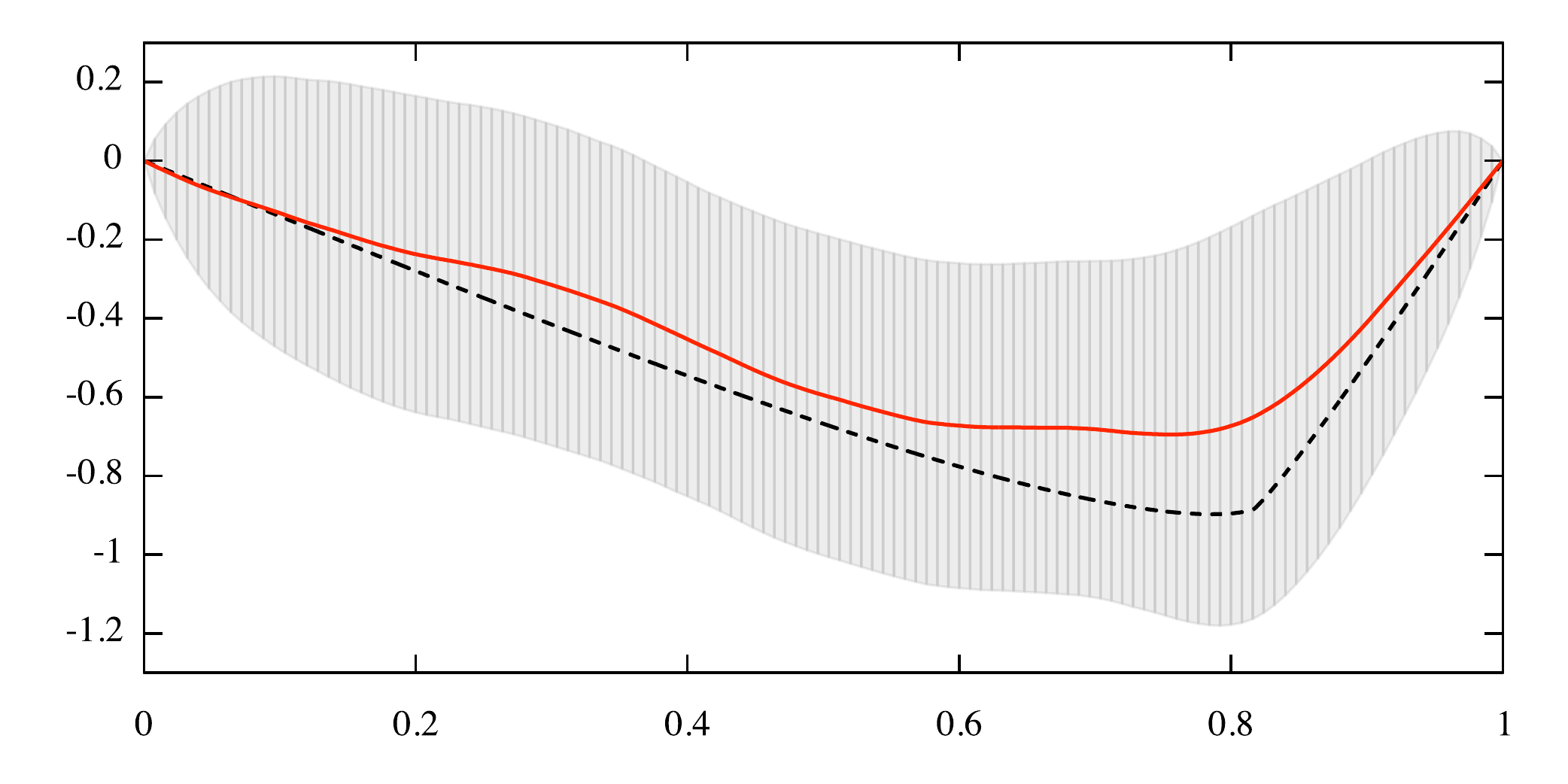}
            \caption{Left: true solution (dashed black) and noisy data (blue continuous). Middle and right: true solution (dashed black), sample mean (red continuous) and 87.5$\%$ credibility bounds (shaded area) for CA (middle) and NCA (right). Dimension is $N=8192$.}   
            \label{ch3:fig1}         
\end{figure}

 \begin{figure}[htp]
           \center{   
            \includegraphics[type=pdf, ext=.pdf, read=.pdf, width=0.32\columnwidth]{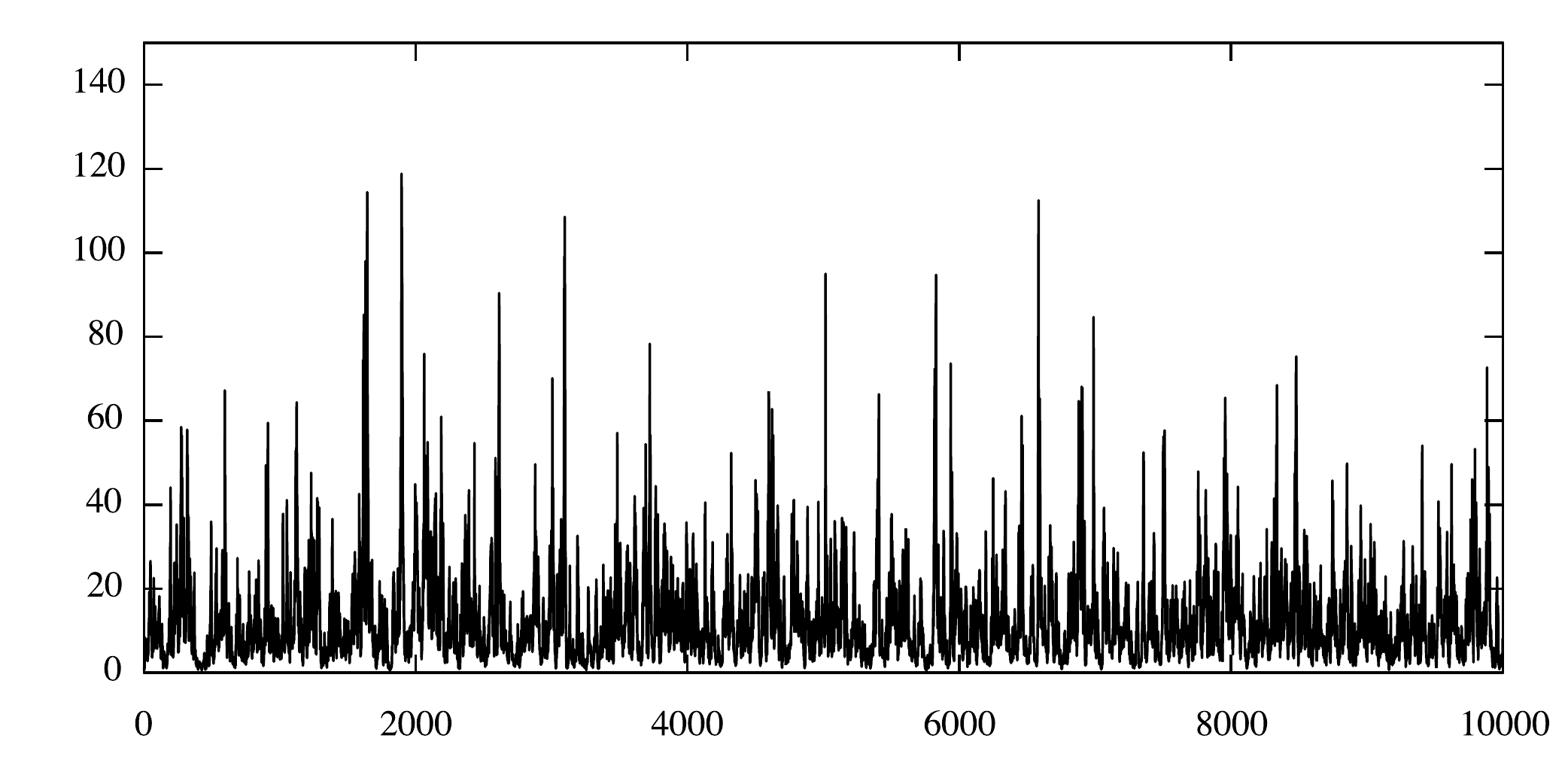}   
             \includegraphics[type=pdf, ext=.pdf, read=.pdf, width=0.32\columnwidth]{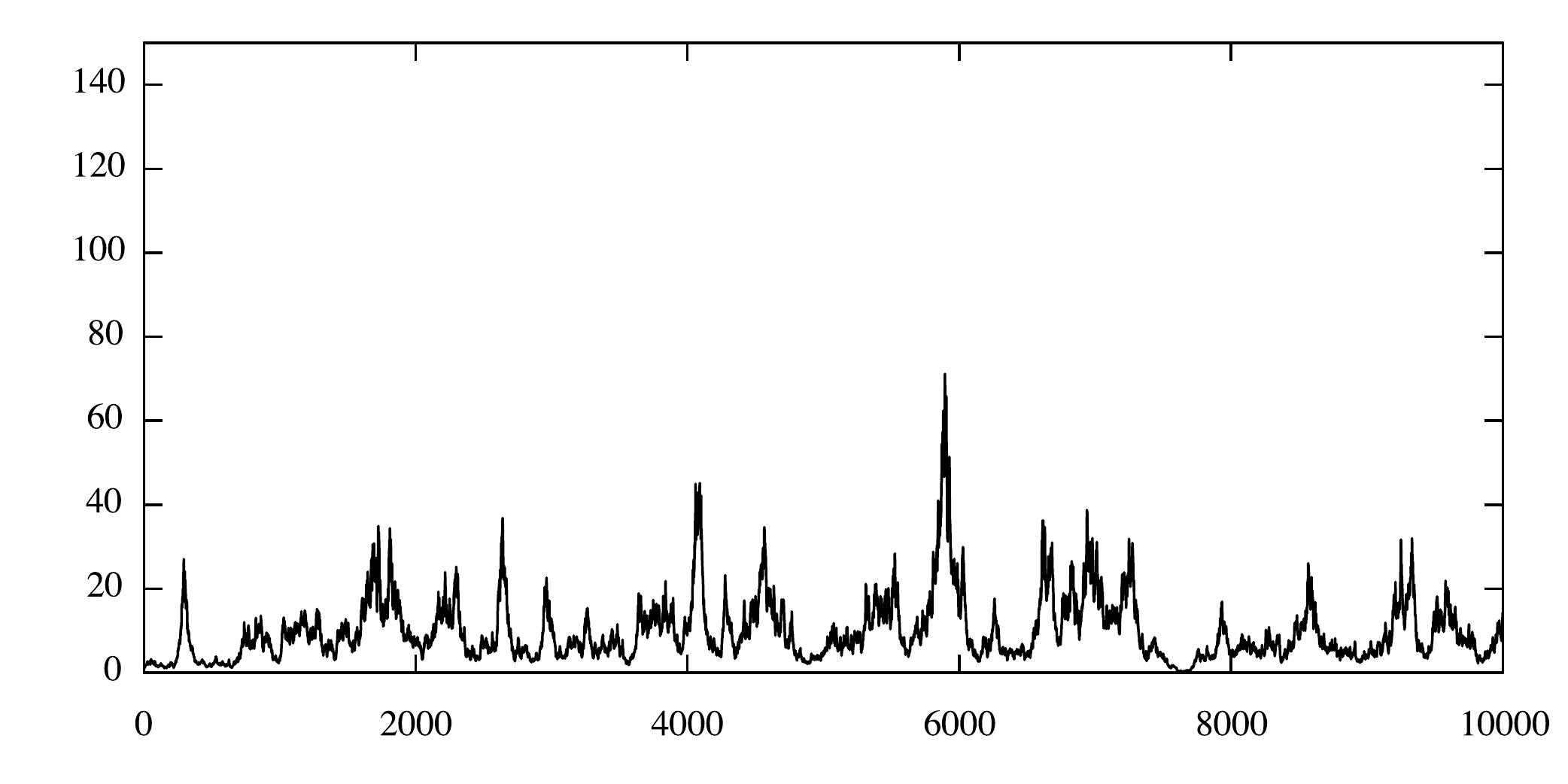}  
             \includegraphics[type=pdf, ext=.pdf, read=.pdf, width=0.32\columnwidth]{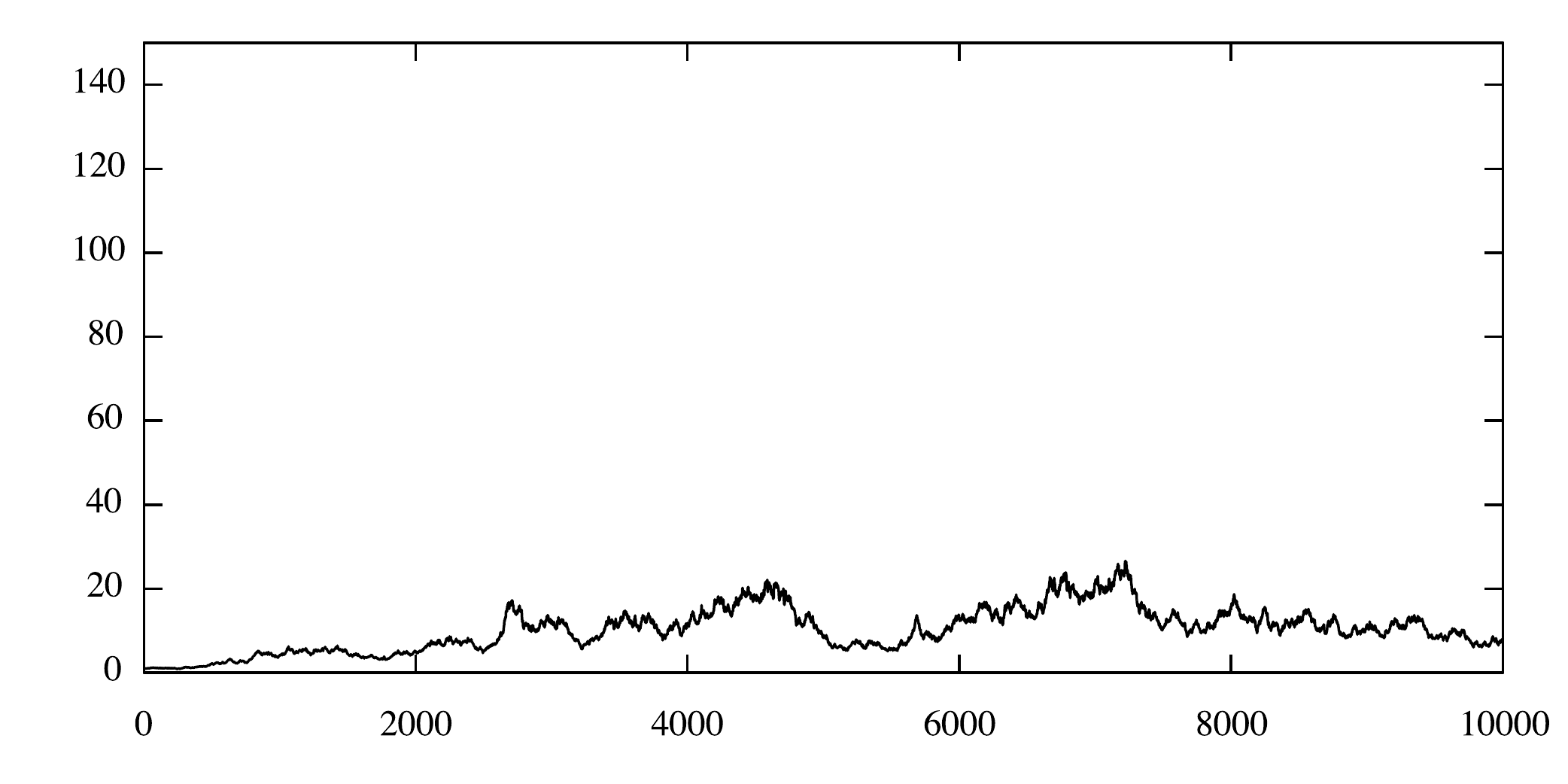}       
                   
            \includegraphics[type=pdf, ext=.pdf, read=.pdf, width=0.32\columnwidth]{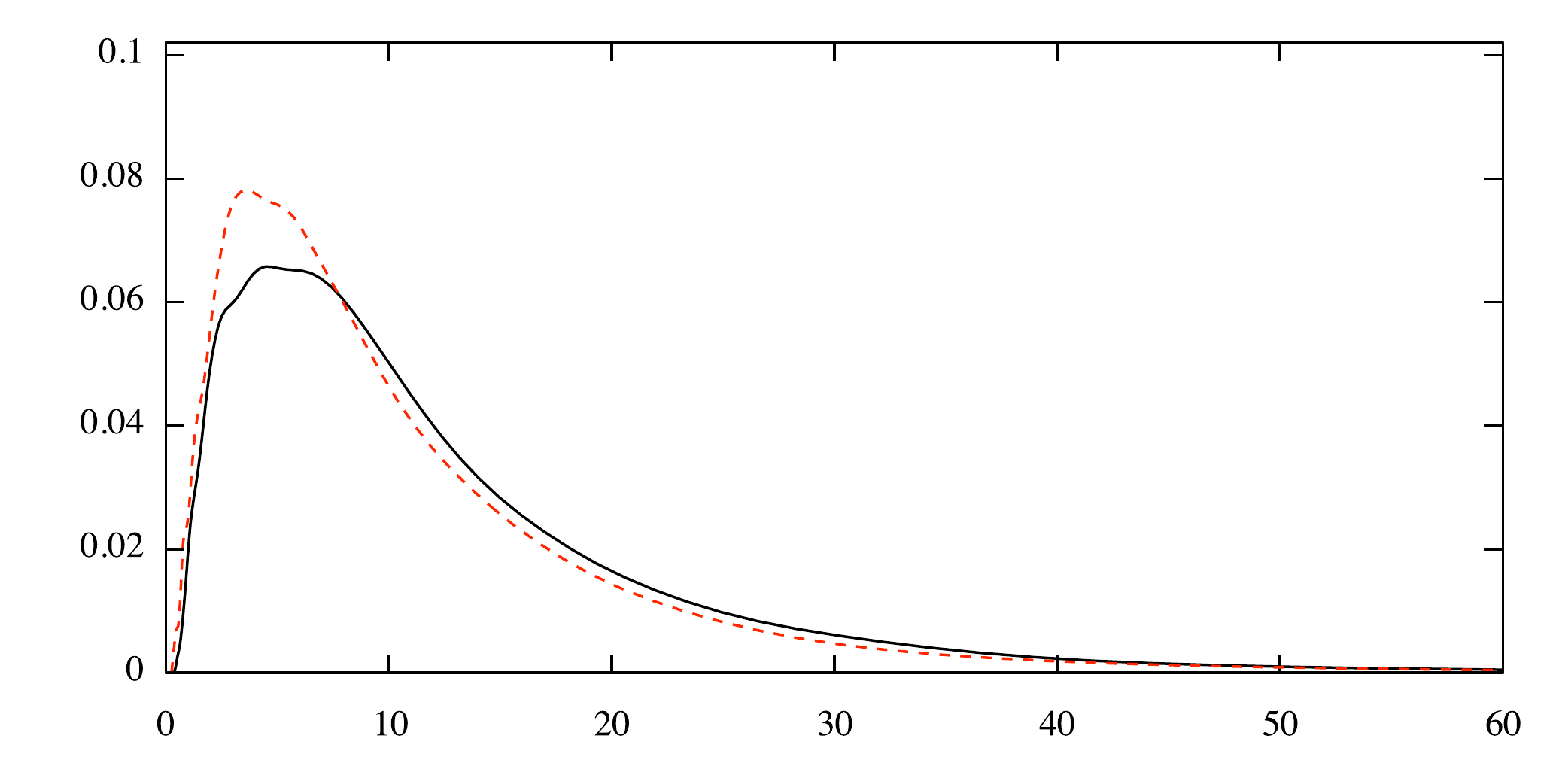}       \includegraphics[type=pdf, ext=.pdf, read=.pdf, width=0.32\columnwidth]{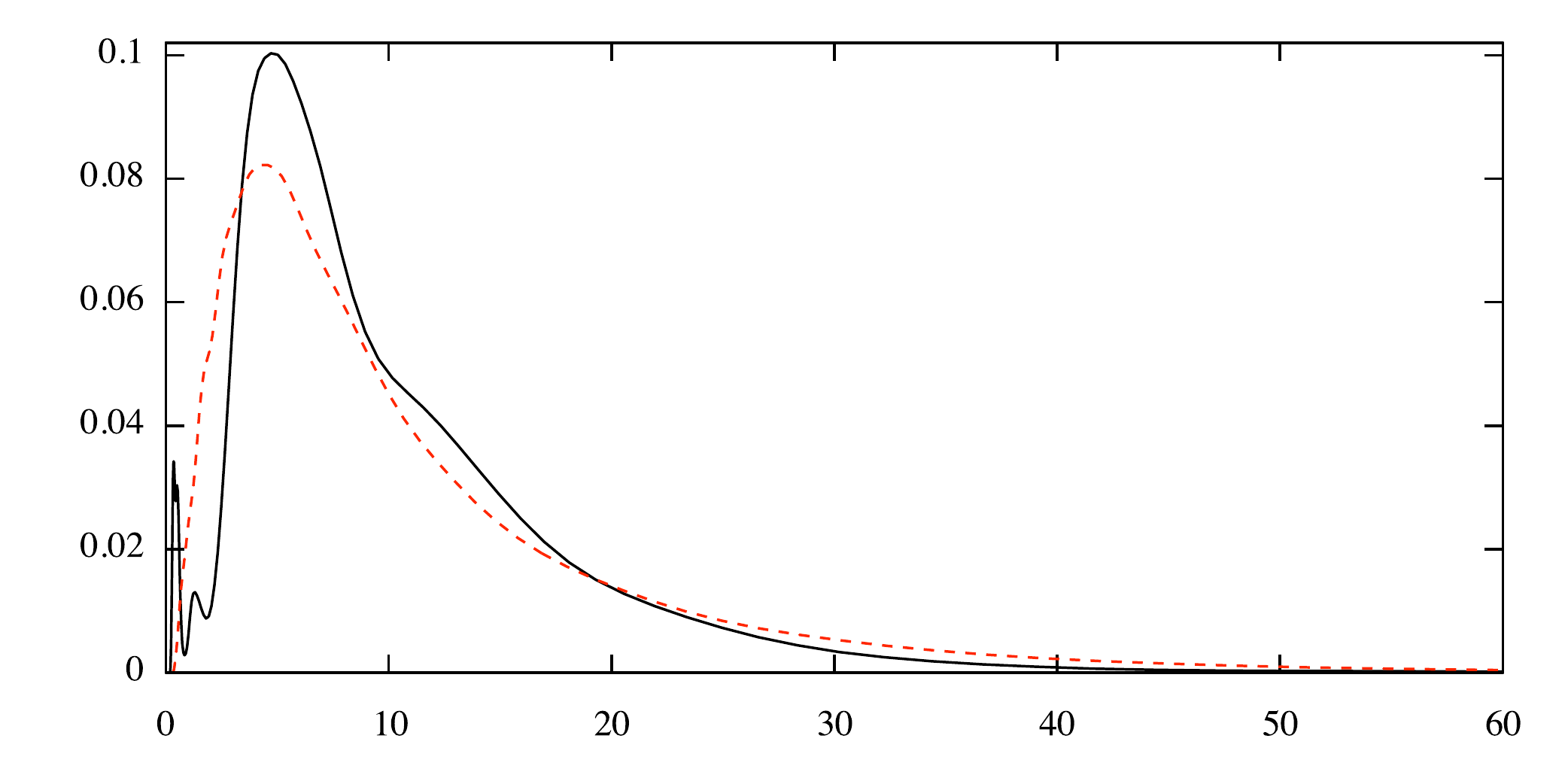}              
            \includegraphics[type=pdf, ext=.pdf, read=.pdf, width=0.32\columnwidth]{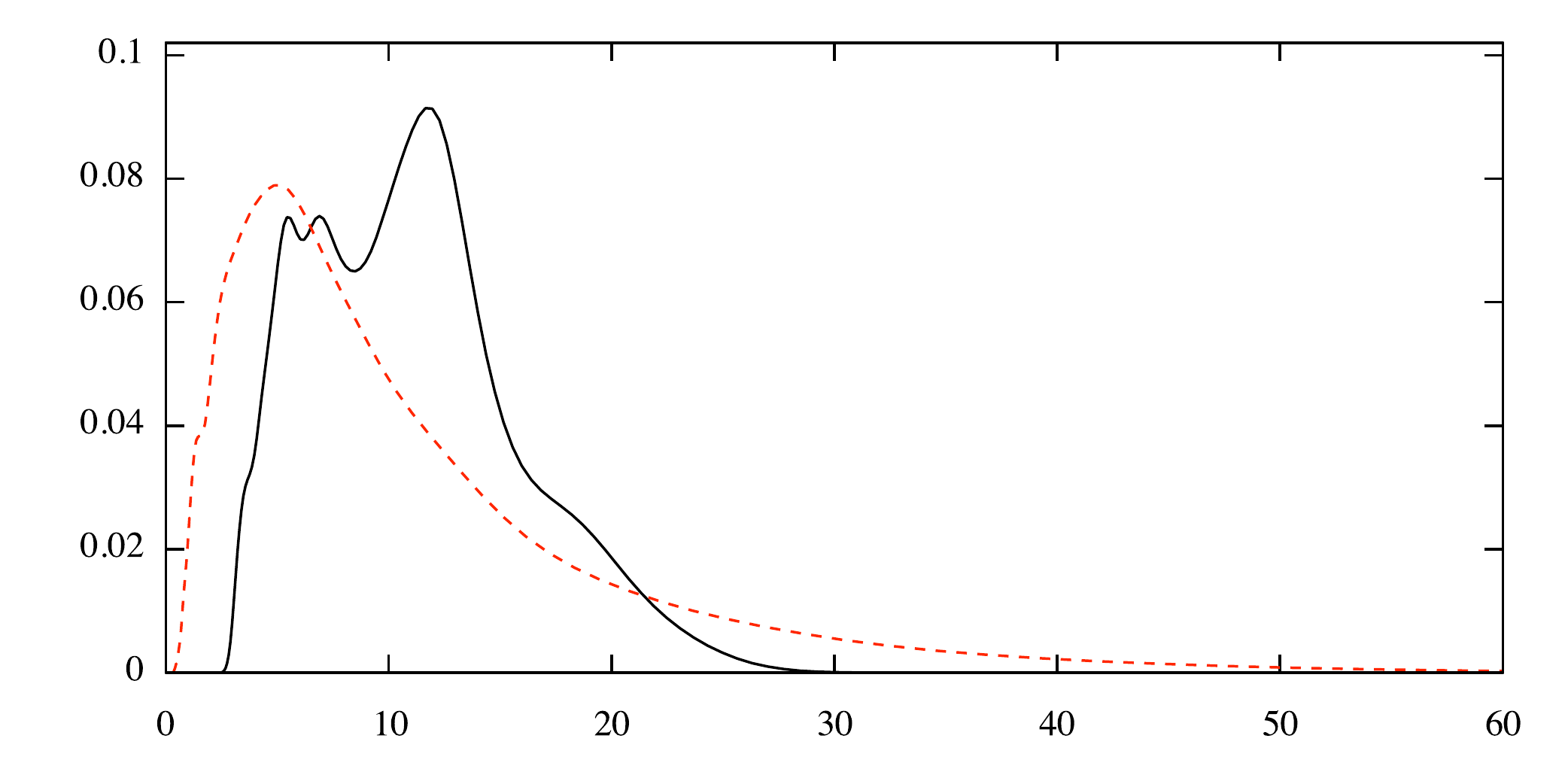}} 
            \caption{CA: $\delta$-chains (top)  and kernel density estimates of the posterior on $\delta$ (bottom) for dimensions $N=32,  512$ and $8192$ left to right. In dashed red in the density plots is the density estimate using MA, considered as the gold standard.} 
            \label{ch3:fig2}
\end{figure}

In Figure \ref{ch3:fig2} we see that for CA, in small dimensions the $\delta$-chain has a healthy mixing, however as predicted by Theorem \ref{ch3:thm1}, as $N$ increases it becomes increasingly slower and exhibits diffusive behaviour. This is also reflected in the density plots where we observe that as $N$ increases, the kernel density estimates computed using CA look less and less like the density estimates computed using MA which we consider to be optimal in this setting. In Figure \ref{ch3:fig3} we see that for NCA as expected, the $\delta$-chain appears to be robust with respect to the increase in dimension; this is also reflected in the density estimates using NCA which now look very close to the ones obtained using MA for all discretization levels.

 \begin{figure}[htp]
            \center{   
                     
            \includegraphics[type=pdf, ext=.pdf, read=.pdf, width=0.32\columnwidth]{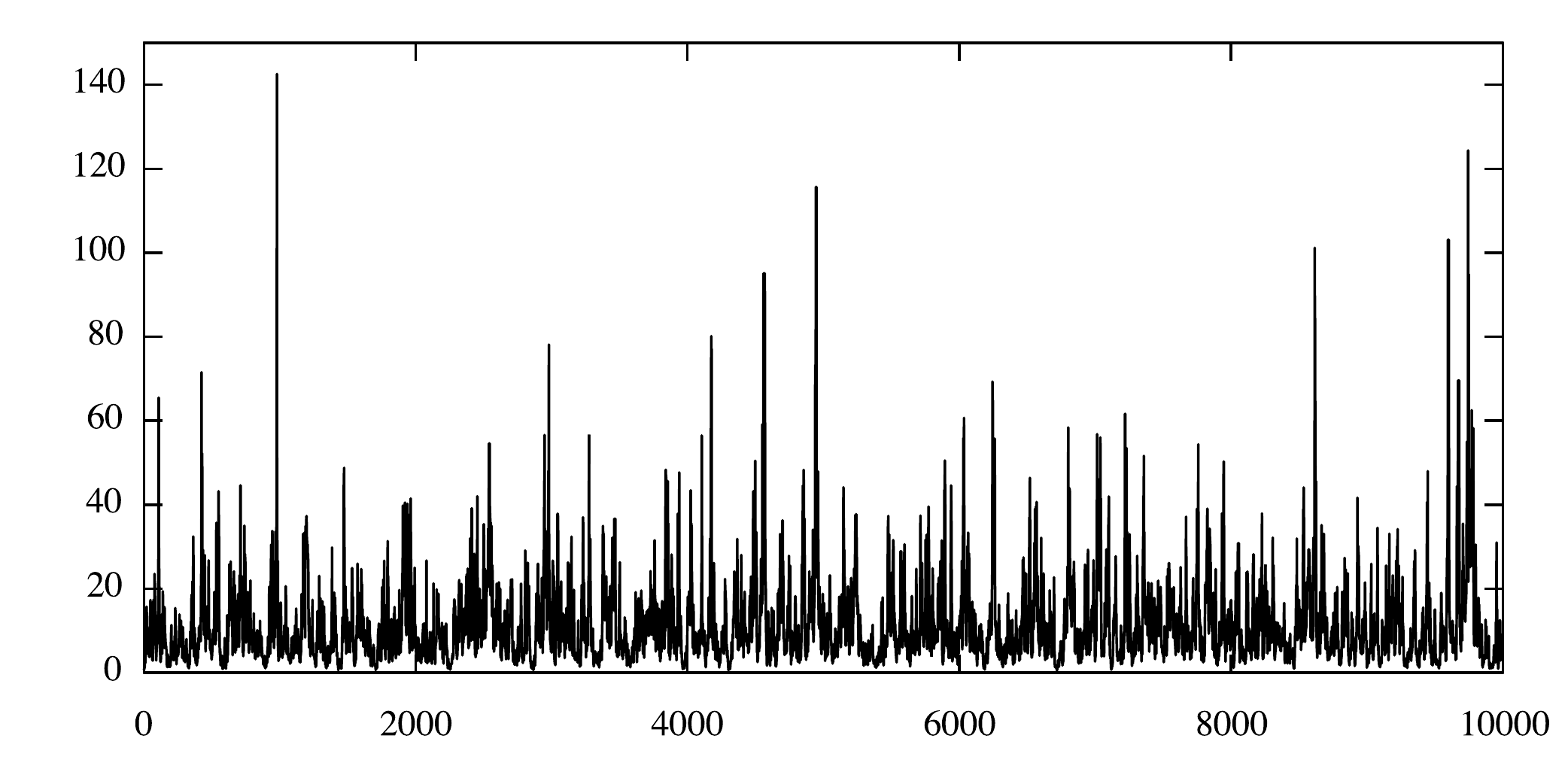}                 
            \includegraphics[type=pdf, ext=.pdf, read=.pdf, width=0.32\columnwidth]{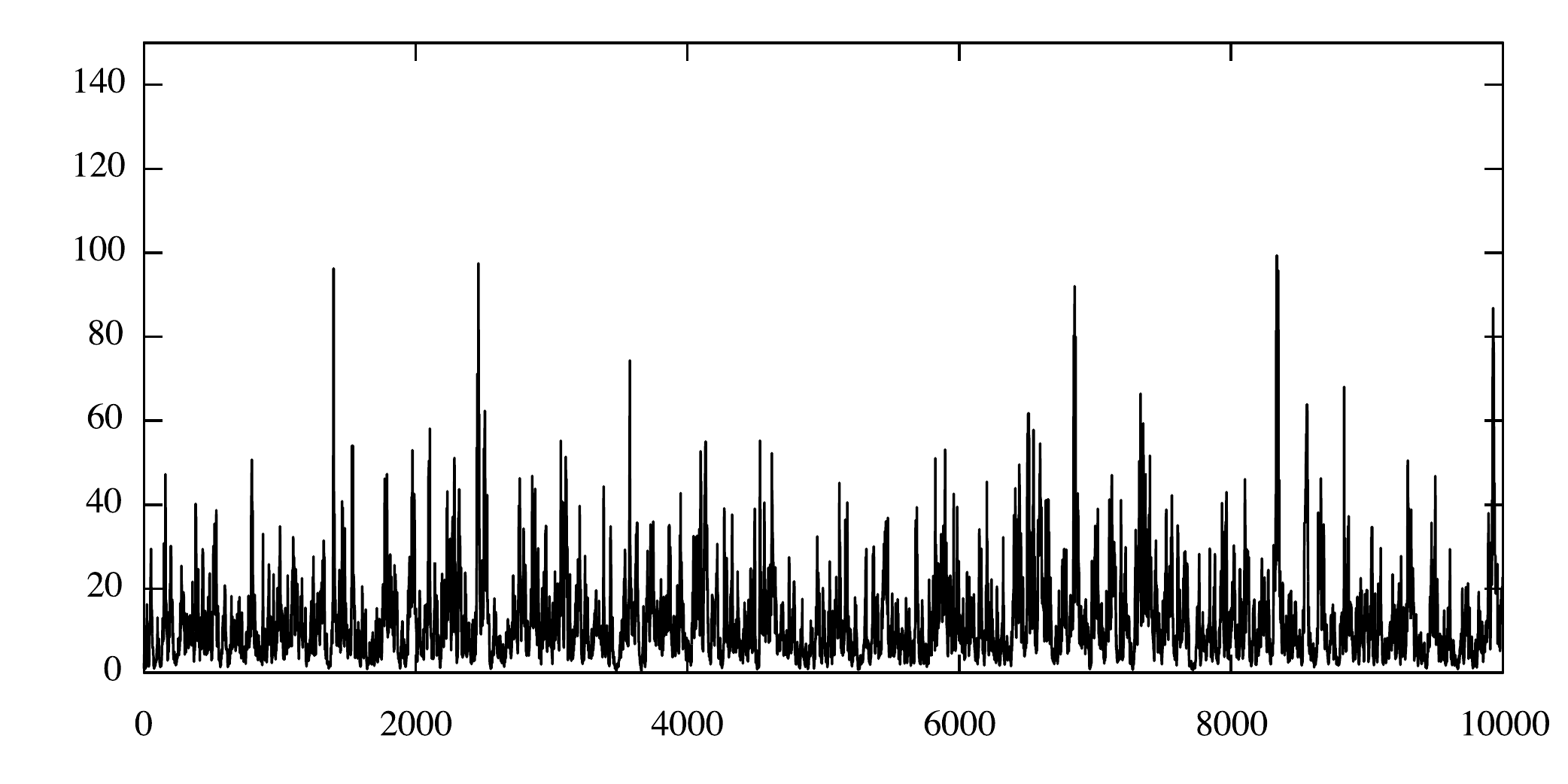}                   
            \includegraphics[type=pdf, ext=.pdf, read=.pdf, width=0.32\columnwidth]{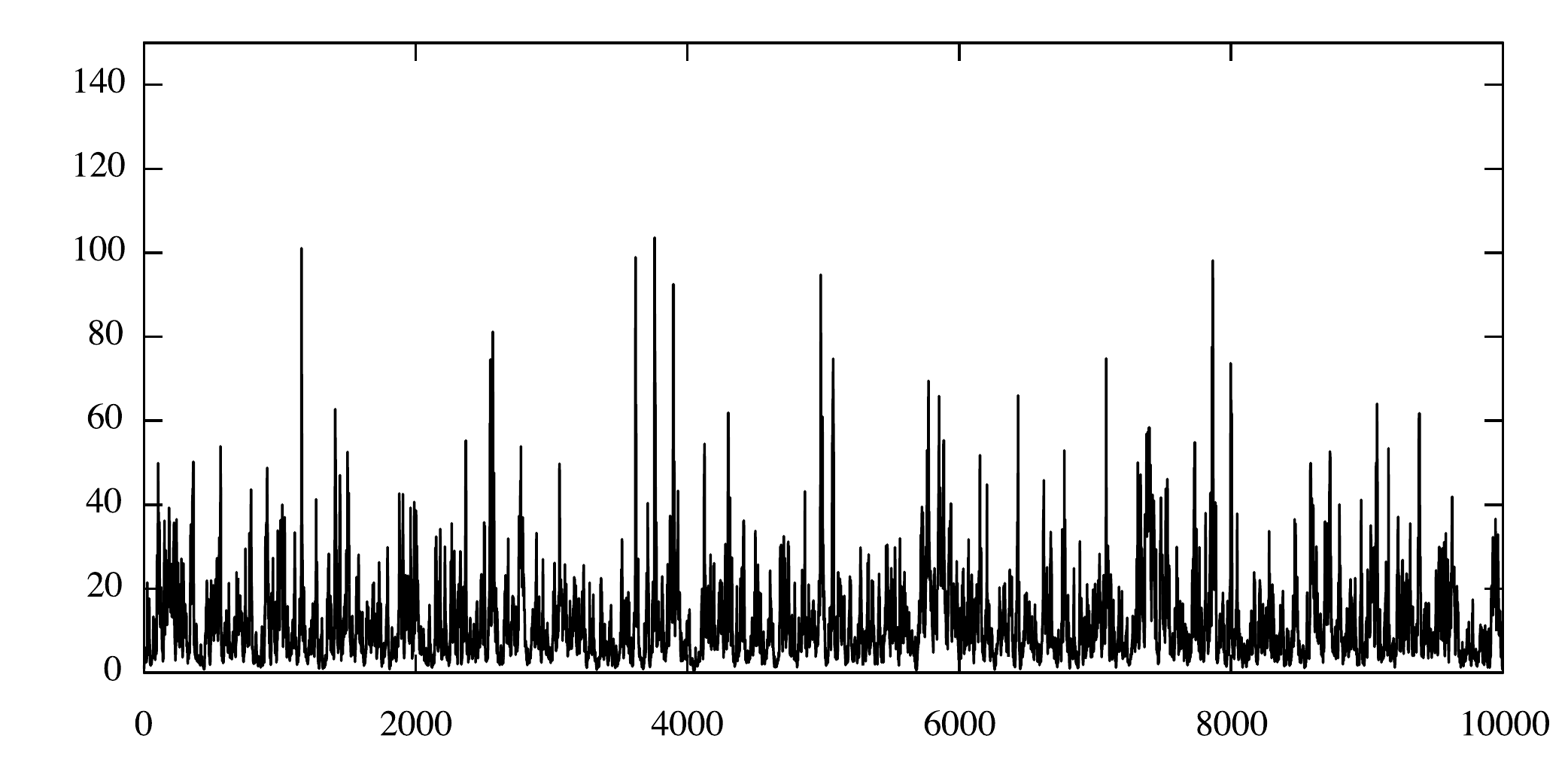}                       
            \includegraphics[type=pdf, ext=.pdf, read=.pdf, width=0.32\columnwidth]{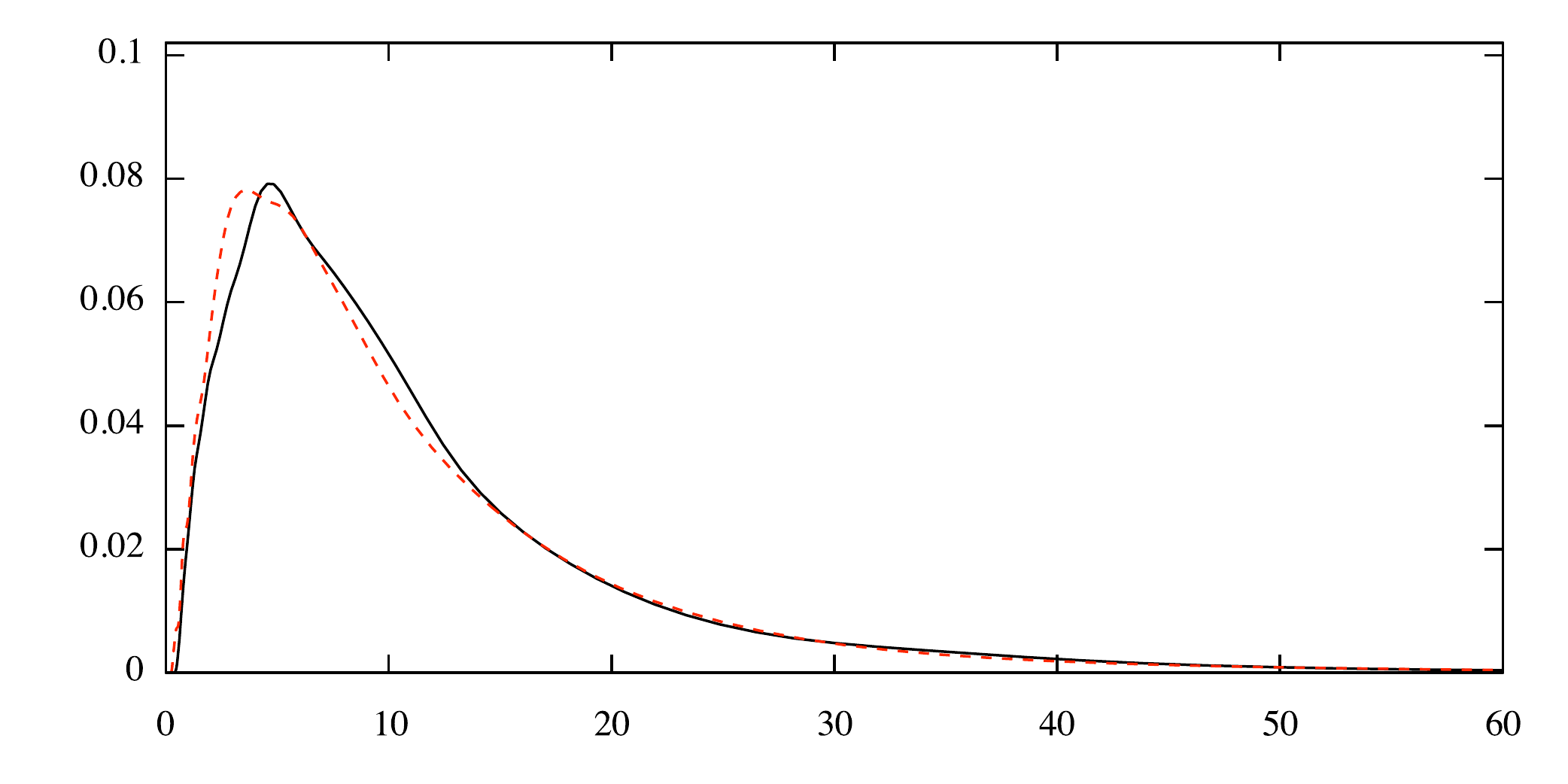}           
            \includegraphics[type=pdf, ext=.pdf, read=.pdf, width=0.32\columnwidth]{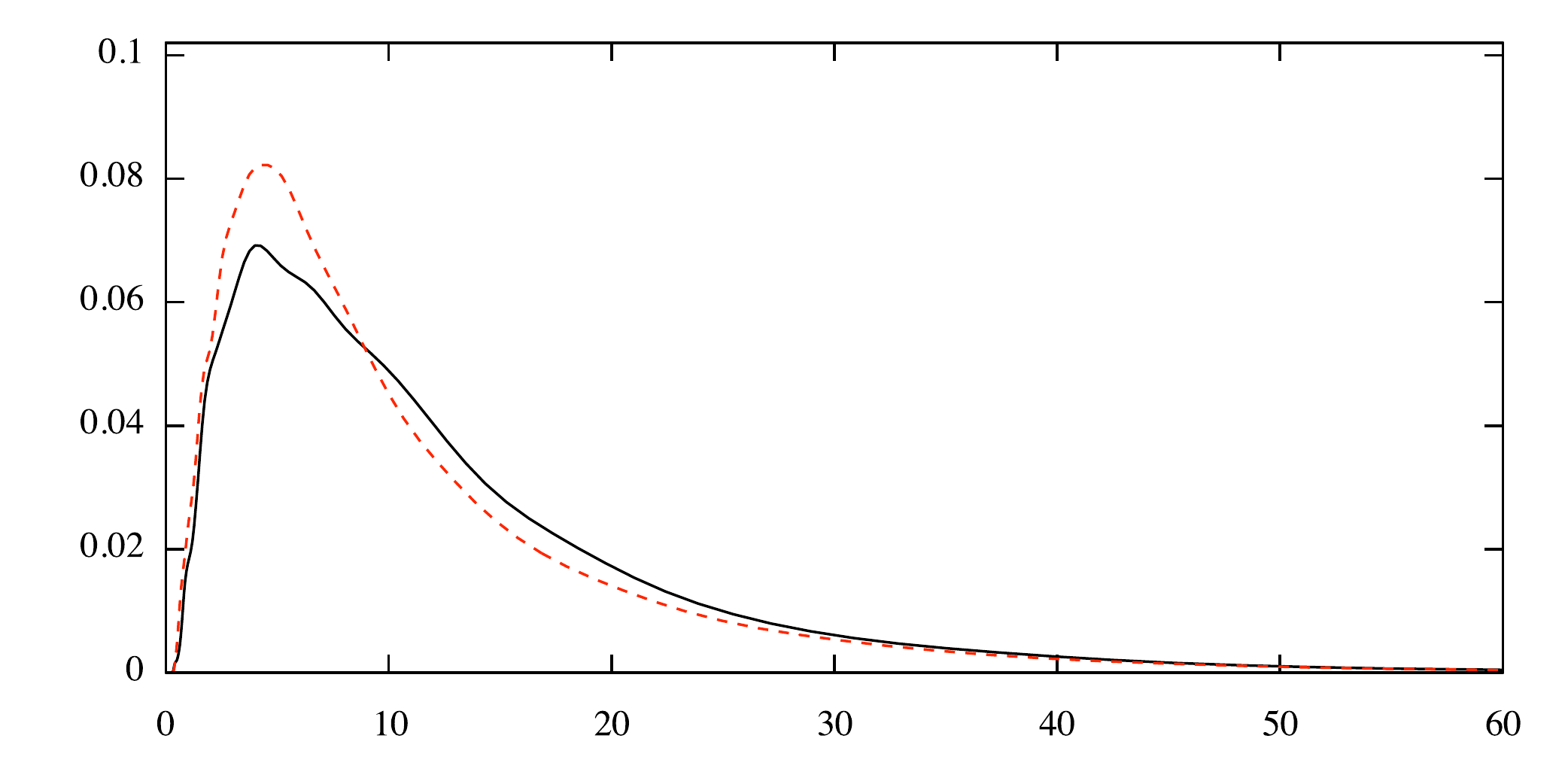}            
           \includegraphics[type=pdf, ext=.pdf, read=.pdf, width=0.32\columnwidth]{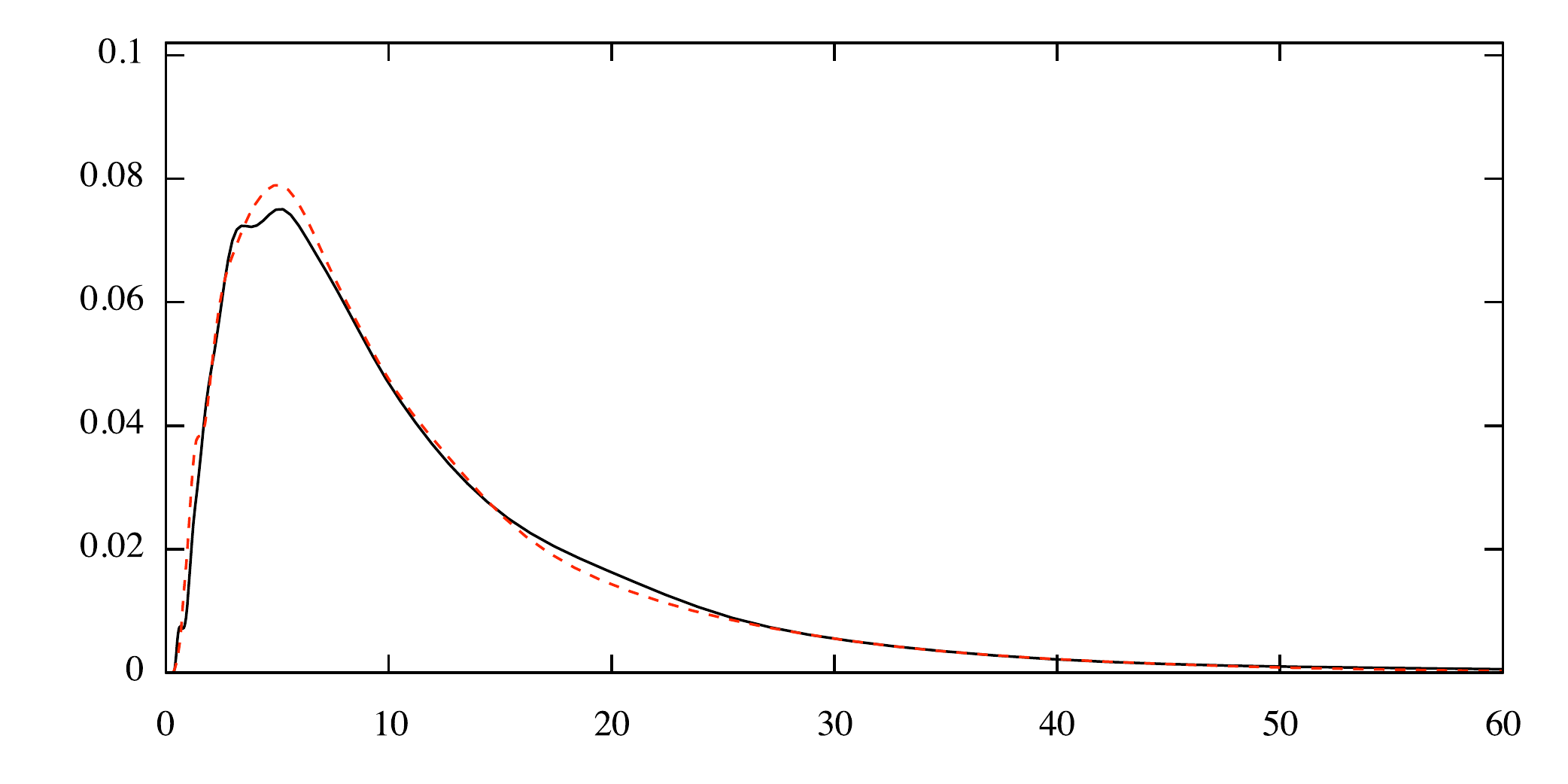}}
            \caption{NCA: $\delta$-chains (top)  and kernel density estimates of the posterior on $\delta$ (bottom) for dimensions $N=32,  512$ and $8192$ left to right. {In dashed red in the density plots is the density estimate using MA, considered as a gold standard.}}  
            \label{ch3:fig3}
    \end{figure}

            Our observations in Figures \ref{ch3:fig2} and \ref{ch3:fig3} are supported by the autocorrelation plots presented in Figure \ref{ch3:fig4}. The rate of decay of correlations in the $\delta$-chain in CA appears to decrease as the dimension increases, and in particular for $N=8192$ the correlations seem not to decay at all. On the contrary, the rate of decay of correlation in the $\delta$-chain in NCA appears not to be affected by the increase in dimension and is very similar to the one in MA.
 
\begin{figure}[htp]
           \center{ \includegraphics[type=pdf, ext=.pdf, read=.pdf, width=0.3\columnwidth]{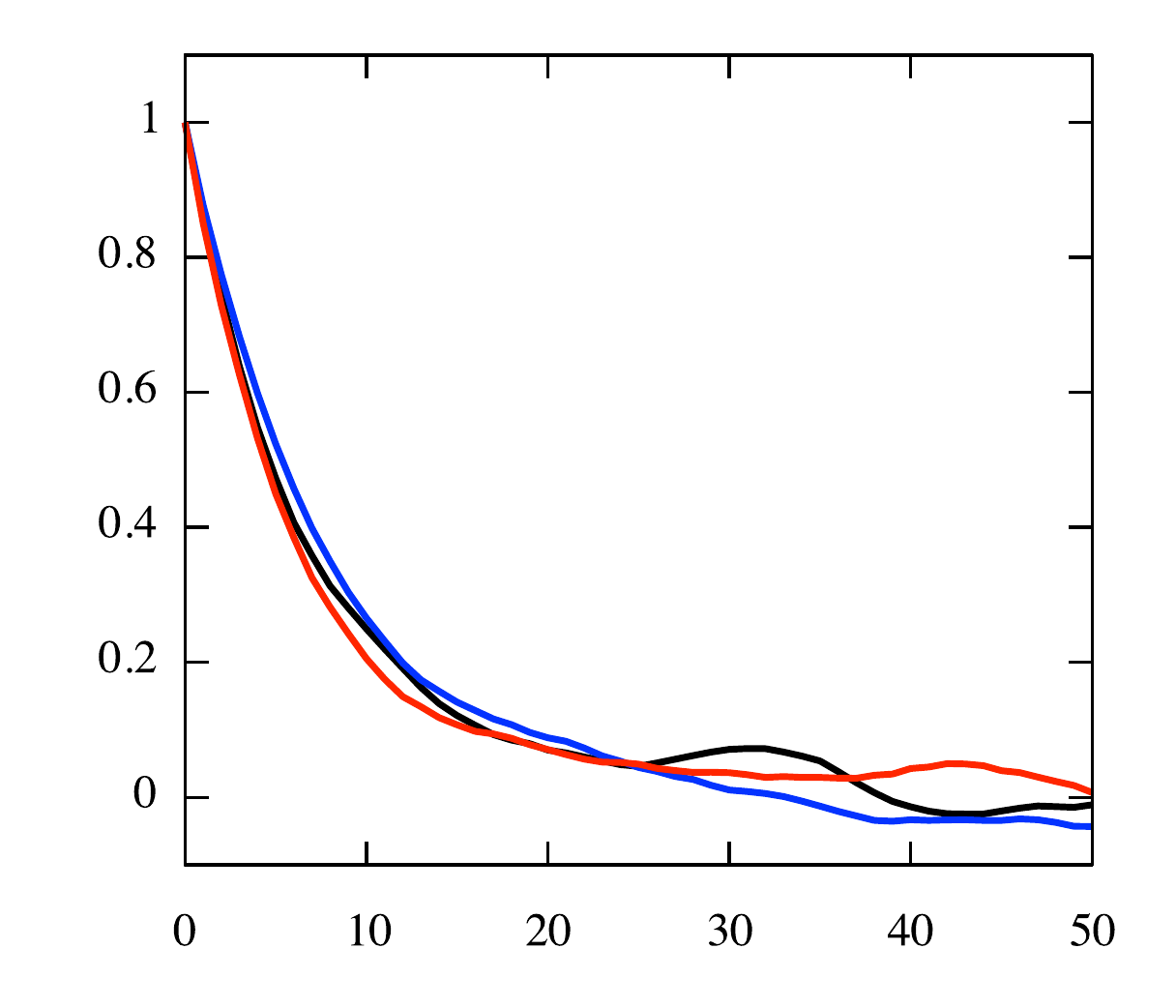}
            \includegraphics[type=pdf, ext=.pdf, read=.pdf, width=0.3\columnwidth]{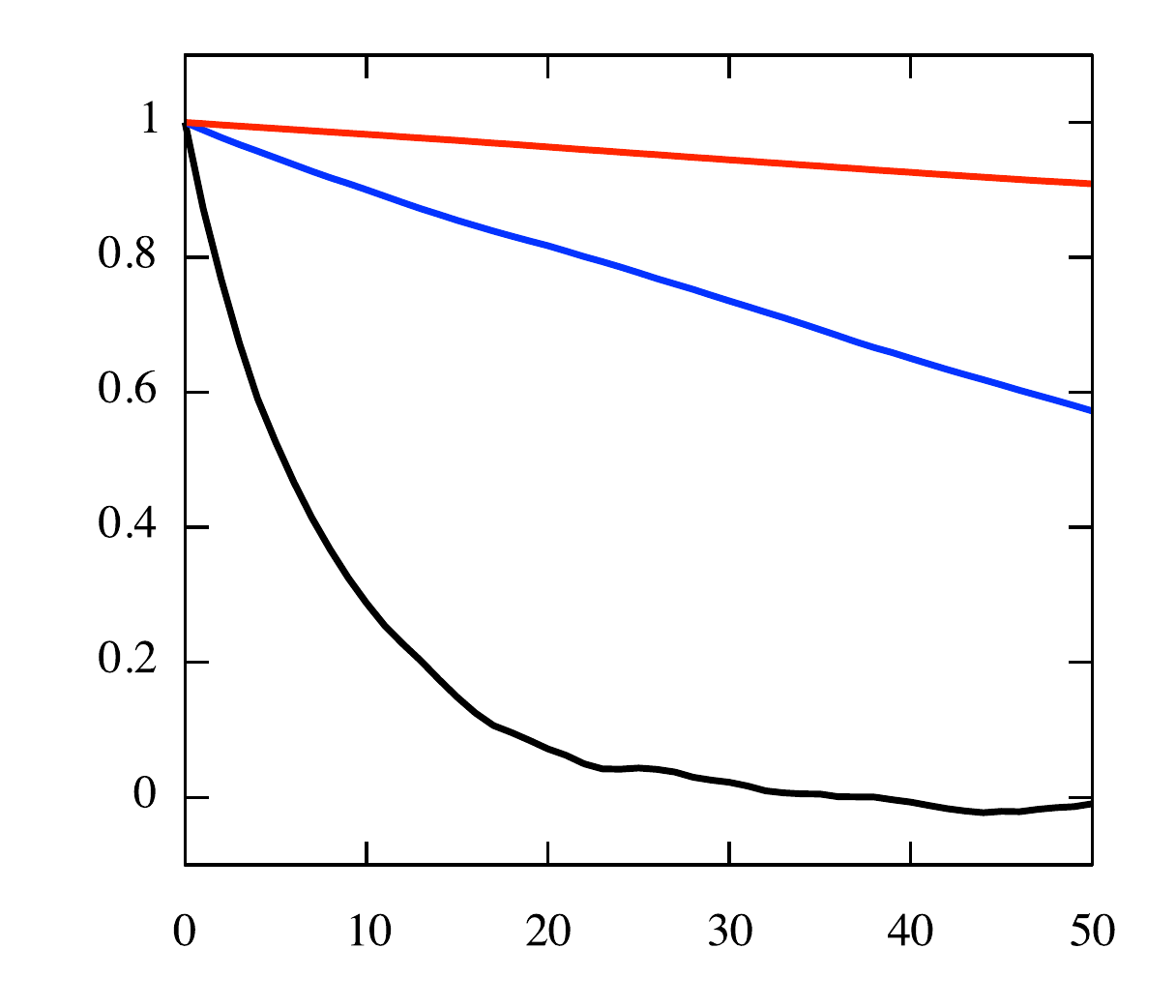}
             \includegraphics[type=pdf, ext=.pdf, read=.pdf, width=0.3\columnwidth]{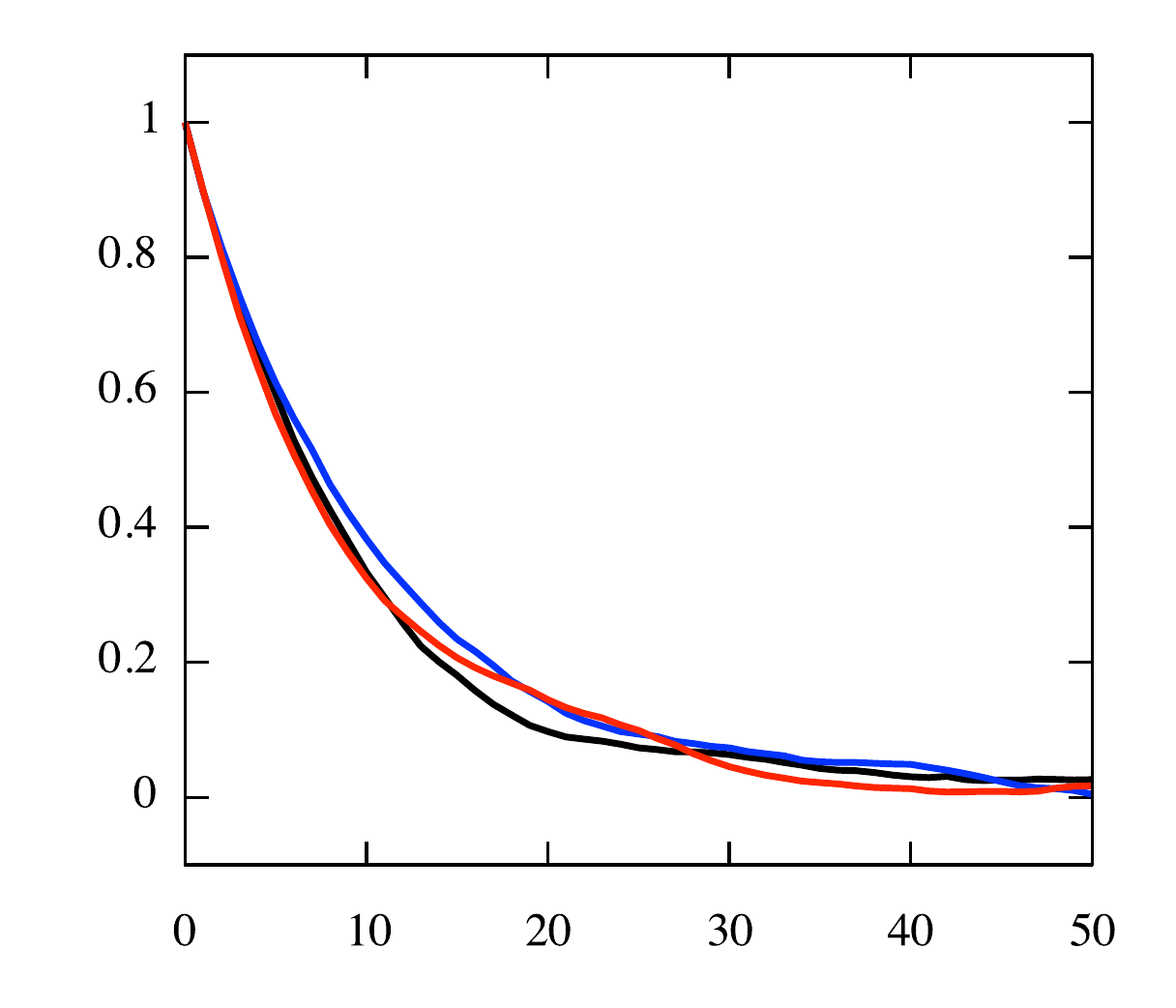}
           
           }   
            \caption{Autocorrelation functions of $\delta$-chain, dimensions 32 (black), 512 (red) and 8192 (blue); {left for MA, middle for CA, right for NCA}.}  
            \label{ch3:fig4}       
\end{figure}

%%%%%% EXAMPLE 2%%%%%%

\subsection{Linear Bayesian inverse problem with coarse data using finite difference discretization}\label{ch3:nex2}
We consider a modification of the simultaneously diagonalizable setup described in subsection \ref{ch3:ssec:diag}, where $\X=L^2(\I), \I=(0,1)$ and we allow $\bK$ to map $\X$ into $\R^M$ and hence have data $y\in\R^M$. This setting is not directly covered by the theoretical analysis presented in section \ref{ch3:sec:main}, however our theory readily generalizes to cover this setting; we refer the interested reader to the PhD thesis \cite[section 4.5]{SA13} for more details. The generalized analysis holds again under Assumptions \ref{ch3:ass1} on the discrete-level based on intuition which holds for problems satisfying Assumptions \ref{ch3:infass1} on the underlying continuum model for the unknown $\bu$. 

In particular, we consider the problem of recovering a true signal ${\bu^\dagger}$, by observing a blurred version of it at $M$ equally spaced points $\{\frac{1}{M+1},...,\frac{M}{M+1}\}$, polluted by additive independent Gaussian noise of constant variance $\hl^{-1}$. We define $\bA_0$ to be the negative Laplacian with Dirichlet boundary condtions in $\I$. We let $\bP$ be defined as in subsection \ref{fd} and define $\tilde{\bK}=(\bI+\frac{1}{100\pi^2} \bA_0)^{-1}$, and consider the case $\bK=\bP \tilde{\bK}$, $\bC_0=\bA_0^{-1}$ and $\C_1=I_M$ in the setting of subsection \ref{sec:linear} and where $I_M$ is the $M\times M$ identity matrix. Notice that due to the smoothing effect of $\tilde{\bK}$, the operator $\bK$ is bounded in $\X$. However, due to the presence of $\bP$, $\bK$ is not simultaneously diagonalizable with $\bC_0$.

We now check that this problem satisfies Assumptions \ref{ch3:infass1}. Indeed, assuming without loss of generality that $\hl=\delta=1$, by \cite[Example 6.23]{AS10} we have that the posterior covariance and mean satisfy (\ref{ch3:eq:prec}) and (\ref{ch3:eq:mean}), hence $\bC_0^{-\frac12}\bme(y)=\bC_0^{-\frac12}(\bC_0^{-1}+\bK^\ast \bK)^{-1}\bK^\ast y=(I+\bC_0^\frac12\bK^\ast \bK\bC_0^\frac12)^{-1}\bC_0^\frac12\bK^\ast y$, where $\bC_0^\frac12 \bK^\ast y\in \X,$  and $(I+\bC_0^\frac12\bK^\ast \bK\bC_0^\frac12)^{-1}$ is bounded in $\X$ by the nonnegativity of $\bC_0^\frac12\bK^\ast \bK\bC_0^\frac12$. Furthermore, we have that $\tr(\C_1^{-\frac12}\bK\bC_0 \bK^\ast \C_1^{-\frac12})=\tr(\bK \bC_0 \bK^\ast)$, which is finite since $\bK \bC_0 \bK^\ast$ is an $M\times M$ matrix.

We discretize this setup at level $N$, using the finite differences approximation as explained in subsection \ref{fd}. In particular, we discretize $\bA_0, \bP$ and $\bP^\ast$ by replacing them with the matrices $\A_0, P$ and $(N+1)P^T$  respectively as in subsection \ref{fd}; this induces a discretization of the operators $\bK$ and $\bC_0$ by replacing them with the corresponding matrices $K$ and $\C_0$ calculated through the appropriate functions of $\A_0$ and $P$. In defining $K$, we also replace the identity operator by the $N\times N$ identity matrix. We do not prove that this discretization scheme satisfies Assumptions \ref{ch3:ass1}, however we expect this to be the case. 

We assume that we have data produced from the underlying true signal ${\bu^\dagger}(x)=0.75\cdot\1_{[0.1,0.25]}(x)+0.25\cdot\1_{[0.35,0.38]}+\sin^4(2\pi x)\cdot\1_{[0.5,1]}(x), \;x\in\I.$ In particular, we construct data of the form \[y=\bK{\bu^\dagger}+\lambda^{-\frac12}\C_1^\frac12\xi,\]
where $\lambda=100$ and 
using a discretization level $N_c=8192$ for the unknown; we treat this discretization level as the continuum limit.

We implement Algorithms \ref{ch3:algstd} (CA), \ref{ch3:algrep} (NCA) and \ref{ch3:algmar} (MA) for constant number of observation points $M=15$,  and for discretization levels of the unknown $N=15, 127, 1023$, with hyper-parameters $\ad=1,\bd=10^{-4},$ chosen to give uninformative hyper-priors, that is, hyper-priors whose variance is much larger than their mean. Following the discussion in subsection \ref{contr}, we view MA as the gold standard and benchmark CA and NCA against it. We use $10^4$ iterations and choose $\de{0}=1$ in all cases. We again use a constant burn-in time of $10^3$ iterations.

In Figure \ref{ch3:fig6} we plot the true solution, the noisy data and the sample means and credibility bounds using CA and NCA for $N=1023$. The sample means and credibility bounds at other discretization levels of the unknown are similar and are therefore omitted. 
\begin{figure}[htp]
            \includegraphics[type=pdf, ext=.pdf, read=.pdf, width=0.32\columnwidth]{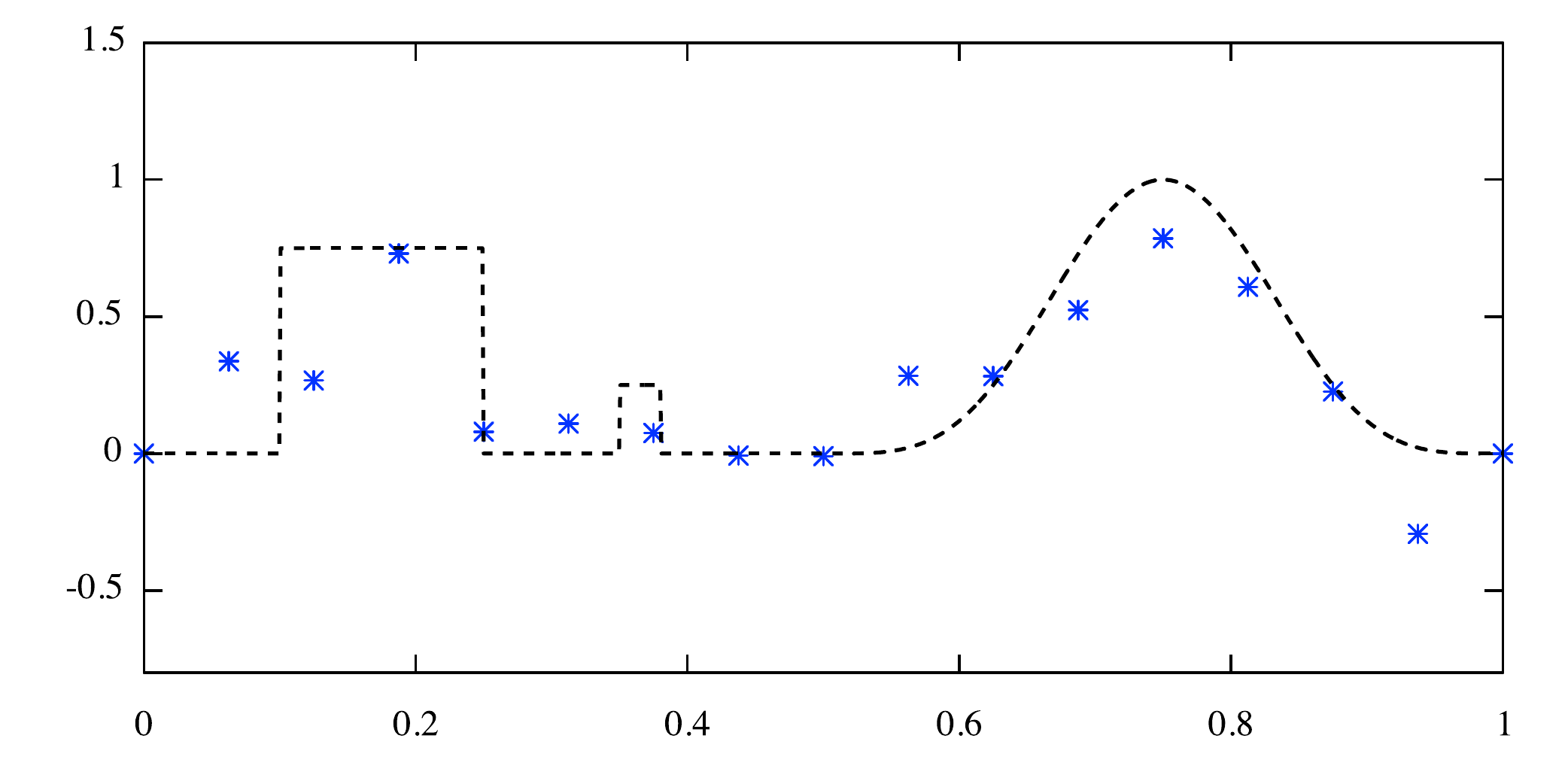} 
            \includegraphics[type=pdf, ext=.pdf, read=.pdf, width=0.32\columnwidth]{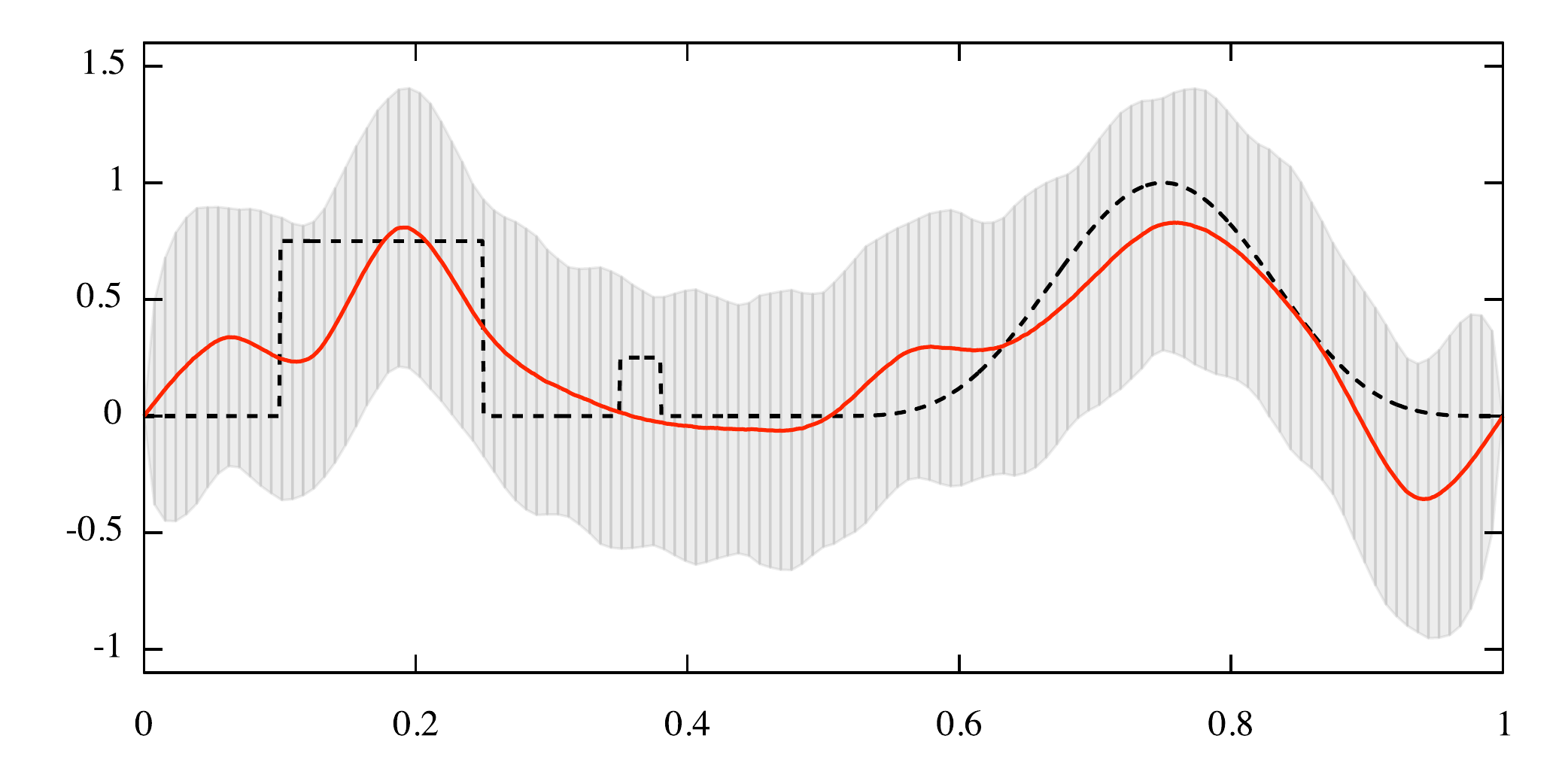}
              \includegraphics[type=pdf, ext=.pdf, read=.pdf, width=0.32\columnwidth]{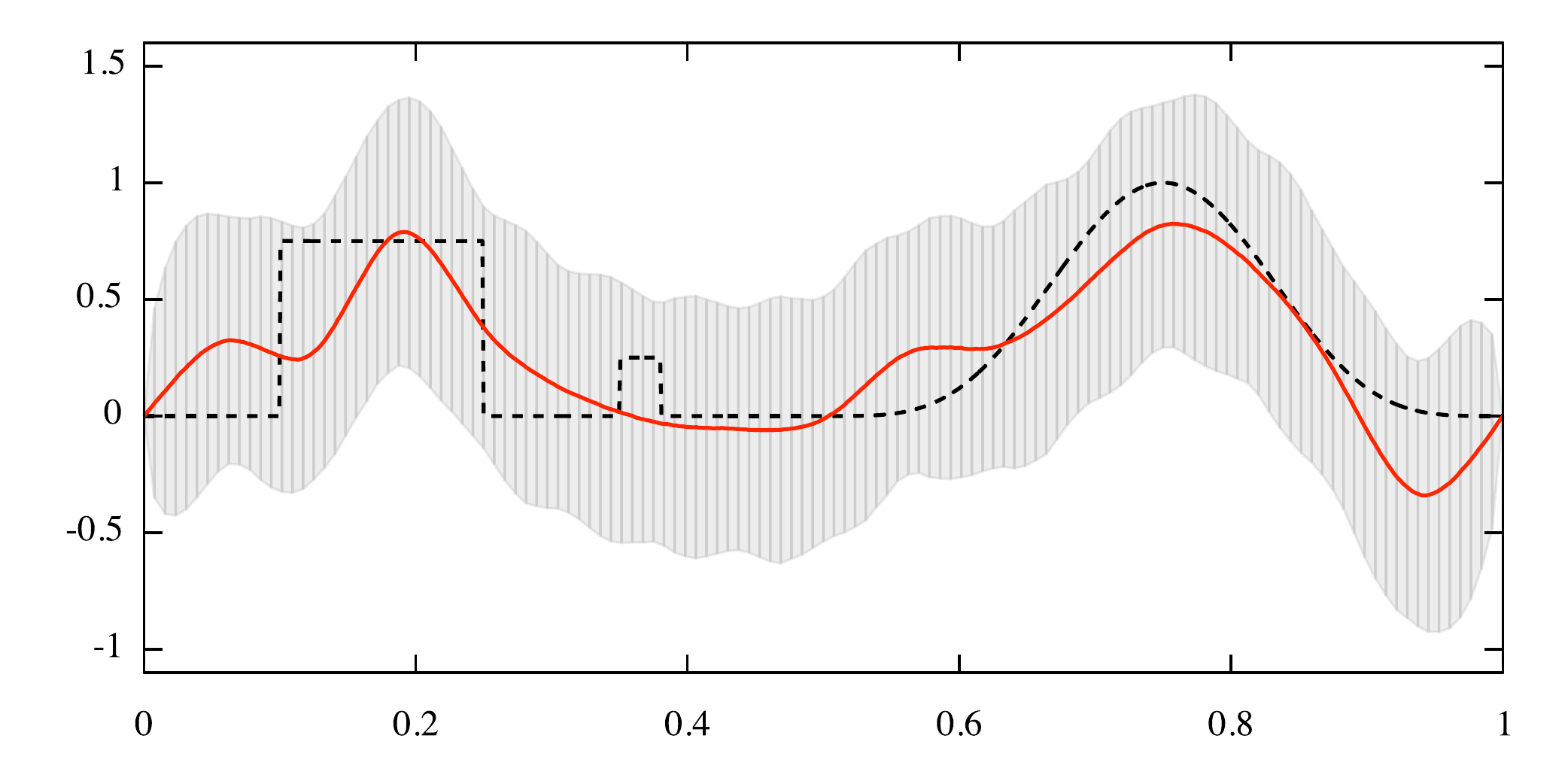}
            \caption{Left: true solution (dashed black) and discrete blurred noisy data (blue asterisks). Middle and right: true solution (dashed black), sample mean (red continuous) and 87.5$\%$ credibility bounds (shaded area) for CA (middle) and NCA (right). Dimensions of true solution and observed data are $N=1023$ and $M=15$ respectively.} 
            \label{ch3:fig6}           
\end{figure}

In Figure \ref{ch3:fig7} we see that for CA, in small dimensions the $\delta$-chain has a healthy mixing, however as predicted by our theory, as $N$ increases it becomes increasingly slower and exhibits diffusive behaviour. This is also reflected in the density plots where we observe that as $N$ increases, the kernel density estimates computed using CA look less and less like the density estimates computed using MA which we consider to be optimal in this setting. In Figure \ref{ch3:fig8} we see that for NCA the $\delta$-chain appears to be robust with respect to the increase in dimension; this is also reflected in the density estimates using NCA which now look very close to the ones obtained using MA for all discretization levels.

 \begin{figure}[htp]
           \center{
            \includegraphics[type=pdf, ext=.pdf, read=.pdf, width=0.32\columnwidth]{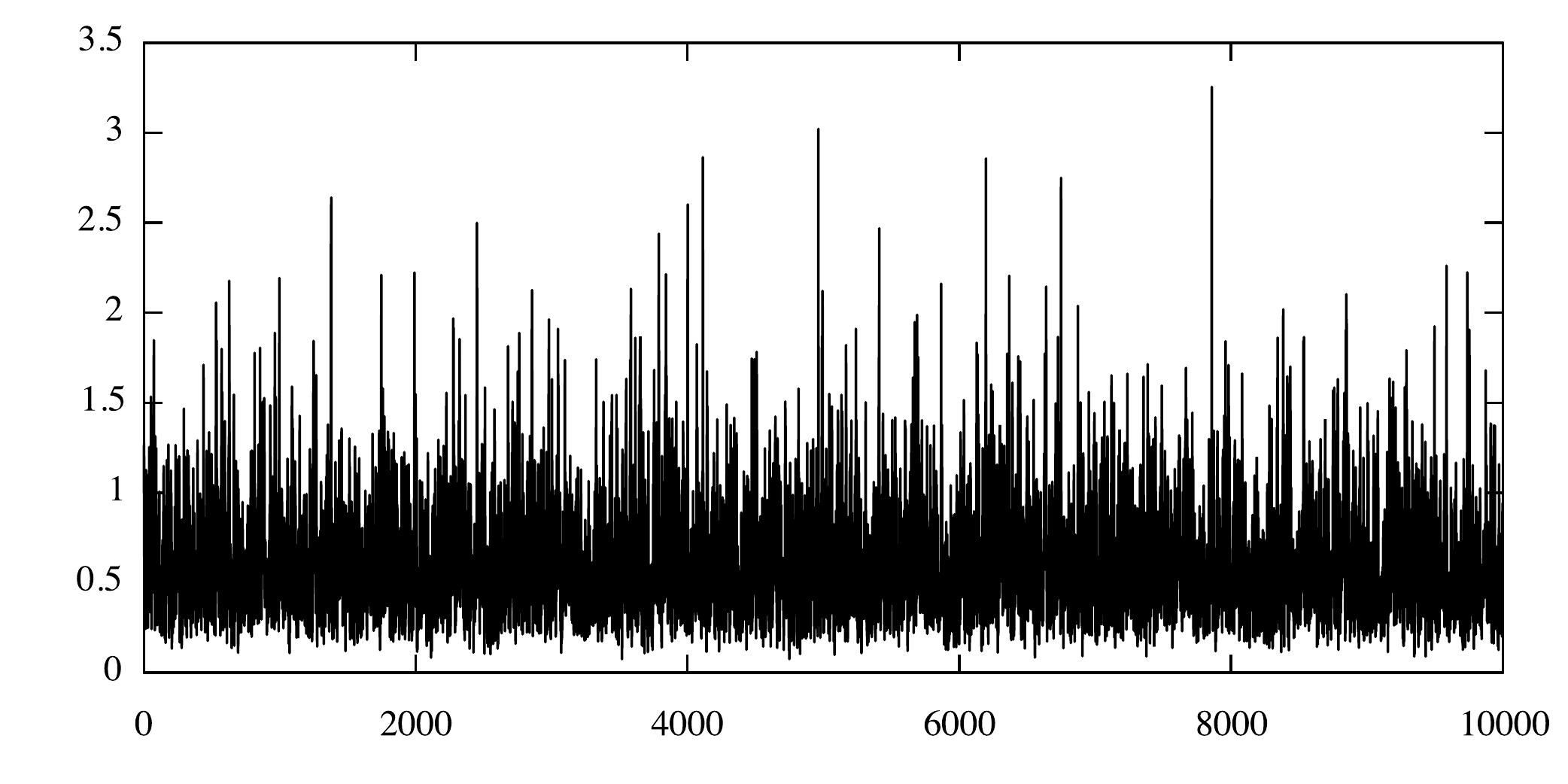}         
           \includegraphics[type=pdf, ext=.pdf, read=.pdf, width=0.32\columnwidth]{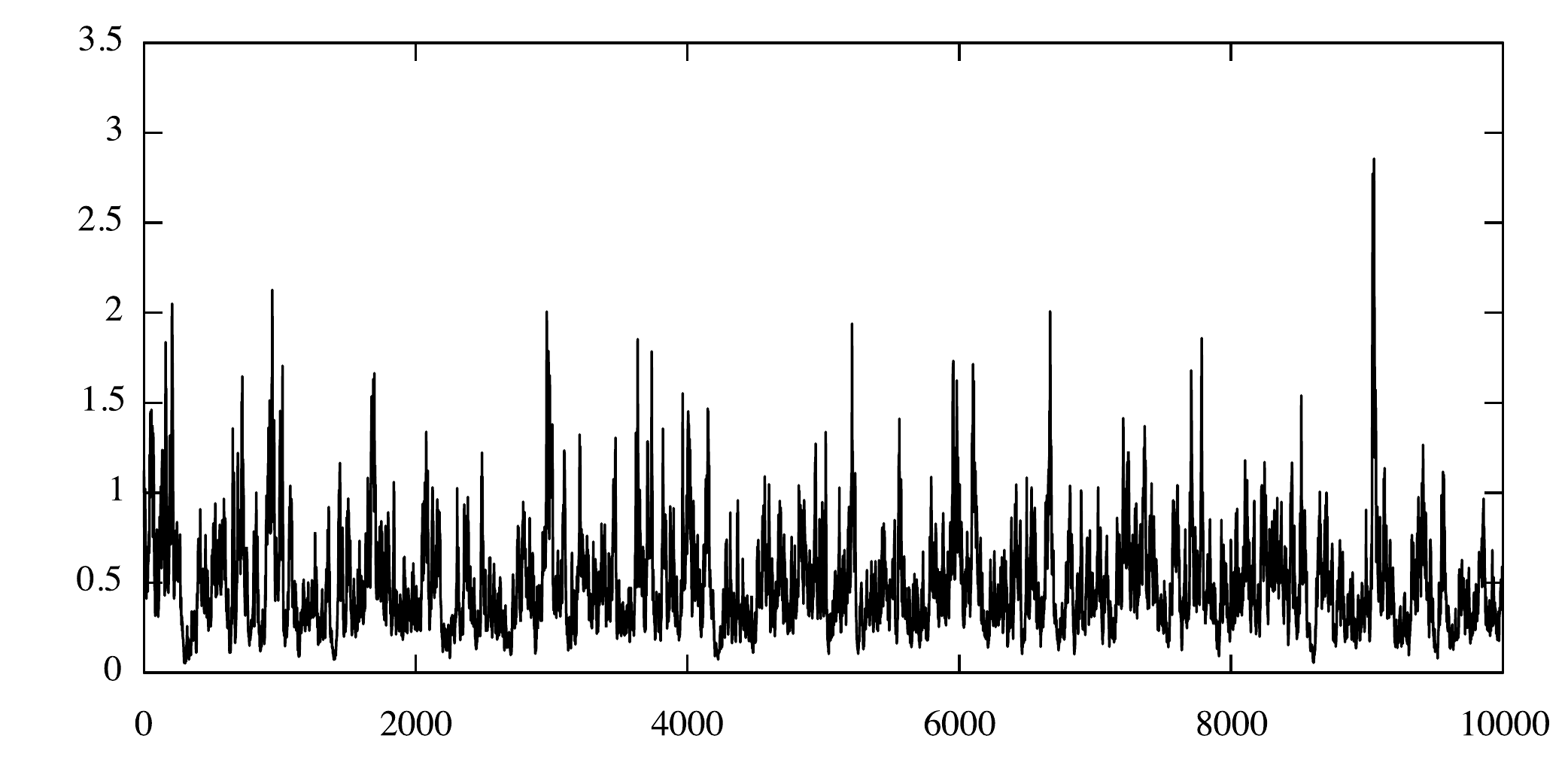}  
          \includegraphics[type=pdf, ext=.pdf, read=.pdf, width=0.32\columnwidth]{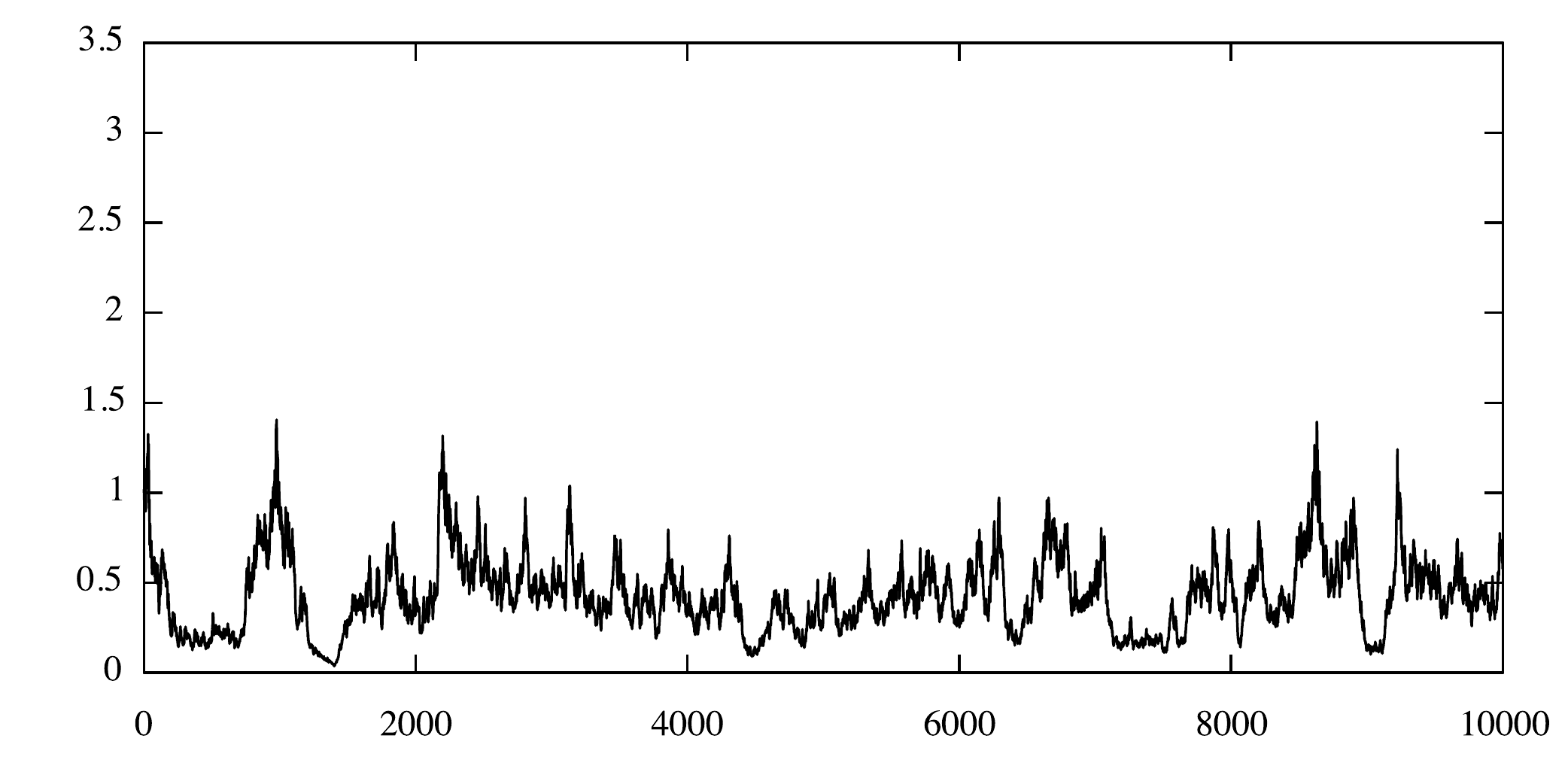}
          \includegraphics[type=pdf, ext=.pdf, read=.pdf, width=0.32\columnwidth]{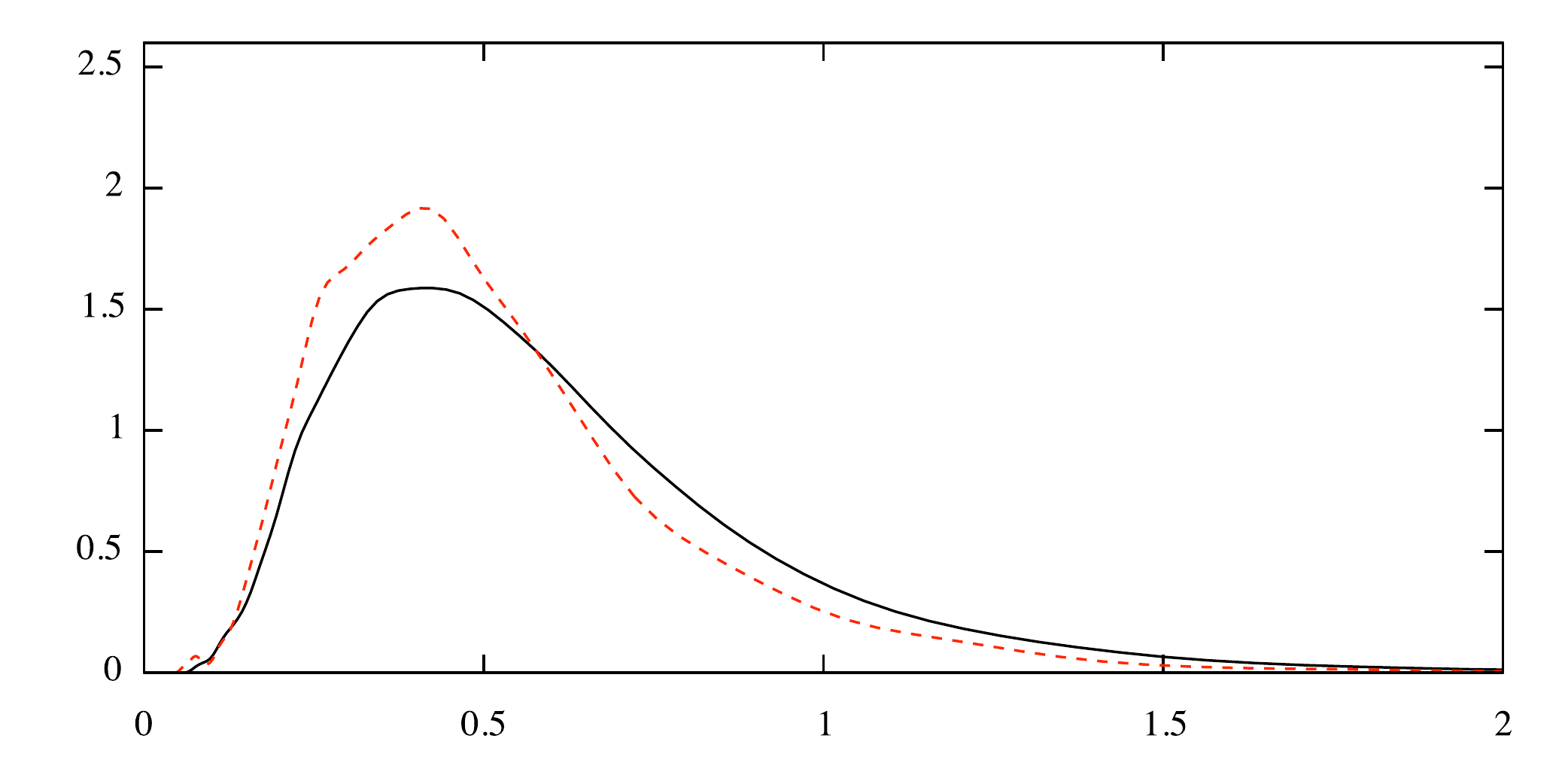}               
            \includegraphics[type=pdf, ext=.pdf, read=.pdf, width=0.32\columnwidth]{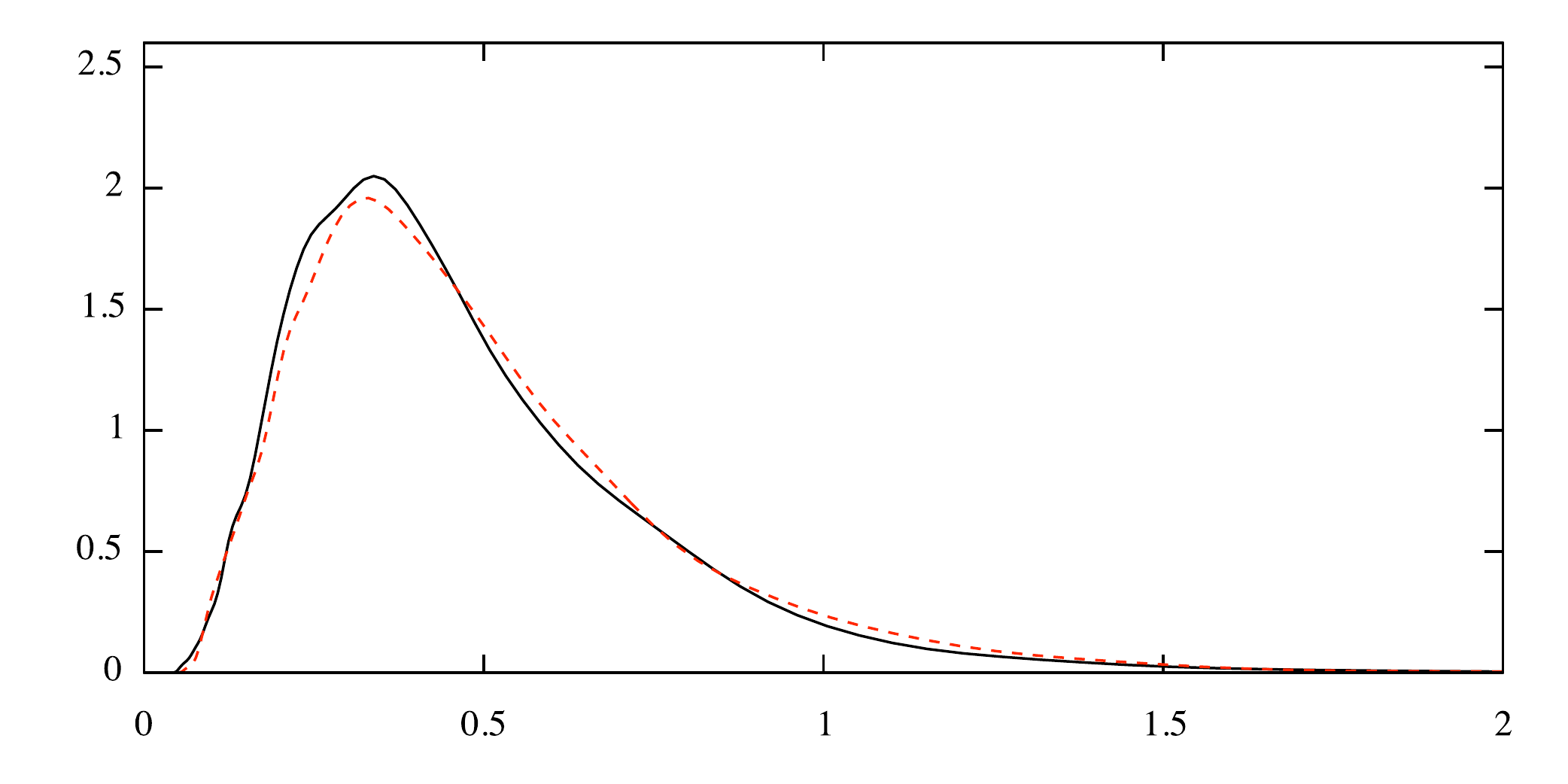}            
            \includegraphics[type=pdf, ext=.pdf, read=.pdf, width=0.32\columnwidth]{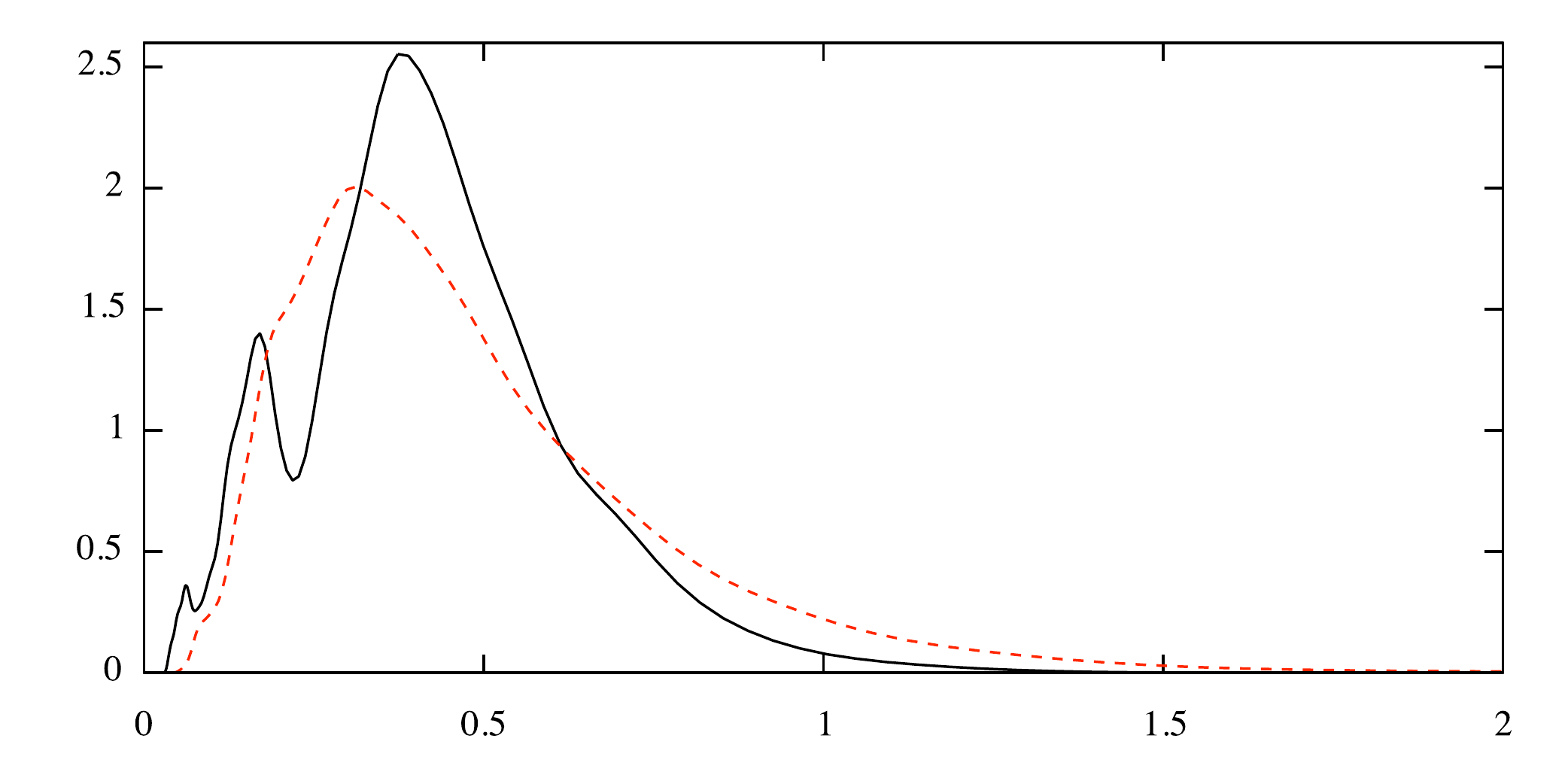}} 
            \caption{CA: $\delta$-chains (top)  and kernel density estimates of the posterior on $\delta$ (bottom) for dimensions $N=15,  127$ and $1023$ left to right. In dashed red in the density plots is the density estimate using MA, considered as a gold standard.} 
              \label{ch3:fig7}
\end{figure}

 \begin{figure}[htp]
           \center{
            \includegraphics[type=pdf, ext=.pdf, read=.pdf, width=0.32\columnwidth]{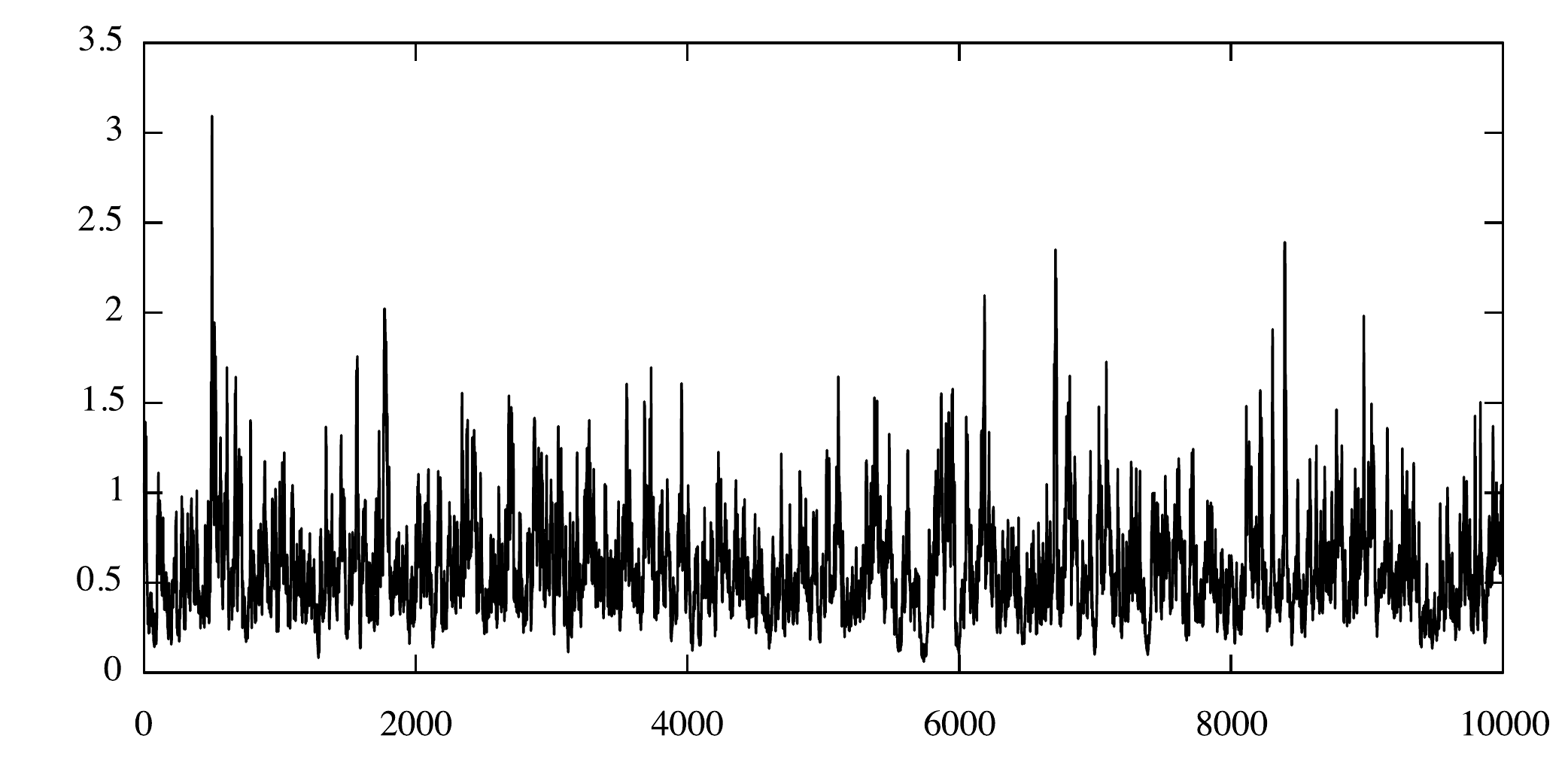}            
                        \includegraphics[type=pdf, ext=.pdf, read=.pdf, width=0.32\columnwidth]{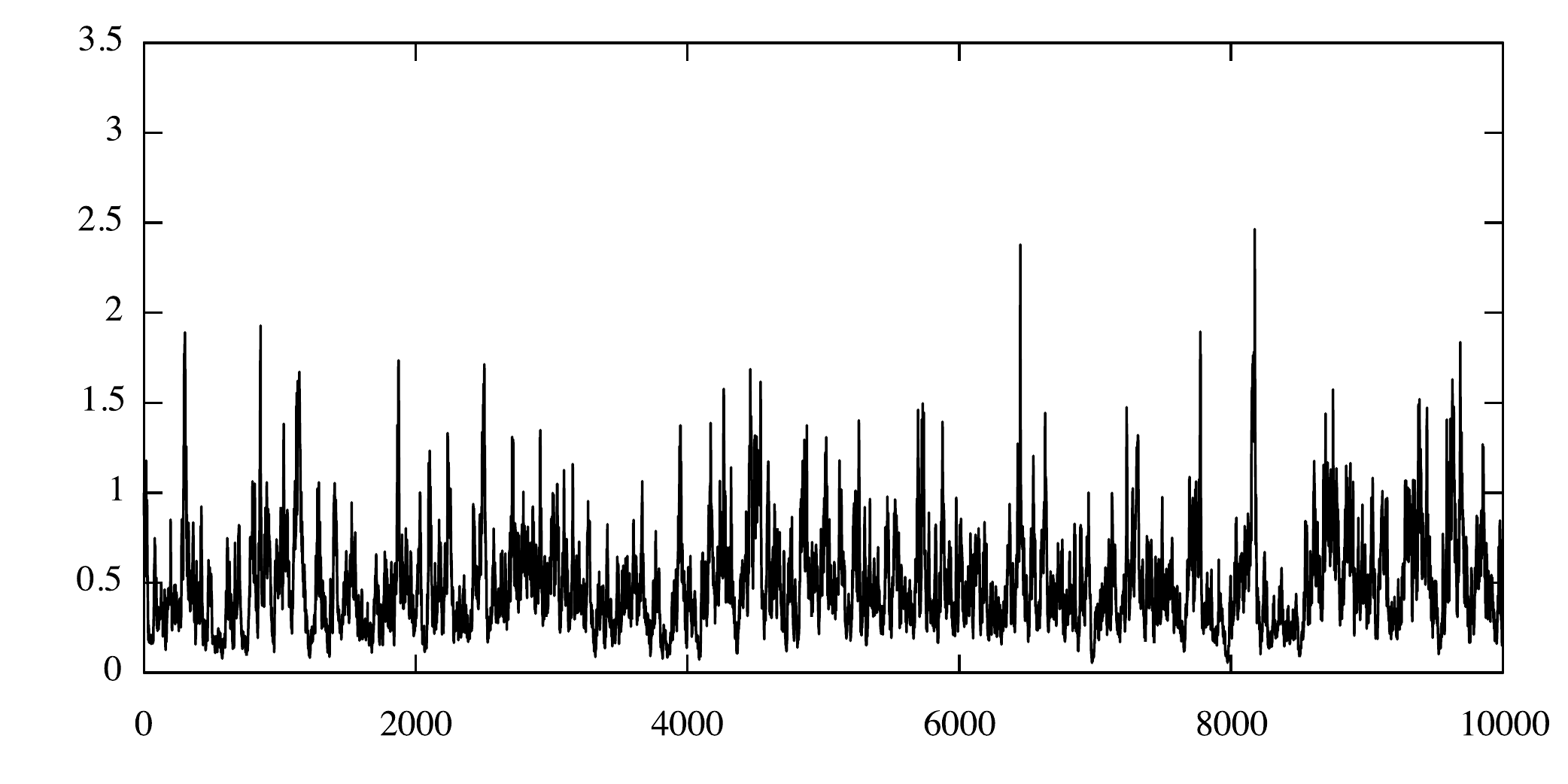}                    
            \includegraphics[type=pdf, ext=.pdf, read=.pdf, width=0.32\columnwidth]{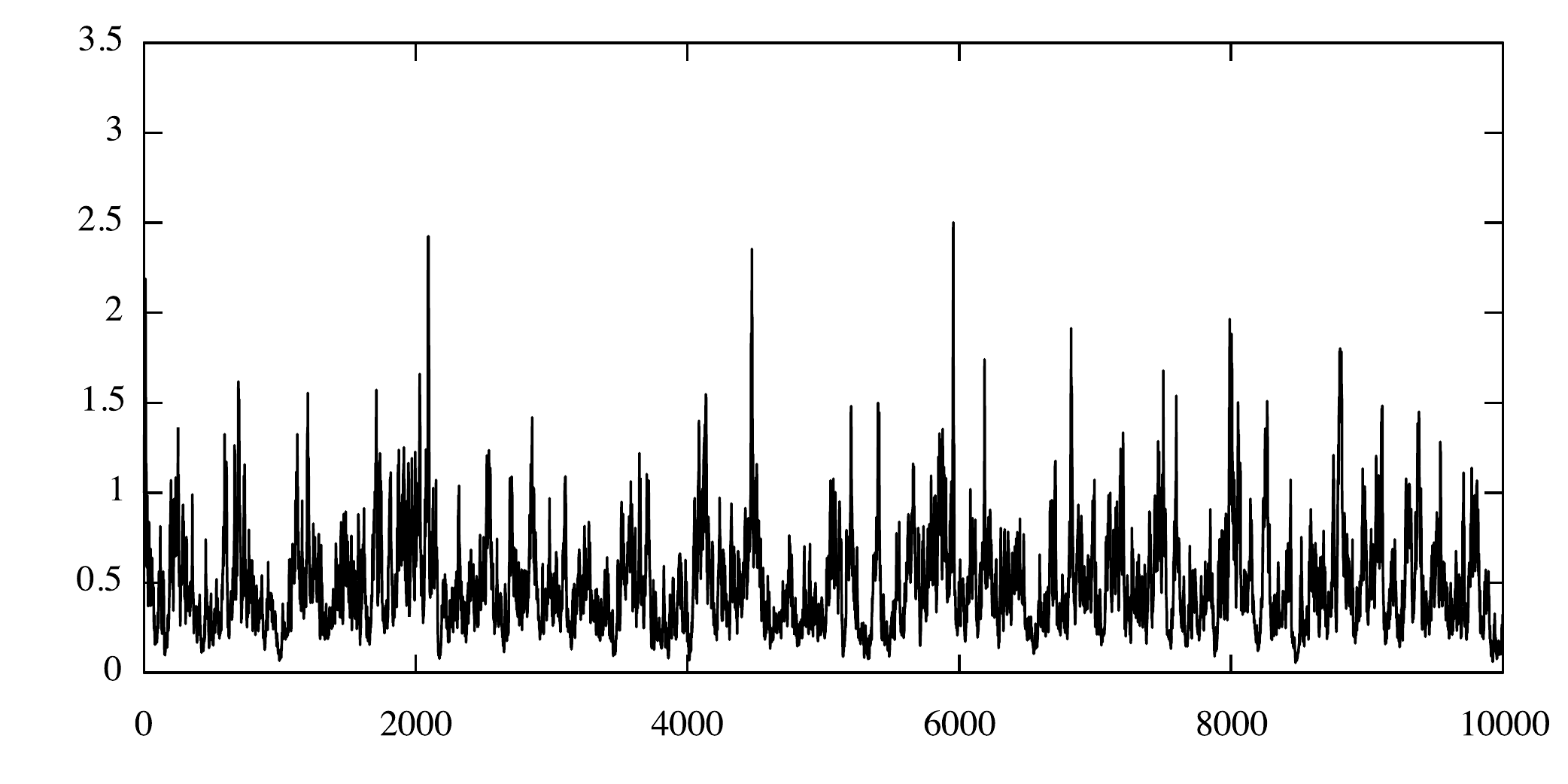}
            \includegraphics[type=pdf, ext=.pdf, read=.pdf, width=0.32\columnwidth]{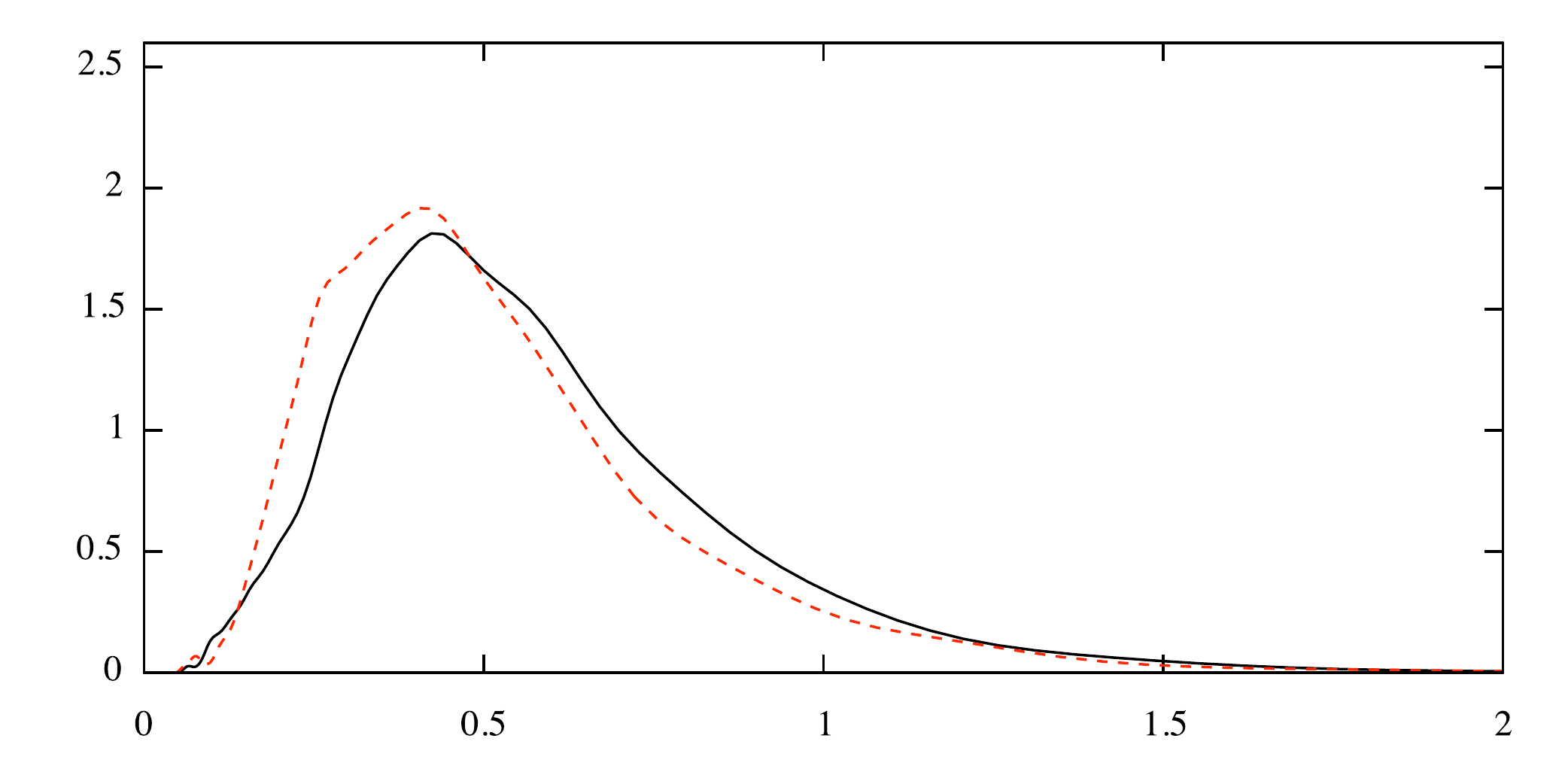}
            \includegraphics[type=pdf, ext=.pdf, read=.pdf, width=0.32\columnwidth]{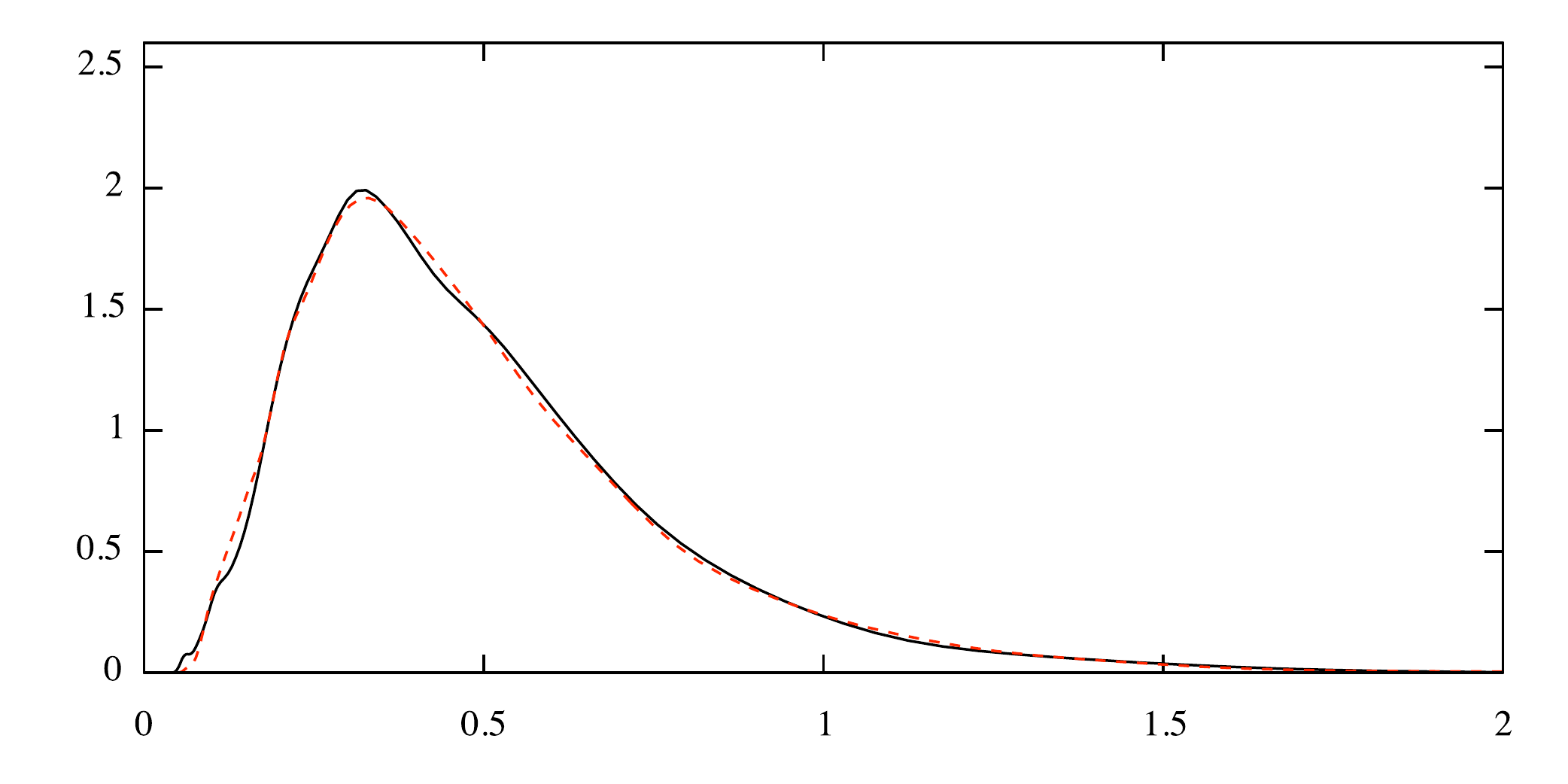}  
            \includegraphics[type=pdf, ext=.pdf, read=.pdf, width=0.32\columnwidth]{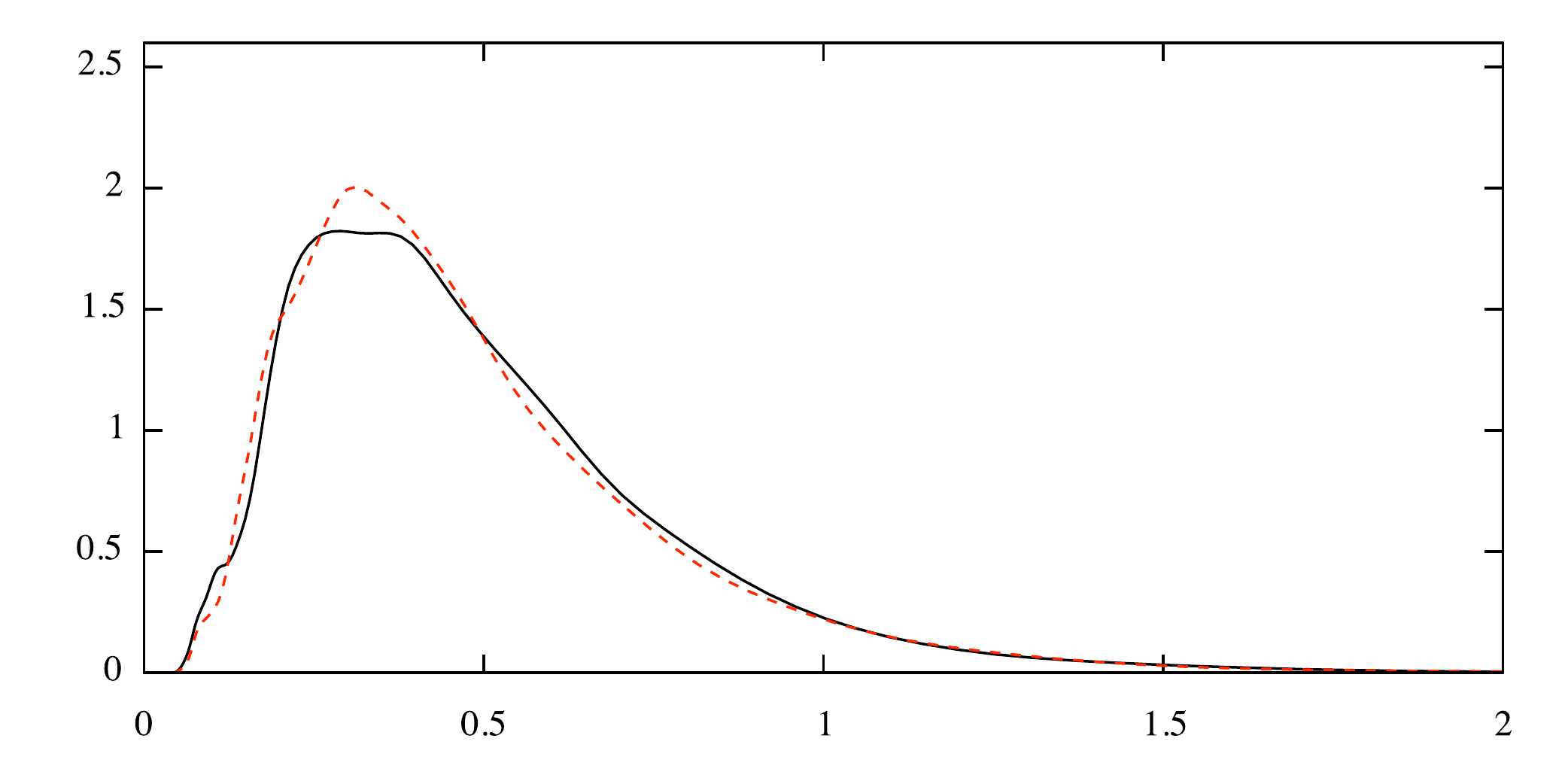}}
            \caption{NCA: $\delta$-chains (top)  and kernel density estimates of the posterior on $\delta$ (bottom) for dimensions $N=15,  127$ and $1023$ left to right. {In dashed red in the density plots is the density estimate using MA, considered as a gold standard.}} 
              \label{ch3:fig8}
\end{figure}

Our observations in Figures \ref{ch3:fig7} and \ref{ch3:fig8} are supported by the autocorrelation plots presented in Figure \ref{ch3:fig9}. The rate of decay of correlations in the $\delta$-chain in CA  appears to decrease as the dimension increases, and in particular for large $N$ the correlations seem to decay very slowly. On the contrary, the rate of decay of correlations in the $\delta$-chain in NCA appears not to be affected by the increase in dimension and is relatively close to the one in MA.

\begin{figure}[htp]
           \center{ \includegraphics[type=pdf, ext=.pdf, read=.pdf, width=0.3\columnwidth]{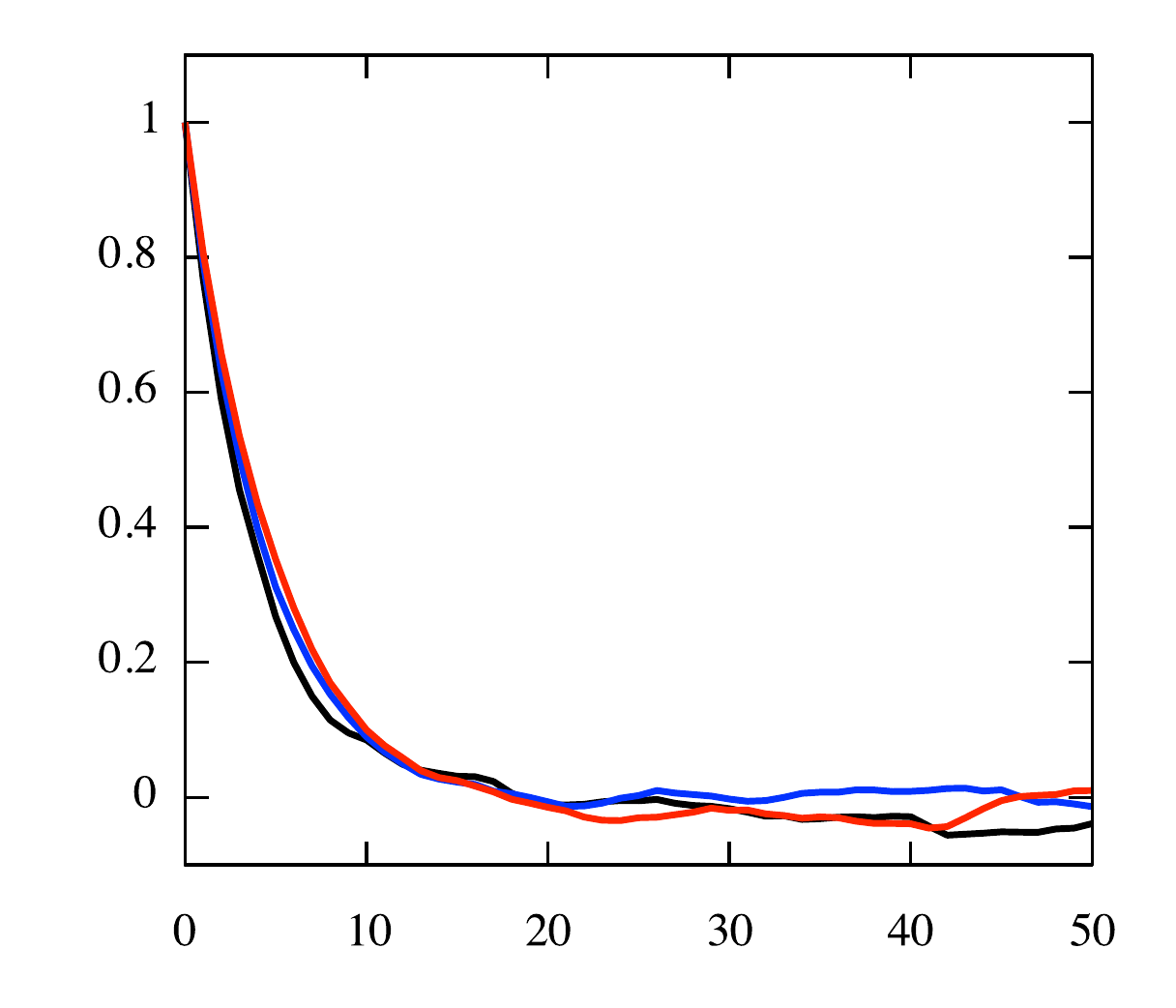}
            \includegraphics[type=pdf, ext=.pdf, read=.pdf, width=0.3\columnwidth]{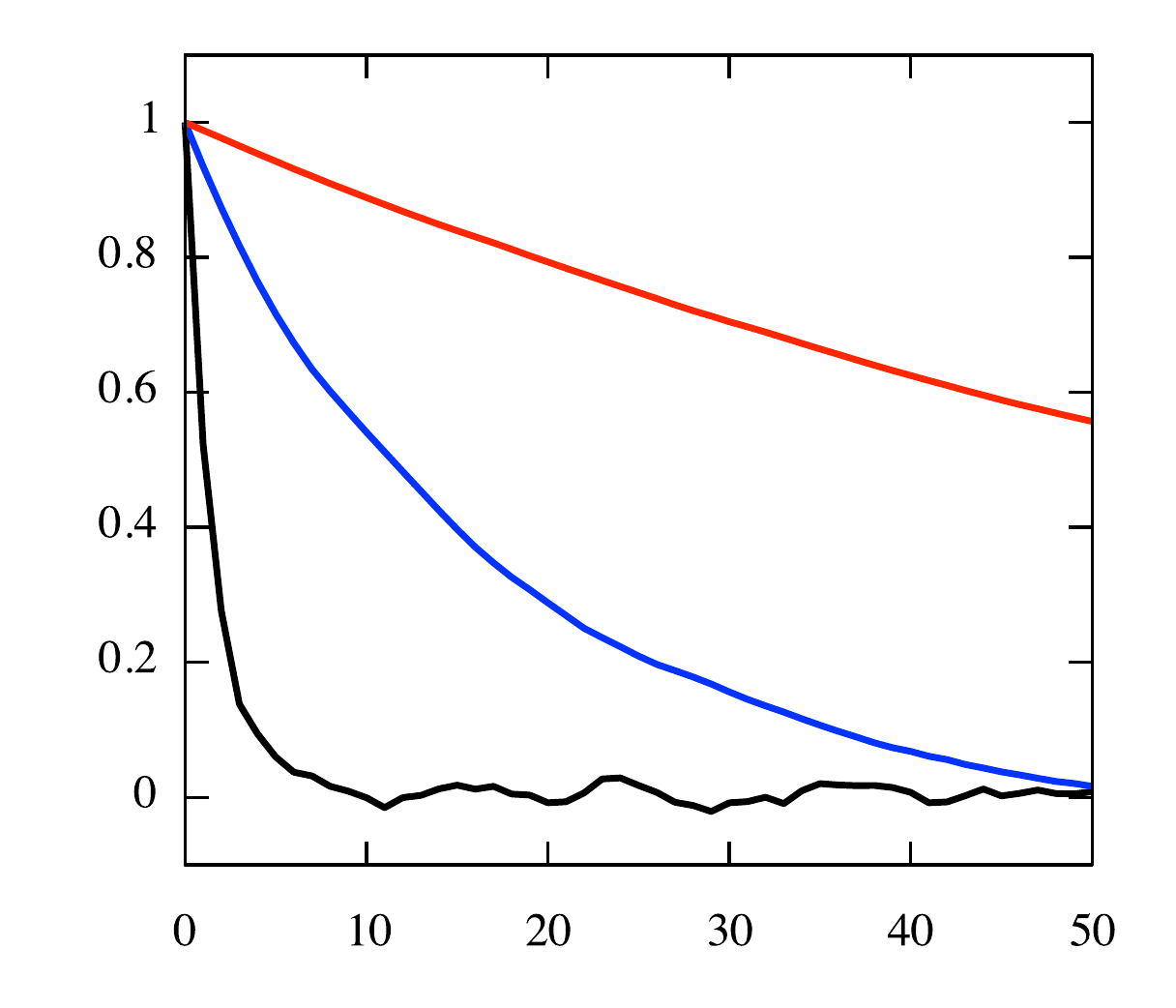}
             \includegraphics[type=pdf, ext=.pdf, read=.pdf, width=0.3\columnwidth]{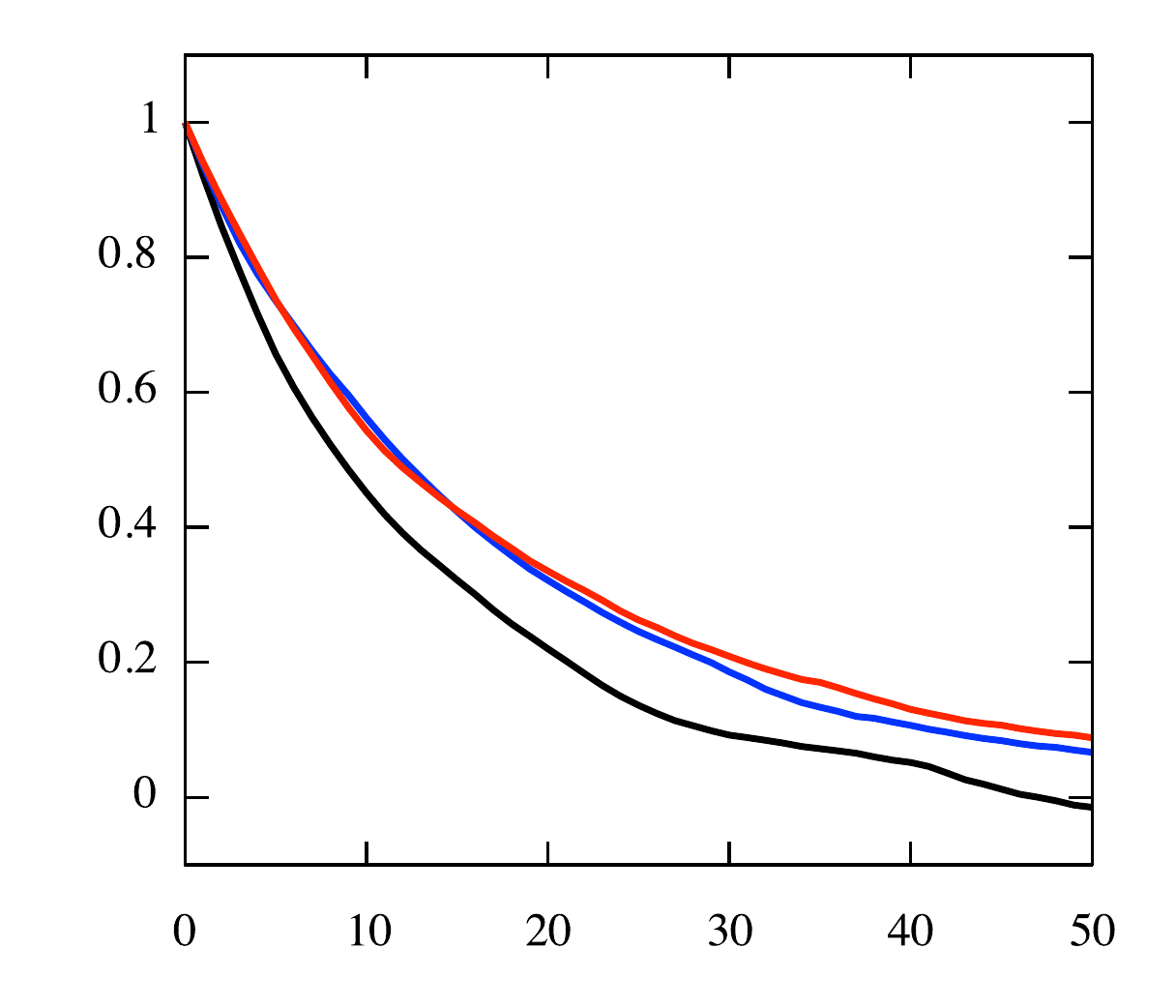}
           
           }   
            \caption{Autocorrelation functions of $\delta$-chain, dimensions 15 (black), 127 (red) and 1023 (blue); {left for MA, middle for CA, right for NCA}.}
            \label{ch3:fig9}       
\end{figure}

%%%%%%% CONCLUSIONS %%%%%%

\section{Conclusions}\label{ch3:sec:con}

We considered a hierarchical Bayesian approach to the function-space general inverse problem (\ref{ch3:eq:1}), with Gaussian priors on the unknown function $\bu$ which depend on a variance-scaling parameter $\delta$ also endowed with a prior. We studied the finite-dimensional implementation of this setup and in particular, examined the mixing properties of MwG algorithms for sampling the posterior, as the discretization level $N$ of the unknown increases. We provided measure-theoretic intuition suggesting that under natural assumptions on the underlying function space model, as $N$ increases, CA, which is the most natural algorithm in this setting, deteriorates (see section \ref{ch3:sec:int}). We then used this intuition to propose a reparametrization of the prior for which the resultant algorithm, NCA, is expected to be robust with respect to $N$. In the linear-conjugate setting we formulated rigorous theory which quantifies the deterioration of CA in the asymptotic regime of large $N$ (see section \ref{ch3:sec:main}). 

This theory holds under assumptions on the discrete level (Assumptions \ref{ch3:ass1}) which we expect to be inherited from our assumptions on the function-space model (Assumptions \ref{ch3:infass1}) when consistent discretizations are used.
Indeed, we provided three families of linear inverse problems satisfying our assumptions on the underlying infinite-dimensional model (section \ref{ch3:sec:ex}), and for two of them, which are families of mildly and severely ill-posed problems in a simultaneously diagonal setting, we also showed that a spectral truncation method based on the common eigenbasis satisfies our discrete level assumptions (subsections \ref{ch3:ssec:diag} and \ref{ch3:ssec:sev}). It would be interesting to show that discretization via finite differences of these examples also satisfies our discrete assumptions.

Our numerical results confirmed our theory on the deterioration of CA as well as our intuition about the robustness of NCA in the large $N$ limit. However, for NCA the $\delta$-chain slows down in the small noise limit. This is
 because even though $v$ and $\delta$ are a priori independent, they both
 need to explain the data, and this creates an increasingly severer
 constraint as $\hl$ becomes large. Hence, $\delta$ and $v$ concentrate
 near a lower dimensional manifold, where $\delta^{-\frac12} K v \approx y$, and the
 Gibbs sampler mixes poorly (see Figure \ref{ch3:fig5} for a numerical illustration of this effect in the example of subsection \ref{ch3:nex1}). Although MA is robust in both the large $N$ and the small noise limit, it can be prohibitively expensive for large scale inverse problems; new work is required to produce effective hierarchical algorithms in this small noise limit, when $N$ is large.
We have considered  the interweaving method of \cite{YM11},
which combines in each iteration centered and non-centered draws of
$\delta$,   and the  partially non-centered parametrizations of \cite{PRS03}, in which the 
prior is reparametrized as $u=\delta^{-\frac{t}2}v_t$ where
$v_t\sim\G(0,\delta^{t-1}\bC_0)$, for some $t\in[0,1]$. Our numerical
experimentation did not suggest significant benefits from their use,
hence we do not report them here, but further investigation of these issues would be of interest. 

\begin{figure}[htp]
           \center{           
            \includegraphics[type=pdf, ext=.pdf, read=.pdf, width=0.32\columnwidth]{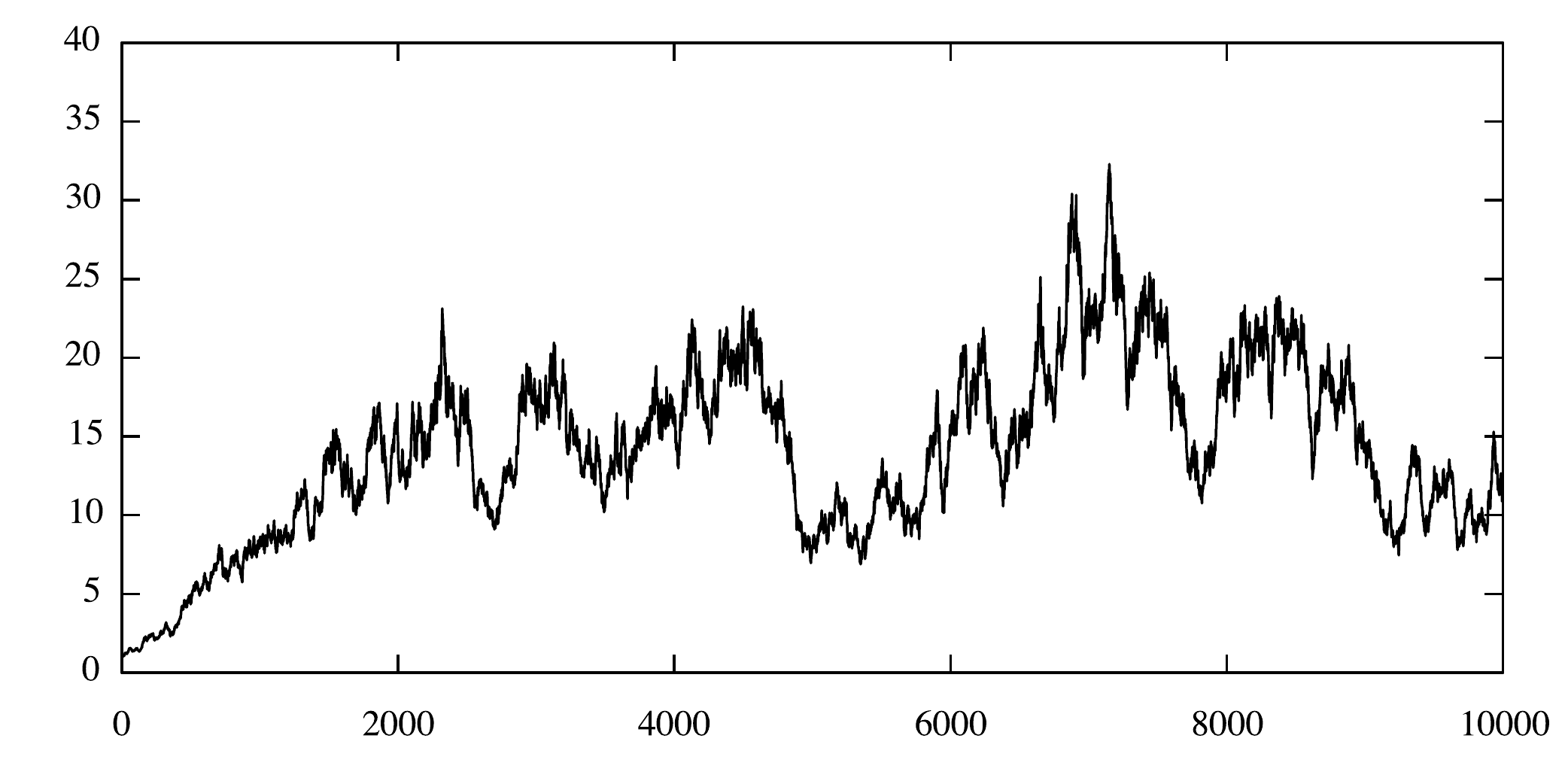}            
            \includegraphics[type=pdf, ext=.pdf, read=.pdf, width=0.32\columnwidth]{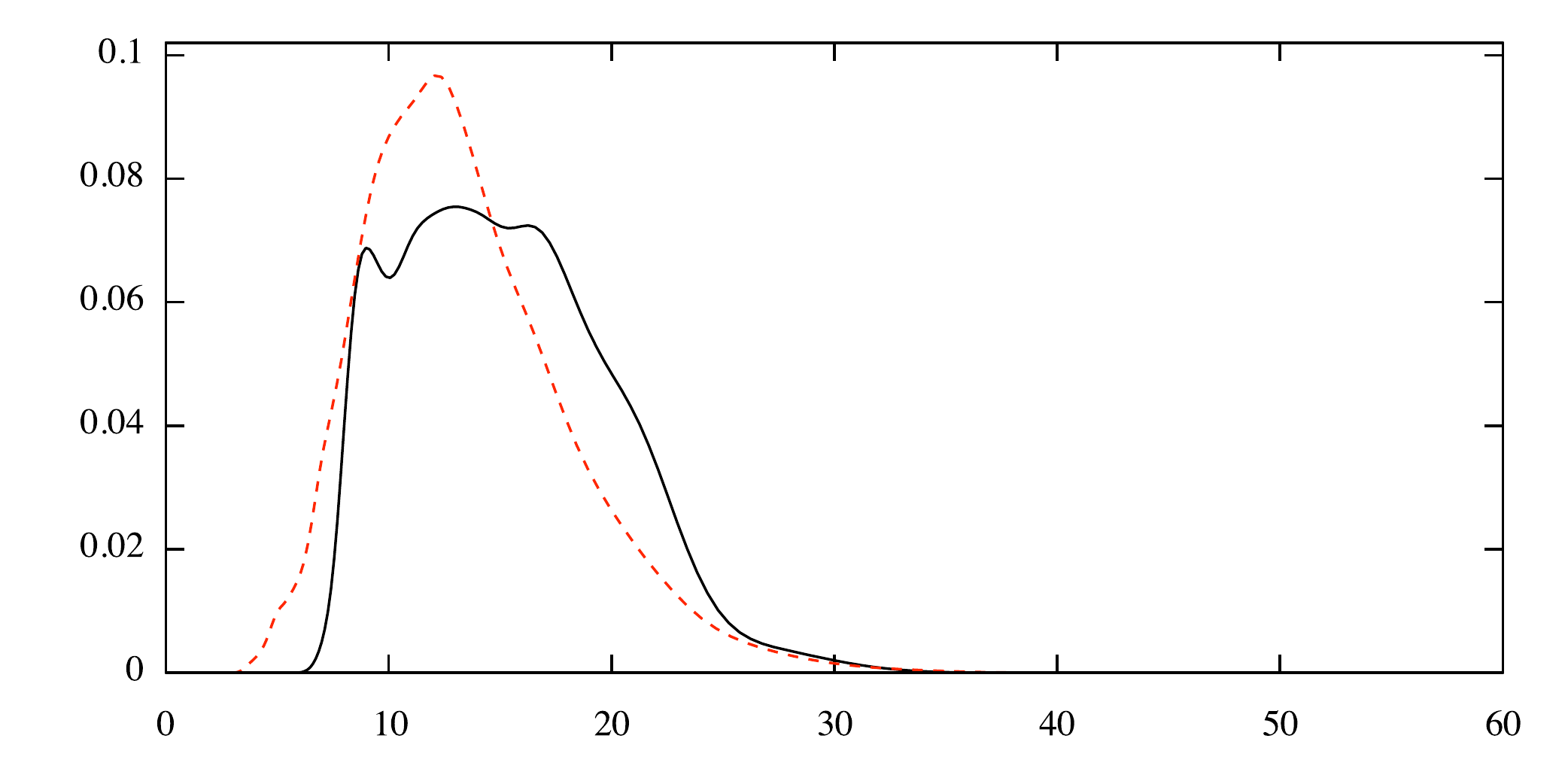}}          
            \caption{Signal in white noise model - NCA for small noise, $\hl=200^2$, and dimension $N=512$: $\delta$-chain (left) and kernel density estimate of posterior on $\delta$ (right, black). In dashed red in right plot is the density estimate using MA, considered as a gold standard.}    
            \label{ch3:fig5}
\end{figure}

In addition to \cite{JB12}, a similar hierarchical setup has been considered in \cite{SVZ13} in the \emph{signal in white noise} model (see subsection \ref{ch3:nex1}). The authors of \cite{SVZ13} study a different aspect of the problem, namely the asymptotic performance of the posterior distribution in the small noise limit. This is motivated by results on posterior consistency suggesting that the optimal rates of contraction are achieved by rescaling the prior depending on the size of the noise \cite{KVZ12, ALS13, KVZ13, ASZ12}. They also study an empirical Bayes method for estimating the value of the prior scaling from the data and show that both methods achieve optimal posterior contraction rates over a range of regularity classes of the true solution. However, we have seen in this paper that the implementation of the hierarchical Bayesian method in the large dimensional limit is problematic. On the other hand, while the empirical Bayes method is appealing because of the lack of mixing issues, it involves solving an optimization problem which in more complicated models can be computationally demanding, and it does not provide uncertainty quantification of the prior scaling which may be desirable. Again we highlight the need for more research and new ideas  in the small noise limit, when $N$ is large.

An asymptotic regime which we have not investigated yet, is the case where we have a sequence of $N$-dimensional linear inverse problems, with the relevant matrices being consistent discretizations of linear operators and where the size of the noise decreases as $N$ grows larger, that is $\lambda=\lambda(N)\to\infty$ as $N\to\infty$. This is the limit of an infinite dimensional unknown which is also identifiable from the data. Since in this regime, as $N$ grows larger the supports of both $\delta|y,u$ and $\delta|y$ shrink to zero, we expect that there will be an optimal relationship between $\lambda$ and $N$, for which CA will not deteriorate for large $N$.

Our theory on the slowing down of the $\delta$-chain can be extended
to cover nonlinear Gaussian-conjugate Bayesian inverse problems and in
particular the nonparametric drift estimation in SDE's setting
considered in \cite{ PPRS12, PSZ13, MSZ13}; see \cite[Chapter 4.5]{SA13}. Again the main result holds under assumptions on the discrete level which we expect to be inherited by consistent discretizations from natural assumptions on the underlying infinite-dimensional model which express that the posterior is absolutely continuous with respect to the prior.

Furthermore, our infinite-dimensional intuition extends to
hierarchical setups for inference on other hyper-parameters, for
instance the prior regularity parameter $\alpha$, where
$\C_0=\bA_0^{-\alpha}$, as studied in \cite{KSVZ12}. In Figure
\ref{ch3:fig11} we plot autocorrelation functions for the centered MwG
algorithm used in this setting and the corresponding version of
the non-centered algorithm;  as before we also
implemented the corresponding marginal algorithm and use it as the
gold standard. The underlying truth, the noise distribution and the
discretization method are the same as in subsection \ref{ch3:nex1} and we
use an exponential hyper-prior on $\alpha$. The idea is the same as the intuition presented in
section \ref{ch3:sec:int}, since in infinite dimensions two Gaussian
measures $\G(0,\Sigma_1)$ and $\G(0,\Sigma_2),$ where $\Sigma_1$ and
$\Sigma_2$ are simultaneously diagonalizable with eigenvalues
$\{j^{-{\alpha_1}}\}_{j\in\N}$ and $\{j^{-{\alpha_2}}\}_{j\in\N}$
respectively,  are mutually singular unless
$\alpha_1=\alpha_2$. Indeed, our numerical simulations confirm again
the deterioration of the centered algorithm and the robustness of the
non-centered algorithm, in the large $N$ limit. More generally, as
suggested in section \ref{ch3:sec:int}, our intuition holds for
inference on any prior on $\bu$ which depends on a  hyper-parameter
$\theta$, when it holds that $\bu|\by,\theta$ is absolutely continuous
with respect to $\bu|\theta$ almost surely with respect to the data,
while $\bu|\theta$ and $\bu|\theta'$ are mutually singular when
$\theta\neq\theta'$. 

\begin{figure}[htp]
           \center{ 
           \includegraphics[type=pdf, ext=.pdf, read=.pdf, width=0.3\columnwidth]{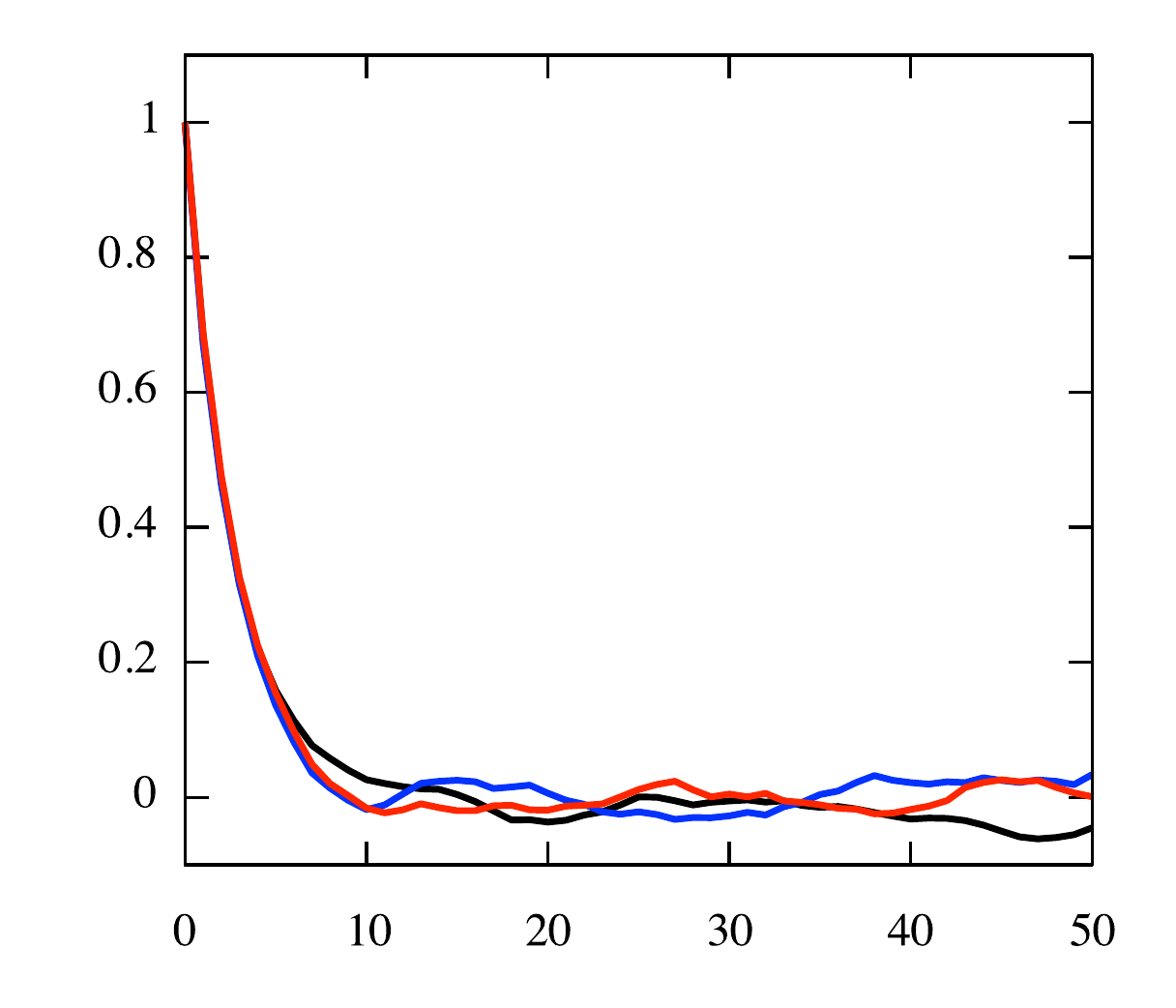}
            \includegraphics[type=pdf, ext=.pdf, read=.pdf, width=0.3\columnwidth]{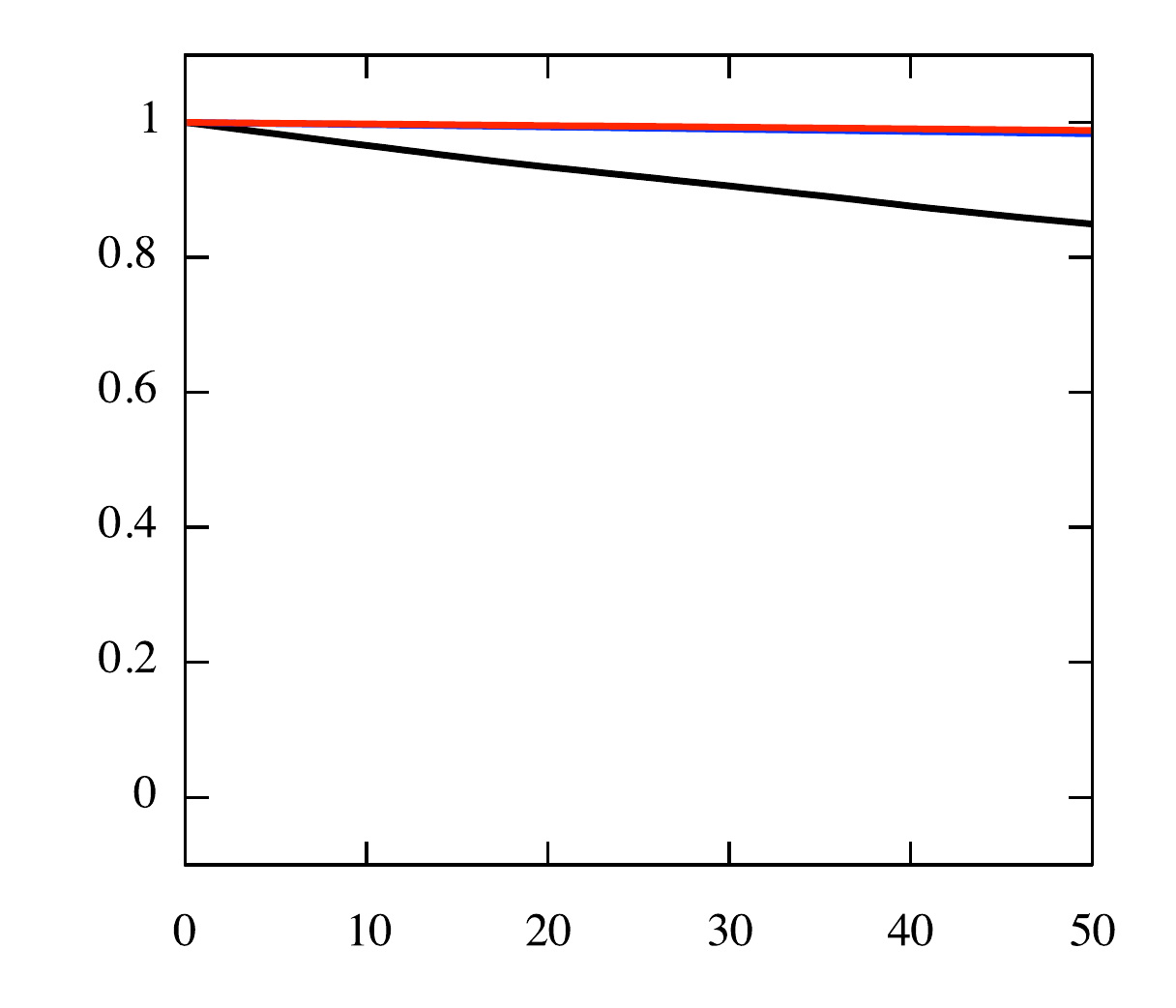}
             \includegraphics[type=pdf, ext=.pdf, read=.pdf, width=0.3\columnwidth]{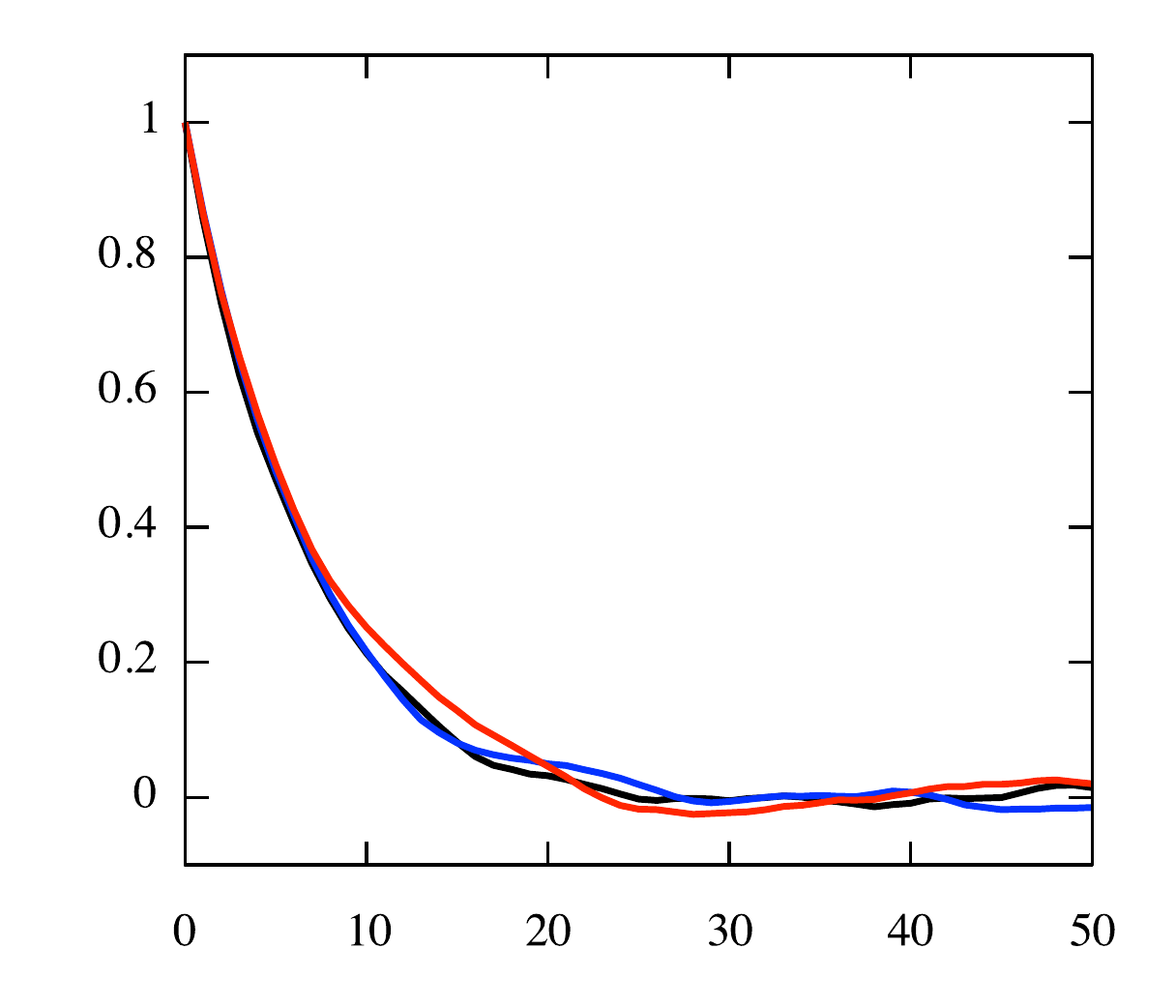}}   
            \caption{Autocorrelation functions of $\alpha$-chain, dimensions 32 (black), 512 (red) and 8192 (blue); left for marginal, middle for centered, right for non-centered.}
            \label{ch3:fig11}       
\end{figure}
Returning to the general nonlinear setting discussed in section
\ref{ch3:sec:int}, we note that both Algorithms \ref{ch3:algstd} and
\ref{ch3:algrep} are straightforward to generalize, however with a
certain loss of efficiency compared to the linear-conjugate
setting. The distribution of $\bu|\by,\delta$ no longer belongs to a known
parametric family of distributions, and thus has to be sampled
using a Metropolis-Hastings (for example one based on Langevin diffusion) step. 
Moreover, for nonlinear inverse problems there is no longer an easy way of
finding the marginal distribution  $\by|\delta$, hence MA will not be
an option. The so-called  \emph{pseudo-marginal algorithm}
\cite{AR09}, might be an alternative for non-linear problems, and has
been recently   employed to perform Bayesian inference using Gaussian
process priors in \cite{FG13}. An interesting research direction is
the comparison of the performance of the two MwG algorithms with the
pseudo-marginal algorithm in both the large $N$ and the small noise
limits. 

Finally, our research agenda includes the extension to the
hierarchical setting of the present paper, of the analysis contained
in \cite{CDS10} of the bias in the estimated posterior distribution
due to the discretization of the unknown and forward problem.

%%%%%%% APPENDIX %%%%%%
\section{Appendix}\label{ch3:sec:ap}
In this section we present the proof of Theorem \ref{ch3:thm1}, as well as several technical results and lemmas. Subsection \ref{ch3:ssec:dproof} contains the proof of Theorem \ref{ch3:thm1}, the backbone of which is Lemma \ref{ch3:lem1} proved in subsection \ref{ch3:ssec:ap1}. In subsection \ref{ch3:ssec:ap3} we state and prove a lemma on the negative moments of the rate parameter in the $\delta$ draw
(\ref{ch3:eq:dd}), which allows us to control the lower order terms arising in the proof of Theorem \ref{ch3:thm1}. Finally, in subsection \ref{ch3:ssec:ap4}, we prove several  probability and linear algebra lemmas, which are useful in our analysis.
\subsection{Proof of Theorem \ref{ch3:thm1}}\label{ch3:ssec:dproof}
We now prove Theorem \ref{ch3:thm1} under Assumptions \ref{ch3:ass1}.
Using the scaling property of the gamma distribution, $\Ga(\upalpha,\upbeta)\stackrel{\mathcal{L}}{=}
\upbeta^{-1}\Ga(\upalpha,1)$, and multiplying and dividing by $\frac{2}N\delta$, we can write the $\de{k+1}
_N$ draw in (\ref{ch3:eq:dd}) as \begin{align}\label{ch3:eq:dd1}\de{k+1}_N&\stackrel{\mathcal{L}}{=}\delta
\frac{\Gamma_{0,N}}{\frac2N\delta(\bd+\frac12\norm{\C_0^{-\frac12}u_{\delta}^{(k)}}^2_{\R^N})}
\end{align}
where $\Gamma_{0,N}\sim\Ga(\ad+\frac{N}2,\frac{N}2)$ is independent of ${\yn}$ and $u_{\delta}^{(k)}$.

Defining $
W_{2,N}:=\frac{\Gamma_{0,N}-1-\frac{2\ad}N}{\sqrt{\frac{2}N+\frac{4\ad}{N^2}}}$, we have 
\begin{align*}\Gamma_{0,N}=1+\frac{2\ad}N+\sqrt{\frac{2}N+\frac{4\ad}{N^2}}W_{2,N},\end{align*} where 
for every $N$, the random variable $W_{2,N}$ has mean zero and variance one, third and fourth moments 
bounded uniformly in $N$ (see Lemma \ref{ch3:lemgam}), and is independent of the data ${\yn}
$ and $\zeta$, the Gaussian white noise expressing the fluctuation in $u^{(k)}_{\delta}$. Concatenating we get 
\begin{align}\label{ch3:eq:sp1}\de{k+1}_N\stackrel{\mathcal{L}}{=}
\delta\frac{1+\frac{2\ad}N+\sqrt{\frac{2}N+\frac{4\ad}{N^2}}W_{2,N}}{1+\sqrt{\frac2N}W_{1,N}+\frac{2}
NF_N(\delta)\delta},\end{align} and we are now ready to prove Theorem \ref{ch3:thm1}:

\begin{proof}
By the independence of $W_{2,N}$ and $\zeta$ and since $\E[W_{2,N}]=0$, we have 
\begin{align*}\E[\de{k+1}_N-\de{k}_N|\de{k}_N=\delta]&=\delta\E\left[\frac{1+\frac{2\ad}{N}+\sqrt{\frac2N+
\frac{4\ad}{N^2}}W_{2,N}}{1+\sqrt{\frac2N}W_{1,N}+\frac{2F_N\delta}N}-1\right]\\
%&=\delta\E^\zeta\left[\frac{1+\frac{2\ad}N}{1+\sqrt{\frac2N}W_{1,N}+\frac{2F_N\delta}N}-1\right]\\
%can delete to save space
&=\delta\E^\zeta\left[\frac{\frac{2\ad}N-\sqrt{\frac2N}W_{1,N}-\frac{2F_N\delta}N}{1+\sqrt{\frac2N}W_{1,N}+
\frac{2F_N\delta}N}\right].
\end{align*} Using the identity $\frac{1}{1+x}=1-x+\frac{x^2}{1+x}$ we get
\begin{align*}\E&[\de{k+1}_N-\de{k}_N|\de{k}_N=\delta]\\=&\delta\E^\zeta\left[\left(\frac{2(\ad-F_N\delta)}N-
\sqrt{\frac2N}W_{1,N}\right)\left(1-\sqrt{\frac2N}W_{1,N}-\frac{2F_N\delta}N\right)\right]+\E^\zeta[e_{1,N}],
\end{align*}
where \begin{align*}e_{1,N}=\delta\frac{\left(\frac{2(\ad-F_N\delta)}N-\sqrt{\frac2N}W_{1,N}\right)
\left(\frac{2W_{1,N}^2}N+\frac{4F_N^2\delta^2}{N^2}+\frac{4\sqrt{2}F_NW_{1,N}\delta}{N^\frac32}\right)}
{1+\sqrt{\frac2N}W_{1,N}+\frac{2F_N\delta}N}.\end{align*}

Using H\"older's inequality and the fact that $F_N$ and $W_{1,N}$ have moments of all positive orders 
which are bounded uniformly in $N$, we get  
\begin{align*}\E[\de{k+1}_N-\de{k}_N|\de{k}_N=\delta]&=\frac{2}N\left((\ad+1)\delta-\E^\zeta[F_N]
\delta^2\right)+\mathcal{O}(N^{-\frac32})+\E^\zeta[e_{1,N}],
\end{align*}
almost surely with respect to $\by$.
For the residual $e_{1,N}$, by Cauchy-Schwarz inequality and (\ref{ch3:eq:denom}), we have 
\begin{align*}
\E^\zeta&[e_{1,N}]=\E^\zeta\bigg[\frac{\left(\frac{2(\ad-F_N\delta)}N-\sqrt{\frac2N}W_{1,N}\right)
\left(W_{1,N}^2+\frac2{N}F_N^2\delta^2+\frac{2\sqrt{2}}{N^\frac12}F_NW_{1,N}\delta\right)}{\frac{N}
{2\delta}(1+\sqrt{\frac2N}W_{1,N}+\frac{2F_N\delta}N)}\bigg]\\
&\leq\bigg(\E\Big[\left(\frac{2(\ad-F_N\delta)}N-\sqrt{\frac2N}W_{1,N}\right)^2\left(W_{1,N}^2+
\frac{2F_N^2\delta^2}N+\frac{2\sqrt{2}F_NW_{1,N}\delta}{N^\frac12}\right)^2\Big]\bigg)^\frac12\\
&\quad\;.\bigg(\E\Big[(\bd+\frac12\norm{\C_0^{-\frac12}u^{(k)}_\delta}^2_{\R^N})^{-2}\Big]\bigg)^\frac12.
\end{align*}
The square root of the first expectation on the right hand side of the inequality is of order $N^{-\frac12}$, 
while by Lemma \ref{ch3:lemdres} the square root of the second expectation is of order $N^{-1}$ for almost 
all $\by$. Combining we get that $\E^\zeta[e_{1,N}]=\mathcal{O}(N^{-\frac32})$, almost surely with respect to 
$\by,$ hence \begin{align*}\E[\de{k+1}_N-\de{k}_N|\de{k}_N=\delta]&=\frac2N\left((1+\ad)\delta-\E^\zeta[F_N]
\delta^2\right)+\mathcal{O}(N^{-\frac32}),
\end{align*}$\by$-almost surely.

For the variance of the step, we have %\begin{align*}\E&\left[\left(\de{k+1}_N-\de{k}_N-\E\left[\de{k+1}_N-
%\de{k}_N|\de{k}_N=\delta\right]\right)^2|\de{k}_N=\delta\right]
\begin{align*}\Var\left[\de{k+1}_N-\de{k}_N|\de{k}_N=\delta\right]=&\E\left[(\de{k+1}_N-\de{k}_N)^2|\de{k}_N=\delta\right]-\E\left[\de{k+1}_N-\de{k}_N|\de{k}_N=\delta
\right]^2,
\end{align*}
where by the first part of the proof the second term is $\mathcal{O}(N^{-2})$. 
Thus, we need only consider
the first term, which will be shown to be $\mathcal{O}(N^{-1})$.
By equation (\ref{ch3:eq:sp1}) we have

\begin{align*}\E\left[(\de{k+1}_N-\de{k}_N)^2|\de{k}_N=\delta\right]&=\delta^2\E\left[\left(\frac{\frac{2\ad}
N+\sqrt{\frac2N+\frac{4\ad}{N^2}}W_{2,N}-\sqrt{\frac2N}W_{1,N}-\frac{2F_N\delta}N}{1+\sqrt{\frac2N}
W_{1,N}+\frac{2F_N\delta}N}\right)^2\right]\\
&=\delta^2\E\left[\frac{\frac{2W_{2,N}^2}N+\frac{2W_{1,N}^2}N+\frac{V_N}{N^\frac32}}{\left(1+\sqrt{\frac2N}
W_{1,N}+\frac{2F_N\delta}N\right)^2}\right],
\end{align*}%NOTE THAT THE TERM $W_{1,N}W_{2,N}$ which is slower is killed by the expectation by independence
where the random variable $V_N$ depends only on $W_{1,N}$ %\mods{I THINK IT DOES NOT DEPEND ON $W_{2,N}$ because $W_{2,N}$ product with anything else is killed by the expectation, actually there is a term $N^{-2}W_{2,N}^2$ which when taking expectation gives 1} 
and $F_N$ and has higher order moments which are bounded uniformly in $N$, $\by$-almost surely (the 
dependence on $W_{2,N}$ disappears by the independence of $W_{2,N}$ and $\zeta$ and the fact that 
both $W_{2,N}$ has mean zero and variance one). Using the identity $\frac{1}{(1+x)^2}=1-2x+
\frac{3x^2+2x^3}{(1+x)^2}$, we get 
\begin{align*}\E&\left[(\de{k+1}_N-\de{k}_N)^2|\de{k}_N=\delta\right]\\=&\delta^2\E\left[\left(\frac{2W_{2,N}
^2}N+\frac{2W_{1,N}^2}N+\frac{V_N}{N^\frac32}\right)\left(1-2\sqrt{\frac2N}W_{1,N}-\frac4NF_N\delta\right)
\right]+\E[e_{2,N}],
\end{align*}
where \begin{align*}e_{2,N}&=\delta^2\left(\frac{2W_{2,N}^2}N+\frac{2W_{1,N}^2}N+\frac{V_N}{N^
\frac32}\right)\frac{3\left(\sqrt{\frac2N}W_{1,N}+\frac{2F_N\delta}N\right)^2+2\left(\sqrt{\frac2N}W_{1,N}+
\frac{2F_N\delta}N\right)^3}{\left(1+\sqrt{\frac2N}W_{1,N}+\frac{2F_N\delta}N\right)^2}\\
&:=\frac{E_N\delta^2}{\left(1+\sqrt{\frac2N}W_{1,N}+\frac{2F_N\delta}N\right)^2}.\end{align*}
Using the fact that $\by$-almost surely $W_{1,N}$, $F_N$ and $V_N$ have moments of all positive orders 
which are bounded uniformly in $N$, by H\"older inequality (we do not need to consider higher order 
moments for $W_{2,N}$ here, because it is independent with $W_{1,N}$ and $F_N,$ hence bounding 
terms involving $W_{2,N}$ does not require the use of H\"older's inequality which needs higher moments), 
we get that \begin{align*}
\E[(\de{k+1}_N-\de{k}_N)^2|\de{k}_N=\delta]&=\frac{2\delta^2}{N}\left(\E[W_{2,N}^2]+\E[W_{1,N}^2]\right)+
\mathcal{O}(N^{-\frac32})+\E[e_{2,N}],
\end{align*}
$\by$-almost surely.
For the residual $e_{2,N}$,  as before using Cauchy-Schwarz inequality and (\ref{ch3:eq:denom}), 
\begin{align*}
\E[e_{2,N}]&\leq\frac{N^{2}}4\big(\E[E_N^2]\big)^\frac12\bigg(\E[(\bd+\frac12\norm{\C_0^{-\frac12}u^{(k)}_
\delta}^2_{\R^N})^{-4}]\bigg)^\frac12.\end{align*}
Since by Lemma \ref{ch3:lemgam} the first four moments of $W_{2,N}$ are also bounded uniformly in $N$, 
the square root of the first expectation on the right hand side is of order $N^{-2}$, while by Lemma 
\ref{ch3:lemdres} the square root of the second expectation is of order $N^{-2}$, for almost all $\by$. 
Combining we get $\E^\zeta[e_{2,N}]=\mathcal{O}(N^{-2})$, almost surely with respect to $\by$, hence since 
$\E[W_{1,N}^2]=\E[W_{2,N}^2]=1$,
\begin{align*}
\E[(\de{k+1}_N-\de{k}_N)^2|\de{k}_N=\delta]
&=\frac{4\delta^2}{N}+\mathcal{O}(N^{-\frac32}),
\end{align*}$\by$-almost surely.
Concatenating, we get the result. 
\end{proof}

\subsection{Proof of Lemma \ref{ch3:lem1}}\label{ch3:ssec:ap1} 
\begin{proof}
Let $\{e_j\}_{j=1}^N$ be any orthonormal basis of $\R^N$ (with respect to the possibly scaled norm $\smnorm{\cdot}_{\R^N}$) and for any $w\in\R^N$ write $w_j:=\pr{w}{e_j}_{\R^N}$. We then have that $\zeta=\sumn\zeta_je_j$ where $\{\zeta_j\}_{j=1}^N$  is a sequence of independent standard normal random variables.
Using (\ref{ch3:eq:uu}) we have,
\begin{align*}\norm{\C_0^{-\frac12}u^{(k)}_\delta}_{\R^N}^2&=\norm{\C_0^{-\frac12}m_{\lambda,\delta}({\yn})}_{\R^N}^2+\norm{\C_0^{-\frac12}\C_{\lambda,\delta}^\frac12\zeta}^2_{\R^N}+2\pr{\C_0^{-\frac12}m_{\lambda,\delta}({\yn})}{\C_0^{-\frac12}\C_{\lambda,\delta}^\frac12\zeta}_{\R^N}\\
&:=A_N+B_N+C_N.\end{align*}
Under Assumptions \ref{ch3:ass1}, we can analyze each term as follows:
\begin{enumerate}
\item[A)] by Assumption \ref{ch3:ass1}(i), for almost all data $\by$,  this term and all its positive integer powers are bounded uniformly in $N$. 
\item[B)] the second term can be written as\begin{align*}\norm{\C_0^{-\frac12}\C_{\lambda,\delta}^\frac12\zeta}_{\R^N}^2&=\pr{\C_0^{-\frac12}\C_{\lambda,\delta}^\frac12\zeta}{\C_0^{-\frac12}\C_{\lambda,\delta}^\frac12\zeta}_{\R^N}=\pr{\C_{\lambda,\delta}^\frac12\C_0^{-1}\C_{\lambda,\delta}^\frac12\zeta}{\zeta}_{\R^N}\\&=\delta^{-1}\pr{\C_{\lambda,\delta}^\frac12(\C_{\lambda,\delta}^{-1}-\lambda K^\ast \C_1^{-1} K)\C_{\lambda,\delta}^\frac12\zeta}{\zeta}_{\R^N}
%&=\delta^{-1}\pr{(I-\lambda\C_{\lambda,\delta}^\frac12K^\ast \C_1^{-1} K\C_{\lambda,\delta}^\frac12)\zeta}{\zeta}_{\R^N}\\
=\delta^{-1}\norm{\zeta}_{\R^N}^2-\delta^{-1}\lambda\norm{\C_1^{-\frac12}K\C_{\lambda,\delta}^\frac12\zeta}^2_{\R^N}\\&
:=b_{1,N}-b_{2,N},
\end{align*}
where
\begin{enumerate}
\item[b1)] $b_{1,N}=\delta^{-1}\norm{\zeta}^2_{\R^N}=\frac{N}{\delta}+\frac{1}{\delta}\sumn(\zeta_j^2-1):=\frac{N}\delta+\frac{\sqrt{2N}}{\delta}W_{1,N},$ where as $N\to\infty$, $W_{1,N}=\frac1{\sqrt{2N}}\sumn(\zeta_j^2-1)$ converges weakly to a standard normal random variable by the Central Limit Theorem and by Lemma \ref{ch3:lemmom} has all positive integer moments  bounded uniformly in $N;$%:=\frac{N}\delta+W_N,\end{align*} 
%where $\E^\zeta[W_N]=0$ and $\E^\zeta[W_N^2/N]=\frac2{\delta^2};$
\item[b2)] for $b_{2,N}$ we have by Lemma \ref{ch3:asslem1}(ii) that $\E^{\zeta}[b_{2,N}]$ is uniformly bounded in $N$. In fact using Lemma \ref{ch3:kollem} together with Lemma \ref{ch3:asslem1}(ii), we get that for any $p\in\N$, $\E^\zeta[b_{2,N}^p]$ is bounded independently of $N$.

%Hence in the limit $N\to\infty$ this term has expectation which is $\mathcal{O}(1)$, which arises in the drift of the limiting SDE for $\delta$.
\end{enumerate}
\item[C)] for the third term we have 
\begin{align*}
\pr{\C_0^{-\frac12}m_{\lambda,\delta}({\yn})}{\C_0^{-\frac12}\C_{\lambda,\delta}^\frac12\zeta}_{\R^N}&=\pr{(\C_0^{-\frac12}\C_{\lambda,\delta}^\frac12)^\ast\C_0^{-\frac12}m_{\lambda,\delta}({\yn})}{\zeta}_{\R^N}=\sumn((\C_0^{-\frac12}\C_{\lambda,\delta}^\frac12)^\ast\C_0^{-\frac12}m_{\lambda,\delta}({\yn}))_j\zeta_j.\end{align*}
 It holds that 
\begin{align*}
\sumn((\C_0^{-\frac12}\C_{\lambda,\delta}^\frac12)^\ast \C_0^{-\frac12}m_{\lambda,\delta}({\yn}))_j^2=\norm{(\C_0^{-\frac12}\C_{\lambda,\delta}^\frac12)^\ast \C_0^{-\frac12}m_{\lambda,\delta}({\yn})}^2_{\R^N},
\end{align*}
and we claim that the norm on the right hand side is uniformly bounded in $N$ $\by$-almost surely. Indeed, by (\ref{ch3:eq:prec}), the Cauchy-Schwarz inequality and the non-negative definiteness of the matrix $\C_{0}^{\frac12}K^\ast \C_1^{-1} K\C_{0}^\frac12$, we have\begin{align*}
\norm{(\C_{0}^{-\frac12}\C_{\lambda,\delta}^\frac12)^\ast u}^2_{\R^N}&=\pr{\C_{0}^{-\frac12}\C_{\lambda,\delta}\C_{0}^{-\frac12}u}{u}_{\R^N}=\pr{\delta^{-1}(I+\frac{\lambda}{\delta}\C_{0}^{\frac12}K^\ast \C_1^{-1} K\C_{0}^\frac12)^{-1}u}{u}_{\R^N}\\
&\leq\norm{\delta^{-1}(I+\frac{\lambda}{\delta}\C_{0}^{\frac12}K^\ast \C_1^{-1} K\C_{0}^\frac12)^{-1}u}_{\R^N}\norm{u}_{\R^N}\leq\delta^{-1}\norm{u}^2_{\R^N}.
\end{align*}
Combining with Assumption \ref{ch3:ass1}(i) we get the claim and therefore
 by Lemma \ref{ch3:sumlem} below we get that the third term has $\by$-almost surely all even moments uniformly bounded in $N$. 
\end{enumerate}
We define $F_N=\bd+\frac{A_N-b_{2,N}+C_N}2$ and observe that since all terms have bounded moments of every order uniformly in $N$ $\by$-almost surely, H\"older's inequality secures that $F_N$ also has bounded moments of every order uniformly in $N$ almost surely with respect to $\by$. \end{proof}

\subsection{Negative moments of the rate parameter in the $\delta$ draw}\label{ch3:ssec:ap3}{\ }

\begin{lemma}\label{ch3:lemdres}
Let $u^{(k)}_\delta$ be as in (\ref{ch3:eq:uu}), for any $\delta, \lambda>0$. Under Assumptions \ref{ch3:ass1}, we have  \begin{align*}
\E^\zeta\bigg[(\bd+\frac12\norm{\C_0^{-\frac12}u_\delta^{(k)}}_{\R^N}^2)^{-2i}\bigg]=\mathcal{O}(N^{-2i}),
\end{align*}
as $N\to\infty$, almost surely with respect to $\by$, for $i=1,2$.
\end{lemma}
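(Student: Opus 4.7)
The plan is to exploit Lemma \ref{ch3:lem1} to factor the quantity of interest as
$$\bd + \tfrac12\|\C_0^{-\frac12}u_\delta^{(k)}\|_{\R^N}^2 \;=\; \frac{N}{2\delta}\,R_N,\qquad R_N := 1 + \sqrt{\tfrac{2}{N}}\,W_{1,N} + \frac{2\delta F_N(\delta)}{N}.$$
Since the prefactor contributes exactly the claimed $(2\delta/N)^{2i}\asymp N^{-2i}$ rate, the task reduces to showing that $\E^\zeta[R_N^{-2i}]$ is bounded uniformly in $N$, $\by$-almost surely.

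The main obstacle is that $R_N$ is not bounded below by a deterministic positive constant: on rare events where $W_{1,N}$ or $F_N(\delta)$ are anomalously large and negative, $R_N$ can be arbitrarily small, so $R_N^{-2i}$ cannot be controlled pointwise. I would handle this by splitting on the event $A := \{R_N \geq 1/2\}$. On $A$, the integrand is bounded by $2^{2i}$. On $A^c$, I would fall back on the deterministic bound $\bd + \tfrac12\|\C_0^{-\frac12} u_\delta^{(k)}\|_{\R^N}^2 \geq \bd$, which forces $R_N \geq 2\delta \bd/N$ and hence $R_N^{-2i} \leq (N/(2\delta \bd))^{2i} = O(N^{2i})$.

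To offset this polynomial blow-up it is necessary to bound $\rp^\zeta(A^c)$ by an arbitrarily high negative power of $N$. By the triangle inequality,
$$A^c \;\subset\; \left\{|W_{1,N}| > \tfrac{1}{4}\sqrt{N/2}\right\} \cup \left\{|F_N(\delta)| > N/(8\delta)\right\},$$
and Markov's inequality at moment $p$, combined with the uniform-in-$N$ moment bounds on $W_{1,N}$ and $F_N(\delta)$ supplied by Lemma \ref{ch3:lem1}, yields $\rp^\zeta(A^c) = O(N^{-p/2})$ for every $p\in\N$, $\by$-almost surely. Choosing $p > 4i$ gives $\E^\zeta[R_N^{-2i}\1_{A^c}] = O(N^{2i-p/2}) = o(1)$, and combining with the trivial bound on $A$ and the overall prefactor $(2\delta/N)^{2i}$ completes the proof.

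I expect the only subtlety to be bookkeeping around the fact that $F_N(\delta)$ and $W_{1,N}$ share the same source of randomness (the Gaussian white noise $\zeta$); however, the union-bound step only requires marginal tail estimates for each variable, so no independence is needed and the high-moment control from Lemma \ref{ch3:lem1} suffices.
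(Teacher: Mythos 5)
Your decomposition-plus-truncation argument is sound for the case actually used in the paper, and it is a genuinely different route from the paper's proof. The paper does not pass through Lemma \ref{ch3:lem1} at all: it first shows, via Lemma \ref{ch3:asslem1}(iii), that $\norm{\C_0^{-\frac12}u^{(k)}_\delta}^2_{\R^N}\geq c^{-1}Y_N$ where $Y_N:=\norm{\C_{\lambda,\delta}^{-\frac12}m_{\lambda,\delta}({\yn})+\zeta}^2_{\R^N}$, identifies $Y_N$ as a non-central chi-squared random variable with $N$ degrees of freedom, and then computes its negative moments exactly through the integral representation $\E[Y_N^{-l}]=\Gamma(l)^{-1}\int_0^\infty t^{l-1}M_{Y_N}(-t)\,dt$, which yields $\mathcal{O}(N^{-2i})$ for $N>4i$. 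Your approach buys generality and reusability --- it needs only the first- and second-order decomposition of the rate parameter and the uniform-in-$N$ moment bounds from Lemma \ref{ch3:lem1}, with no distributional identification of the squared norm --- at the cost of being a soft asymptotic argument with large, unquantified constants. The paper's approach gives an explicit non-asymptotic bound and, crucially, quantifies how much mass the squared norm places near zero.

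That last point is where your argument has a genuine limitation you should acknowledge: it relies essentially on $\bd>0$. On $A^c$ your only lower control on $R_N$ is the deterministic floor $R_N\geq 2\delta\bd/N$, whose contribution degenerates as $\bd\to0$; for $\bd=0$ the argument collapses outright, because a Markov/union bound on $\rp^\zeta(R_N<\tfrac12)$ gives no information about how $R_N$ is distributed \emph{near} $0$, which is exactly what governs $\E^\zeta[R_N^{-2i}\1_{A^c}]$ when there is no deterministic floor. The paper's proof explicitly reduces to the hardest case $\bd=0$ and handles it, precisely because the non-central chi-squared density vanishes like $t^{N/2-1}$ at the origin --- a small-ball estimate your moment bounds cannot supply. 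Since the gamma hyper-prior must be proper, $\bd>0$ throughout the paper and your proof covers what is needed; but you should state the hypothesis $\bd>0$ explicitly and note that your implied constants depend on $\bd^{-2i}$, whereas the paper's do not.
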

\begin{proof}
Without loss of generality we consider the case $\delta=\lambda=1$ and drop the $\lambda$ and $\delta$ dependence in $u, m$ and $\C$. To de-clutter our notation we also drop the dependence of $m$ on the data.
Since $\bd\geq0$ it suffices to show it for $\bd=0$. Formally, the random variable $\norm{\C_0^{-\frac12}u^{(k)}}_{\R^N}^2$ behaves like a chi-squared random variable with $N$ degrees of freedom. We estimate the squared norm by a random variable $Y_N$ of known moment generating function $M_{Y_N}(t)$,  and use the following formula from \cite{CDF81} for the calculation of negative moments of nonnegative random variables
\begin{align}\label{ch3:eq:nm}
\E[Y_N^{-l}]=\Gamma(l)^{-1}\int_0^\infty t^{l-1}M_{Y_N}(-t)dt, \; l\in\N.
\end{align}

We begin by showing that there exists a constant $c>0$ independent of $N$ such that $\norm{\C^{-\frac12}\C_0^\frac12v}_{\R^N}\leq c\norm{v}_{\R^N}$ for any $v\in\R^N$. We have,
\begin{align*}
\norm{\C^{-\frac12}\C_0^\frac12v}_{\R^N}^2&=\pr{\C_0^\frac12\C^{-1}\C_0^\frac12v}{v}_{\R^N}=\pr{(I+\C_0^\frac12K^\ast \C_1^{-1} K\C_0^\frac12)v}{v}_{\R^N}\\&=\norm{v}_{\R^N}^2+\norm{\C_1^{-\frac12}K\C_0^\frac12v}^2_{\R^N}\leq(1+c_2)\norm{v}_{\R^N}^2,
\end{align*}
by Lemma \ref{ch3:asslem1}(iii). The proved claim gives the estimate  \begin{align*}
\norm{\C_0^{-\frac12}u^{(k)}}^2_{\R^N}&=\norm{\C_0^{-\frac12}(m+\C^\frac12\zeta)}^2_{\R^N}=\norm{\C_0^{-\frac12}\C^\frac12(\C^{-\frac12}m+\zeta)}_{\R^N}^2\geq c^{-1}\norm{\C^{-\frac12}m+\zeta}^2_{\R^N},
\end{align*}
hence it suffices to show that almost surely with respect to $\by$ we have $\E^\zeta[Y_N^{-2i}]=\mathcal{O}(N^{-2i})$, for $Y_N:=\norm{\C^{-\frac12}m+\zeta}^2_{\R^N}$.
Indeed, let $\{e_j\}_{j=1}^N$ be any orthonormal basis of $\R^N$ (with respect to the possibly scaled norm $\smnorm{\cdot}_{\R^N}$), and define $w_j:=\pr{w}{e_j}$ for any $w\in\R^N$. Then we have  \begin{align*}
Y_N=\sumn((\C^{-\frac12}m)_j+\zeta_j)^2,
\end{align*}
where $\zeta_j\sim\G(0,1)$ are the mutually independent components of the white noise $\zeta$ and $(\C^{-\frac12}m)_j$ are independent of $\zeta$, therefore $Y_N$ is a non-central chi-squared random variable with $N$ degrees of freedom and non-centrality parameter $p_N:=\sumn (\C^{-\frac12}m)_j^2\geq0$.
The definition and properties of the non-central chi-squared distribution can be found in \cite{JKB95}, where in particular, we find the moment generating function of $Y_N$ 
\begin{align*}
M_{Y_N}(t)=(1-2t)^{-\frac{N}2}\exp\big(\frac{p_Nt}{1-2t}\big),
\end{align*}
hence using (\ref{ch3:eq:nm}) we have for $i=1,2$, \begin{align*}
\E^\zeta[Y_N^{-2i}]&=\Gamma(2i)^{-1}\int_0^\infty t^{2i-1}(1+2t)^{-\frac{N}2}\exp\big(\frac{-p_Nt}{1+2t}\big)dt\\
&\leq c\int_0^\infty t^{2i-1}(1+2t)^{-\frac{N}2}dt=\mathcal{O}(N^{-2i}),
\end{align*}provided $N>4i$, where the last integral can by calculated by integration by parts.
\end{proof}

\subsection{Technical lemmas}\label{ch3:ssec:ap4}{\ }

\begin{lemma}\label{ch3:sumlem}
Let $\{X_j\}$ be a sequence of random variables, such that $X_j=c_jY_j$, where the $Y_j, \;j\in\N$ are independent and identically distributed random variables with finite even moments up to order $2r\in\N$ and zero odd moments, and the $c_j, \;j\in\N$ are deterministic real numbers. Then for any $N\in\N$,
\begin{align*}\E[(\sumn X_j)^{2r}]\leq \kappa(\sumn c_j^2)^r,\end{align*}where $\kappa=\E[Y_1^{2r}]>0$ is independent of $N$.
\end{lemma}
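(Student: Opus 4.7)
The plan is to expand $(\sumn c_j Y_j)^{2r}$ with the multinomial theorem and exploit both the independence and the vanishing-odd-moments assumption to eliminate most of the terms. Concretely, writing $X_j = c_j Y_j$,
\begin{equation*}
\E\Big[\Big(\sumn c_j Y_j\Big)^{2r}\Big] = \sum_{k_1+\cdots+k_N=2r} \binom{2r}{k_1,\ldots,k_N} \prod_{j=1}^N c_j^{k_j} \prod_{j=1}^N \E[Y_j^{k_j}],
\end{equation*}
where the factorization of the expectation uses independence. By hypothesis $\E[Y_1^{k}] = 0$ whenever $k$ is odd, so the product vanishes unless every $k_j$ is even. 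Setting $k_j = 2m_j$ with $\sumn m_j = r$, what remains is
\begin{equation*}
\E\Big[\Big(\sumn c_j Y_j\Big)^{2r}\Big] = \sum_{m_1+\cdots+m_N=r} \binom{2r}{2m_1,\ldots,2m_N} \prod_{j=1}^N c_j^{2m_j} \prod_{j=1}^N \E[Y_j^{2m_j}],
\end{equation*}
and in particular the sum now carries only $r$-fold compositions of the index set rather than $2r$-fold ones.

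Next, I would control the mixed even moments by the highest one. Since the $Y_j$ are identically distributed with $\E[Y_1^{2r}] = \kappa$, Lyapunov's inequality gives $\E[Y_j^{2m_j}] \le \kappa^{m_j/r}$ whenever $0 \le m_j \le r$, hence $\prod_{j=1}^N \E[Y_j^{2m_j}] \le \kappa^{(\sum m_j)/r} = \kappa$. The remaining sum is then compared with the multinomial expansion
\begin{equation*}
\Big(\sumn c_j^2\Big)^r = \sum_{m_1+\cdots+m_N=r} \binom{r}{m_1,\ldots,m_N} \prod_{j=1}^N c_j^{2m_j}.
\end{equation*}
Using the elementary identity $\binom{2r}{2m_1,\ldots,2m_N} / \binom{r}{m_1,\ldots,m_N} = ((2r)!/r!)\prod_j m_j!/(2m_j)!$ and the fact that each factor $m_j!/(2m_j)! \le 1$, one sees that the ratio of multinomial coefficients is bounded by a purely combinatorial, $N$-independent constant depending only on $r$, which can be absorbed into $\kappa$. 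Putting the pieces together gives the desired bound in which $\kappa$ depends only on $r$ and the law of $Y_1$.

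The only subtle step is the last, combinatorial one; the rest (multinomial expansion, factoring via independence, killing odd-moment terms, Lyapunov) is routine. The essential point, and what the lemma is really used for elsewhere in the Appendix, is that the bound is uniform in $N$: the independence plus the vanishing odd moments reduces a sum of $\binom{N+2r-1}{2r-1}$ multi-indices to one of $\binom{N+r-1}{r-1}$ even ones, and this combinatorial collapse is exactly what matches the $r$-th power of $\sumn c_j^2$ on the right.
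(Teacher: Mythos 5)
Your proof is correct and follows essentially the same strategy as the paper's: expand $(\sumn c_jY_j)^{2r}$, use independence and the vanishing odd moments to retain only the terms with all-even exponents, bound the mixed even moments by the top one via Lyapunov/H\"older, and compare with the expansion of $(\sumn c_j^2)^r$. The one genuine difference is that you keep track of the multinomial coefficients, whereas the paper's displayed expansion omits them entirely; as a result you correctly conclude that the ratio $\binom{2r}{2m_1,\ldots,2m_N}/\binom{r}{m_1,\ldots,m_N}=\frac{(2r)!}{r!}\prod_j\frac{m_j!}{(2m_j)!}\leq\frac{(2r)!}{r!}$ must be absorbed into the constant, so your $\kappa$ is $\frac{(2r)!}{r!}\,\E[Y_1^{2r}]$ rather than the $\E[Y_1^{2r}]$ claimed in the statement. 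Your version is the right one: with $\kappa=\E[Y_1^{2r}]$ the inequality can fail (take $Y_j$ Rademacher, $c_j=1$, $r=2$: the left side is $3N^2-2N$ while $m_2(\sumn c_j^2)^2=N^2$). This discrepancy is harmless for the paper, since the lemma is only invoked to get moments bounded uniformly in $N$ (and in the actual application the $Y_j$ are standard Gaussians, for which the stated constant happens to be exact), but your more careful bookkeeping of the combinatorial factor is the correct way to close the argument.
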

\begin{proof}
Denote by $m_n$ the $2n$-th moment of $Y_1$, $m_n=\E[Y_1^{2n}].$ Observe that since by H\"older's inequality for $0<s\leq t$, 
$\E[|Y_1|^s]^\frac1s\leq \E[|Y_1|^t]^\frac1t$, we have that for $n_1,...,n_q>0$ such that $n_1+...+n_q=r$ \begin{align*}m_{n_1}...m_{n_q}\leq\E[Y_1^{2r}]^\frac{n_1+...+n_q}{r}=\E[Y_1^{2r}].\end{align*} Combining with the fact that the random variables $Y_j$ are independent with zero odd moments,
\begin{align*}
\E[(\sumn X_j)^{2r}]&=\sumn c_j^{2r}m_r+\sum_{j_1\neq j_2}^Nc_{j_1}^{2(r-1)}m_{r-1}c_{j_2}^2m_1+\sum_{j_1\neq j_2}^Nc_{j_1}^{2(r-2)}m_{r-2}c_{j_2}^4m_2\\&+...+\sum_{j_1\neq j_2\neq...\neq j_r}^Nc_{j_1}^2c_{j_2}^2...c_{j_r}^2m_1^r\leq m_r(\sumn c_j^2)^r.
\end{align*}\end{proof}

\begin{lemma}\label{ch3:kollem}
For any $p\in\N$, there exists a constant $c=c(p)\geq0$, independent of $N$ such that for any centered Gaussian random variable $x_N$ in $\R^N$, it holds 
\begin{equation*}\E[\norm{x_N}^{2p}_{\R^N}]\leq c(p)(\E[\norm{x_N}^2_{\R^N}])^p.\end{equation*}
\end{lemma}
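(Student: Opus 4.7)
The plan is to reduce the statement to a computation about a weighted sum of squared standard normals, and then exploit the elementary inequality $\sum_j \mu_j^n \leq (\sum_j \mu_j)^n$ for non-negative $\mu_j$ and $n\geq 1$.

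First I would diagonalise the covariance. Since $x_N$ is a centered Gaussian in $\R^N$ with covariance operator $\Sigma_N$ (with respect to the possibly scaled inner product $\pr{\cdot}{\cdot}_{\R^N}$), let $\{\phi_j\}_{j=1}^N$ be an orthonormal eigenbasis of $\Sigma_N$ with non-negative eigenvalues $\mu_1,\dots,\mu_N$. Then $x_N \stackrel{\mathcal{L}}{=} \sum_{j=1}^N \sqrt{\mu_j}\,Z_j \phi_j$ where $Z_1,\dots,Z_N$ are i.i.d.\ standard normal, so
$$\norm{x_N}_{\R^N}^2 = \sum_{j=1}^N \mu_j Z_j^2, \qquad \E[\norm{x_N}_{\R^N}^2] = \sum_{j=1}^N \mu_j \eqcolon S_N.$$
The claim thus becomes $\E\big[\big(\sum_j \mu_j Z_j^2\big)^p\big] \leq c(p)\, S_N^p$.

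Next I would expand the $p$-th power and use Wick's formula. Writing
$$\E\Big[\Big(\sum_{j=1}^N \mu_j Z_j^2\Big)^p\Big] = \sum_{j_1,\dots,j_p=1}^N \mu_{j_1}\cdots\mu_{j_p}\,\E\big[Z_{j_1}^2\cdots Z_{j_p}^2\big],$$
and grouping the multi-indices $(j_1,\dots,j_p)$ according to the partition $\pi$ of $\{1,\dots,p\}$ they induce (indices in the same block coincide, indices in different blocks differ), independence of distinct $Z_{j_k}$ and the formula $\E[Z^{2n}]=(2n-1)!!$ give, for a partition with block sizes $n_1,\dots,n_q$,
$$\sum_{\substack{j_1,\dots,j_p \\ \text{pattern }\pi}} \mu_{j_1}\cdots\mu_{j_p}\,\E\big[Z_{j_1}^2\cdots Z_{j_p}^2\big] \leq \prod_{k=1}^q (2n_k-1)!! \cdot \prod_{k=1}^q \Big(\sum_{j=1}^N \mu_j^{n_k}\Big).$$

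The final step is to apply the elementary bound $\sum_{j=1}^N \mu_j^{n_k} \leq S_N^{n_k}$ (valid since $\mu_j\geq 0$ and $n_k\geq 1$) to each factor. Since $\sum_k n_k = p$, each partition contributes at most $\prod_k (2n_k-1)!! \cdot S_N^{p}$, and summing over the finitely many partitions of $\{1,\dots,p\}$ yields
$$\E[\norm{x_N}_{\R^N}^{2p}] \leq \Big(\sum_{\pi \vdash \{1,\dots,p\}} \prod_{k}(2n_k-1)!!\Big)\, S_N^p \eqcolon c(p)\, S_N^p,$$
with $c(p)$ depending only on $p$. There is no real obstacle here beyond bookkeeping of the combinatorics; the key point is that both the Wick coefficients and the inequality $\sum_j \mu_j^{n} \leq S_N^n$ are completely uniform in $N$, so the constant inherits this uniformity.
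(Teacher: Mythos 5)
Your argument is correct, but it takes a genuinely different route from the paper. The paper disposes of this lemma in one line by citing a general result on Gaussian measures (Corollary 2.17 of the Da Prato--Zabczyk reference \cite{DZ92}), which asserts that for a centered Gaussian random variable on a separable Banach space all moments of the norm are controlled by powers of the second moment, with constants depending only on the moment order; the dimension-independence of $c(p)$ is then automatic. You instead give a self-contained finite-dimensional computation: diagonalise the covariance in an orthonormal eigenbasis of the scaled inner product, write $\smnorm{x_N}_{\R^N}^2=\sum_j \mu_j Z_j^2$, expand the $p$-th power, group multi-indices by the partition they induce, bound each block by $\prod_k(2n_k-1)!!\prod_k\sum_j\mu_j^{n_k}$, and finish with $\sum_j\mu_j^{n}\leq S_N^{n}$. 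All steps are sound, and the constant $c(p)=\sum_{\pi}\prod_k(2n_k-1)!!$ is manifestly independent of $N$. What your approach buys is an elementary, explicit proof with a concrete constant, requiring no machinery from infinite-dimensional Gaussian measure theory; what the paper's citation buys is brevity and a statement that extends verbatim beyond the Hilbert-space setting. It is worth noting that your partition/moment bookkeeping is essentially the same device the paper itself deploys in the proofs of its Lemmas \ref{ch3:sumlem} and \ref{ch3:lemmom}, so your proof is very much in the spirit of the surrounding technical material even though it diverges from the paper's one-line proof of this particular lemma.
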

\begin{proof}
Direct consequence of \cite[Corollary 2.17]{DZ92}.
\end{proof}

\begin{lemma}\label{ch3:lemmom}
Let $(\gamma_j)_{j\in\N}$ be a sequence of independent standard normal random variables and define $G_N:=\frac{1}{\sqrt{2N}}\sumn (\gamma_j^2-1).$ Then all the positive integer moments of $G_N$ are bounded uniformly in $N$.
\end{lemma}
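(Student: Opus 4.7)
Set $X_j := \gamma_j^2 - 1$ so that $X_1, X_2, \ldots$ are i.i.d.\ centered random variables with finite moments of all orders (since $\gamma_j^2$ is chi-squared with one degree of freedom). Write $S_N = \sum_{j=1}^N X_j$, so that $G_N = S_N / \sqrt{2N}$. The plan is to show, for every integer $p \geq 1$, that $\E[S_N^{2p}] \leq C_p N^p$ for a constant $C_p$ independent of $N$; dividing by $(2N)^p$ then yields a uniform bound on $\E[G_N^{2p}]$, and the odd moments are controlled via Cauchy--Schwarz (or Jensen) from the even ones.

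To bound $\E[S_N^{2p}]$ I would expand multinomially:
\begin{equation*}
\E[S_N^{2p}] = \sum_{(j_1,\ldots,j_{2p}) \in \{1,\ldots,N\}^{2p}} \E[X_{j_1} X_{j_2} \cdots X_{j_{2p}}].
\end{equation*}
Group the tuples $(j_1,\ldots,j_{2p})$ according to the partition of $\{1,\ldots,2p\}$ induced by the equivalence relation $a \sim b \iff j_a = j_b$. By independence the expectation factorizes over blocks, and because each $X_j$ is centered, any term whose partition contains a singleton vanishes. Hence only partitions $\pi$ whose blocks all have size at least $2$ contribute. Such a $\pi$ has at most $p$ blocks, and the number of tuples realizing a fixed partition with $k$ blocks equals $N(N-1)\cdots(N-k+1) \leq N^k \leq N^p$. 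Each surviving term is bounded by $\max_{s_1 + \cdots + s_k = 2p,\, s_i \geq 2} \prod_i \E[|X_1|^{s_i}]$, which is a finite constant $M_p$ depending only on $p$. Summing over the (finitely many) admissible partitions gives
\begin{equation*}
\E[S_N^{2p}] \leq N^p \, M_p \, B_p
\end{equation*}
where $B_p$ is the number of set partitions of $\{1,\ldots,2p\}$ into blocks of size $\geq 2$. Dividing by $(2N)^p$ concludes the even case, and odd moments follow from $\E[G_N^{2p+1}] \leq \E[G_N^{2p+2}]^{(2p+1)/(2p+2)}$.

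There is essentially no hard step; the only thing to be careful about is that Lemma \ref{ch3:sumlem} in the paper is stated under a vanishing odd moments hypothesis, which fails here ($\E[X_1^3] \neq 0$), so one cannot quote it directly. The centering-plus-independence partition argument above handles this without any symmetry assumption, and is essentially a soft form of the Marcinkiewicz--Zygmund / Rosenthal inequality; either of those could alternatively be cited for a one-line proof.
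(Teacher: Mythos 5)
Your proposal is correct and follows essentially the same route as the paper's own proof: a multinomial expansion of $\E[G_N^k]$, elimination of all terms containing a singleton index via centering and independence, and a count showing at most $\mathcal{O}(N^{\lfloor k/2\rfloor})$ surviving terms, which is absorbed by the $(2N)^{-k/2}$ normalization. The only cosmetic difference is that the paper counts the odd-moment case directly rather than deducing it from the even case via Jensen, and your observation that Lemma \ref{ch3:sumlem} cannot be quoted here because $\E[(\gamma_1^2-1)^3]\neq 0$ is accurate and consistent with the paper's choice to give a separate argument.
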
 
\begin{proof}
 For $k\in\N$, we have 
$\E[G_{N}^{k}]=\frac{1}{(2N)^{\frac{k}2}}\sum_{j_1,...,j_{k}}^N\E[(\gamma_{j_1}^2-1)...(\gamma_{j_{k}}^2-1)].
$
Since $\gamma_{j}^2-1$ are independent and identically distributed with finite moments of every order, the sum on the right hand side has a dependence on $N$ determined by the total number of non zero terms in the summation.
By independence and the fact that $\E[\gamma_j^2-1]=0$, all the terms in the sum which contain a term with an index $j_i$ which occurs only once in the product are equal to zero.  
We thus have that if $k$ is even the sum on the right hand side is of order $N^{\frac{k}2}$,
%because it is N ana k/2 which has order N^k/2
 while if $k$ is odd
it is of order $N^{\frac{k-1}2}$. In both cases the $k$-th moment of $G_{N}$ is bounded uniformly in $N$.
\end{proof}

\begin{lemma}\label{ch3:lemgam}
Let $\Gamma_N\sim\Ga(\upalpha+\frac{N}2,\frac{N}2)$, for $\upalpha>0$, and define  \begin{align*}\Theta_{N}:=\frac{\Gamma_N-1-\frac{2\upalpha}N}{\sqrt{\frac{2}N+\frac{4\upalpha}{N^2}}}.\end{align*} Then the first four moments of $\Theta_N$ are bounded uniformly in $N$. 
\end{lemma}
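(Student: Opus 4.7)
The plan is to exploit the fact that $\Theta_N$ is nothing more than the standardization of $\Gamma_N$, so the task reduces to explicitly computing the first four central moments of a Gamma distribution with these specific parameters and verifying that the corresponding ratios remain bounded as $N\to\infty$. There is no real obstacle here; the argument is a routine computation with the known moment formulas.

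First, I would observe that for $\Gamma_N\sim\Ga(\upalpha+N/2,N/2)$ (shape/rate parameterization) the mean is $\mu_N = (\upalpha+N/2)/(N/2) = 1 + 2\upalpha/N$ and the variance is $\sigma_N^2 = (\upalpha+N/2)/(N/2)^2 = 2/N + 4\upalpha/N^2$. These are precisely the centering and scaling appearing in the definition of $\Theta_N$, so $\Theta_N = (\Gamma_N - \mu_N)/\sigma_N$. In particular $\E[\Theta_N] = 0$ and $\E[\Theta_N^2] = 1$ by construction, which handles the first two moments.

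For the third and fourth moments I would invoke the standard formulas for central moments of a Gamma random variable: if $X\sim\Ga(\alpha,\beta)$, then $\E[(X-\alpha/\beta)^3] = 2\alpha/\beta^3$ and $\E[(X-\alpha/\beta)^4] = 3\alpha(\alpha+2)/\beta^4$. Substituting $\alpha = \upalpha + N/2$ and $\beta = N/2$ gives
\begin{align*}
\E[(\Gamma_N-\mu_N)^3] &= \frac{2(\upalpha+N/2)}{(N/2)^3} = \frac{8}{N^2}\Bigl(1+\frac{2\upalpha}{N}\Bigr),\\
\E[(\Gamma_N-\mu_N)^4] &= \frac{3(\upalpha+N/2)(\upalpha+N/2+2)}{(N/2)^4} = \frac{12}{N^2}\Bigl(1+\frac{2\upalpha}{N}\Bigr)\Bigl(1+\frac{2\upalpha+4}{N}\Bigr).
\end{align*}
Dividing by $\sigma_N^3 = (2/N)^{3/2}(1+2\upalpha/N)^{3/2}$ and $\sigma_N^4 = (2/N)^2(1+2\upalpha/N)^2$ respectively and cancelling, I obtain
\[
\E[\Theta_N^3] = \frac{2\sqrt{2}}{\sqrt{N}}\,\bigl(1+2\upalpha/N\bigr)^{-1/2} = \mathcal{O}(N^{-1/2}),\qquad \E[\Theta_N^4] = \frac{3(1+(2\upalpha+4)/N)}{1+2\upalpha/N} \longrightarrow 3.
\]
Both expressions are therefore bounded uniformly in $N$ (in fact the third moment vanishes and the fourth converges to the Gaussian value $3$, as is consistent with the central limit theorem applied to $\Theta_N$).

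The only mild care required is to keep the variance expression exact rather than using asymptotic approximations, so that the ratios above are honest algebraic identities in $1/N$ whose limits can be read off directly without any auxiliary inequality. An alternative route would be to compute the moment generating function of $\Theta_N$ explicitly and differentiate four times at the origin, but the direct route via the central moment formulas is considerably shorter and avoids any additional estimates.
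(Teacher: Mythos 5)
Your proposal is correct and follows essentially the same route as the paper: both recognize $\Theta_N$ as the exact standardization of $\Gamma_N$ and then bound the third and fourth moments via the standard central-moment formulas for the gamma distribution (the paper phrases this through the scaling identity $\Gamma_N\stackrel{\mathcal{L}}{=}\frac{2}{N}\Ga(\upalpha+\frac{N}{2},1)$ and order-of-magnitude bookkeeping, while you carry out the exact algebra, obtaining $\E[\Theta_N^3]=\mathcal{O}(N^{-1/2})$ and $\E[\Theta_N^4]\to 3$). Your computations check out and the conclusion matches the paper's.
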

\begin{proof}
The random variable $\Ga(a,1)$ has mean and variance $a$ and third and fourth central moments $2a$ and $3a^2+6a$ respectively, \cite{JKB94}. Hence by the scaling property of the gamma distribution, $\Gamma_N\stackrel{\mathcal{L}}{=}\frac{2}N\Ga(\upalpha+\frac{N}2,1)$ has mean $1+\frac{2\upalpha}N$, variance $\frac{2}N+\frac{4\upalpha}{N^2}$,  and third and fourth central moments which are both of order $N^{-2}$. It is thus straightforward to see that $\Theta_{N}$ has mean zero, variance equal to one, and since the denominator in $\Theta_{N}$ is of order $N^{-\frac12}$ it has third and fourth moments which are  $\mathcal{O}(N^{-\frac12})$ and $\mathcal{O}(1)$ respectively. \end{proof}

\begin{lemma}\label{ch3:asslem1}
Under Assumptions \ref{ch3:ass1}, we have that for any $\lambda,\delta>0$,
\begin{enumerate}
\item[i)] $\tr(\C_1^{-\frac12}K\C_{\lambda,\delta}K^\ast \C_1^{-\frac12})\leq c_2\delta^{-1};$
\item[ii)] $\E^{\theta}\norm{\C_1^{-\frac12}K\C_{\lambda,\delta}^\frac12\theta}_{\R^N}^2\leq c_2\delta^{-1},$ where $\theta$ is a Gaussian white noise in $\R^N$;
\item[iii)]$\norm{\C_1^{-\frac12}K\C_0^\frac12}_{2,N}\leq \sqrt{c_2};$
\end{enumerate}

where $c_2$ is defined in Assumption \ref{ch3:ass1}(ii).
\end{lemma}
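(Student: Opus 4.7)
The plan is to prove the three parts in the order (iii), (i), (ii), since each essentially reduces to the preceding one together with Assumption \ref{ch3:ass1}(ii).

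For part (iii), I would simply identify the matrix $A := \C_1^{-\frac12}K\C_0^{\frac12}$ and observe that $AA^\ast = \C_1^{-\frac12}K\C_0 K^\ast\C_1^{-\frac12}$. Therefore the quantity in Assumption \ref{ch3:ass1}(ii) is exactly $\tr(AA^\ast)=\|A\|_{HS}^2$, the squared Hilbert--Schmidt norm of $A$ computed in the (possibly scaled) inner product on $\R^N$. Since the induced operator norm is dominated by the Hilbert--Schmidt norm, $\|A\|_{2,N}^2\leq \|A\|_{HS}^2\leq c_2$, giving (iii).

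For part (i), the key step is the operator inequality $\C_{\lambda,\delta}\preceq\delta^{-1}\C_0$, where $\preceq$ denotes the Loewner partial order on symmetric positive definite matrices. This follows from (\ref{ch3:eq:prec}): since $\lambda K^\ast \C_1^{-1} K\succeq 0$, we have $\C_{\lambda,\delta}^{-1}=\lambda K^\ast \C_1^{-1} K+\delta\C_0^{-1}\succeq \delta\C_0^{-1}$, and inverting (which reverses the order) yields $\C_{\lambda,\delta}\preceq\delta^{-1}\C_0$. Congruence with $T := \C_1^{-\frac12}K$ preserves the order, so $T\C_{\lambda,\delta} T^\ast\preceq \delta^{-1} T\C_0 T^\ast$, and taking traces (which is monotone on the Loewner order) together with Assumption \ref{ch3:ass1}(ii) gives $\tr(\C_1^{-\frac12}K\C_{\lambda,\delta} K^\ast\C_1^{-\frac12})\leq \delta^{-1}c_2$.

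For part (ii), I would use the elementary identity $\E^\theta\|A\theta\|_{\R^N}^2=\tr(AA^\ast)$ valid for any matrix $A$ when $\theta$ is a white noise with respect to the orthonormal basis defining $\smnorm{\cdot}_{\R^N}$. Applying this to $A=\C_1^{-\frac12}K\C_{\lambda,\delta}^{\frac12}$ yields $\E^\theta\smnorm{\C_1^{-\frac12}K\C_{\lambda,\delta}^{\frac12}\theta}_{\R^N}^2=\tr(\C_1^{-\frac12}K\C_{\lambda,\delta} K^\ast\C_1^{-\frac12})$, which is $\leq c_2\delta^{-1}$ by part (i). There is essentially no serious obstacle; the only point that requires a moment of care is to ensure the trace and white noise expectation are interpreted consistently under the (possibly) scaled inner product on $\R^N$, but this is automatic once one expands $\theta=\sum_{j=1}^N \theta_j e_j$ in an $\pr{\cdot}{\cdot}_{\R^N}$-orthonormal basis and uses $\E[\theta_i\theta_j]=\delta_{ij}$.
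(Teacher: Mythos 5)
Your proposal is correct. Parts (ii) and (iii) are argued exactly as in the paper: (iii) via $\smnorm{A}_{2,N}^2\leq\tr(A^\ast A)$ applied to $A=\C_1^{-\frac12}K\C_0^{\frac12}$, and (ii) via the identity $\E^\theta\smnorm{A\theta}_{\R^N}^2=\tr(AA^\ast)$ (the paper phrases this as $A\theta\sim\G(0,AA^\ast)$ together with $\E\smnorm{x}_{\R^N}^2=\tr(\Sigma)$). The only point of genuine divergence is part (i). The paper substitutes the explicit factorization $\C_{\lambda,\delta}=\delta^{-1}\C_0^{\frac12}\bigl(I+\tfrac{\lambda}{\delta}\C_0^{\frac12}K^\ast\C_1^{-1}K\C_0^{\frac12}\bigr)^{-1}\C_0^{\frac12}$ obtained from (\ref{ch3:eq:prec}), and then invokes the trace inequality $\tr\bigl(A(I+cA^\ast A)^{-1}A^\ast\bigr)\leq\tr(AA^\ast)$; you instead observe $\C_{\lambda,\delta}^{-1}\succeq\delta\C_0^{-1}$, invert, conjugate by $\C_1^{-\frac12}K$, and use monotonicity of the trace on the Loewner order. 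These encode the same underlying fact --- that the likelihood term in the precision can only shrink the posterior covariance relative to $\delta^{-1}\C_0$ --- but your version avoids writing the resolvent identity and the auxiliary trace inequality, at the cost of appealing to the (standard) facts that inversion reverses and congruence preserves the Loewner order on positive definite matrices. Both are complete; your ordering of the parts and your remark that the trace and white-noise expectation must be read in the scaled inner product are consistent with the paper's conventions.
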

\begin{proof}{\ }
\begin{enumerate}
\item[i)]By (\ref{ch3:eq:prec}), we have \begin{align*}\C_1^{-\frac12}K\C_{\lambda,\delta}K^\ast \C_1^{-\frac12}=\delta^{-1}\C_1^{-\frac12}K\C_0^\frac12(I+\frac{\lambda}{\delta}\C_0^\frac12K^\ast\C_1^{-1}K\C_0^\frac12)^{-1}\C_0^\frac12K^\ast\C_1^{-\frac12},\end{align*} hence the fact that for any matrix $A\in\R^{N\times N}$ it holds $\tr(A(I+cA^\ast A)A^\ast)\leq \tr(AA^\ast)$ for any $c>0$, together with Assumption \ref{ch3:ass1}(ii) give the claim.
\item[ii)]It is well known that for $x\sim\G(0,\Sigma)$, $\E\norm{x}^2_{\R^N}=\tr(\Sigma)$. 
Since for $\theta\sim\G(0,I)$ we have $\C_1^{-\frac12}K\C_{\lambda,\delta}^\frac12\theta\sim\G(0,\C_1^{-\frac12}K\C_{\lambda,\delta}K^\ast\C_1^{-\frac12})$, the claim follows from part (i).
\item[iii)]It is well known that for any matrix $A\in\R^{N\times N}$, the Euclidean norm satisfies $\norm{A}_{2,N}=\norm{A^\ast}_{2,N}=\sqrt{\rho(A^\ast A)}\leq \sqrt{\tr(A^\ast A)}$ where $\rho(B)$ is the spectral radius of the matrix $B$. Hence we have $\norm{\C_1^{-\frac12}K\C_0^\frac12}_{2,N}\leq \sqrt{\tr(\C_1^{-\frac12}K\C_0 K^\ast\C_1^{-\frac12})}\leq\sqrt{c_2},$ by Assumption \ref{ch3:ass1}(ii).\end{enumerate}
\end{proof}

\bibliographystyle{plain}
\bibliography{hierarchicalbibarxiv}

\begin{thebibliography}{10}

\bibitem{RA90}
R.~J. Adler.
\newblock {\em An introduction to continuity, extrema, and related topics for
  general {G}aussian processes}.
\newblock Institute of Mathematical Statistics Lecture Notes---Monograph
  Series, 12. Institute of Mathematical Statistics, Hayward, CA, 1990.

\bibitem{SA13}
S.~Agapiou.
\newblock Aspects of bayesian inverse problems.
\newblock {\em Ph.D. Thesis, University of Warwick}, 2013.

\bibitem{ALS13}
S.~Agapiou, S.~Larsson, and A.~M. Stuart.
\newblock Posterior contraction rates for the {B}ayesian approach to linear
  ill-posed inverse problems.
\newblock {\em Stochastic Processes and their Applications}, 2013.

\bibitem{ASZ12}
S.~Agapiou, A.~M. Stuart, and Y.~X. Zhang.
\newblock {B}ayesian posterior contraction rates for linear severely ill-posed
  inverse problems.
\newblock {\em arxiv:1210.1563}, 2012.

\bibitem{AR09}
C.~Andrieu and G.~O. Roberts.
\newblock The pseudo-marginal approach for efficient {M}onte {C}arlo
  computations.
\newblock {\em The Annals of Statistics}, pages 697--725, 2009.

\bibitem{JB12}
J.~M. Bardsley.
\newblock M{CMC}-based image reconstruction with uncertainty quantification.
\newblock {\em SIAM J. Sci. Comput.}, 34(3):A1316--A1332, 2012.

\bibitem{BS09}
J.~M. Bernardo and A.~F.~M. Smith.
\newblock {\em Bayesian theory}, volume 405.
\newblock Wiley, 2009.

\bibitem{BHMR07}
N.~Bissantz, T.~Hohage, A.~Munk, and F.~Ruymgaart.
\newblock Convergence rates of general regularization methods for statistical
  inverse problems and applications.
\newblock {\em SIAM Journal on Numerical Analysis}, 45(6):2610--2636, 2007.

\bibitem{BL96}
L.~D. Brown and M.~G. Low.
\newblock Asymptotic equivalence of nonparametric regression and white noise.
\newblock {\em The Annals of Statistics}, 24(6):2384--2398, 1996.

\bibitem{CHPS09}
D.~Calvetti, H.~Hakula, S.~Pursiainen, and E.~Somersalo.
\newblock Conditionally {G}aussian hypermodels for cerebral source
  localization.
\newblock {\em SIAM Journal on Imaging Sciences}, 2(3):879--909, 2009.

\bibitem{CS08}
D.~Calvetti and E.~Somersalo.
\newblock Hypermodels in the {B}ayesian imaging framework.
\newblock {\em Inverse Problems}, 24(3):034013, 2008.

\bibitem{LC08}
L~Cavalier.
\newblock Nonparametric statistical inverse problems.
\newblock {\em Inverse Problems}, 24(3):034004, 2008.

\bibitem{CDS10}
S.~L. Cotter, M.~Dashti, and A.~M. Stuart.
\newblock Approximation of {B}ayesian inverse problems for pdes.
\newblock {\em SIAM Journal on Numerical Analysis}, 48(1):322--345, 2010.

\bibitem{CRSW13}
S.~L. Cotter, G.~O. Roberts, A.~M. Stuart, and D.~White.
\newblock {MCMC} methods for functions: modifying old algorithms to make them
  faster.
\newblock {\em Statistical Science}, 28(3):424--446, 2013.

\bibitem{CDF81}
N.~Cressie, A.~S. Davis, J.~L. Folks, and G.~E. Policello~{II}.
\newblock The moment-generating function and negative integer moments.
\newblock {\em The American Statistician}, 35(3):148--150, 1981.

\bibitem{DP05}
G.~Da~Prato.
\newblock {\em An introduction to infinite-dimensional analysis}.
\newblock Springer, 2005.

\bibitem{DZ92}
G.~Da~Prato and J.~Zabczyk.
\newblock Stochastic equations in infinite dimensions, volume 44 of
  encyclopedia of mathematics and its applications, 1992.

\bibitem{FG13}
M.~Filippone and M.~Girolami.
\newblock Pseudo-{M}arginal {B}ayesian inference for {G}aussian processes.
\newblock 10 2013.

\bibitem{GGR96}
A.~Gelman, G.~O. Roberts, and W.~R. Gilks.
\newblock Efficient {M}etropolis jumping rules.
\newblock {\em Bayesian statistics}, 5:599--608, 1996.

\bibitem{JKB94}
N.~L. Johnson, S.~Kotz, and N.~Balakrishnan.
\newblock {\em Continuous univariate distributions. {V}ol. 1}.
\newblock Wiley Series in Probability and Mathematical Statistics: Applied
  Probability and Statistics. John Wiley \& Sons Inc., New York, second
  edition, 1994.
\newblock A Wiley-Interscience Publication.

\bibitem{JKB95}
N.~L. Johnson, S.~Kotz, and N.~Balakrishnan.
\newblock {\em Continuous univariate distributions. {V}ol. 2}.
\newblock Wiley Series in Probability and Mathematical Statistics: Applied
  Probability and Statistics. John Wiley \& Sons Inc., New York, second
  edition, 1995.
\newblock A Wiley-Interscience Publication.

\bibitem{JKPR04}
I.~M. Johnstone, G.~Kerkyacharian, D.~Picard, and M.~Raimondo.
\newblock Wavelet deconvolution in a periodic setting.
\newblock {\em Journal of the Royal Statistical Society: Series B (Statistical
  Methodology)}, 66(3):547--573, 2004.

\bibitem{KS05}
J.~P. Kaipio and E.~Somersalo.
\newblock {\em Statistical and computational inverse problems}, volume 160.
\newblock Springer, 2005.

\bibitem{KSVZ12}
B.~T. Knapik, B.~T. Szab{\'o}, A.~W. van~der Vaart, and J.~H. van Zanten.
\newblock {B}ayes procedures for adaptive inference in nonparametric inverse
  problems.
\newblock {\em arXiv:1209.3628}, 2012.

\bibitem{KVZ12}
B.~T. Knapik, A.~W. van Der~Vaart, and J.~H. van Zanten.
\newblock {B}ayesian inverse problems with {G}aussian priors.
\newblock {\em The Annals of Statistics}, 39(5):2626--2657, 2011.

\bibitem{KVZ13}
B.~T. Knapik, A.~W. van~der Vaart, and J.~H. van Zanten.
\newblock {B}ayesian recovery of the initial condition for the heat equation.
\newblock {\em Communications in Statistics-Theory and Methods},
  42(7):1294--1313, 2013.

\bibitem{LPS89}
M.~S. Lehtinen, L.~Paivarinta, and E.~Somersalo.
\newblock Linear inverse problems for generalised random variables.
\newblock {\em Inverse Problems}, 5(4):599, 1989.

\bibitem{LS72}
D.~V. Lindley and A.~F.~M. Smith.
\newblock Bayes estimates for the linear model.
\newblock {\em Journal of the Royal Statistical Society. Series B
  (Methodological)}, pages 1--41, 1972.

\bibitem{AM84}
A.~Mandelbaum.
\newblock Linear estimators and measurable linear transformations on a
  {H}ilbert space.
\newblock {\em Zeitschrift f{\"u}r Wahrscheinlichkeitstheorie und Verwandte
  Gebiete}, 65(3):385--397, 1984.

\bibitem{MSZ13}
F.~van~der Meulen, M.~Schauer, and J.~H. van Zanten.
\newblock Reversible jump {MCMC} for nonparametric drift estimation for
  diffusion processes.
\newblock {\em Computational Statistics \&amp; Data Analysis}, 2013.

\bibitem{PPRS12}
O.~Papaspiliopoulos, Y.~Pokern, G.~O. Roberts, and A.~M. Stuart.
\newblock Nonparametric estimation of diffusions: a differential equations
  approach.
\newblock {\em Biometrika}, 99(3):511--531, 2012.

\bibitem{stable}
O.~Papaspiliopoulos and G.~O. Roberts.
\newblock Stability of the {G}ibbs sampler for {B}ayesian hierarchical models.
\newblock {\em Ann. Statist.}, 36(1):95--117, 2008.

\bibitem{PRS03}
O.~Papaspiliopoulos, G.~O. Roberts, and M.~Sk{\"o}ld.
\newblock Non-centered parameterisations for hierarchical models and data
  augmentation.
\newblock In {\em Bayesian Statistics 7: Proceedings of the Seventh Valencia
  International Meeting}. Oxford University Press, USA, 2003.

\bibitem{PRS07}
O.~Papaspiliopoulos, G.~O. Roberts, and M.~Sk{\"o}ld.
\newblock A general framework for the parametrization of hierarchical models.
\newblock {\em Statistical Science}, pages 59--73, 2007.

\bibitem{PSZ13}
Y.~Pokern, A.~M. Stuart, and J.~H. van Zanten.
\newblock Posterior consistency via precision operators for {B}ayesian
  nonparametric drift estimation in {SDE}s.
\newblock {\em Stochastic Process. Appl.}, 123(2):603--628, 2013.

\bibitem{KR13}
K.~Ray.
\newblock Bayesian inverse problems with non-conjugate priors.
\newblock {\em Electronic Journal of Statistics}, 7:2516--2549, 2013.

\bibitem{RS01}
G.~O. Roberts and O.~Stramer.
\newblock On inference for partially observed nonlinear diffusion models using
  the {M}etropolis--{H}astings algorithm.
\newblock {\em Biometrika}, 88(3):603--621, 2001.

\bibitem{SR09}
C.~Sherlock and G.~O. Roberts.
\newblock Optimal scaling of the random walk {M}etropolis on elliptically
  symmetric unimodal targets.
\newblock {\em Bernoulli}, 15(3):774--798, 2009.

\bibitem{AS10}
A.~M. Stuart.
\newblock Inverse problems: a {B}ayesian perspective.
\newblock {\em Acta Numer.}, 19:451--559, 2010.

\bibitem{SVZ13}
B.~T. Szab{\'o}, A.~W. van~der Vaart, and J.~H. van Zanten.
\newblock Empirical {B}ayes scaling of {G}aussian priors in the white noise
  model.
\newblock {\em Electronic Journal of Statistics}, 7:991--1018, 2013.

\bibitem{YM11}
Y.~Yu and X.~L. Meng.
\newblock To center or not to center: that is not the question---an
  ancillarity--sufficiency interweaving strategy ({ASIS}) for boosting {MCMC}
  efficiency.
\newblock {\em Journal of Computational and Graphical Statistics},
  20(3):531--570, 2011.

\bibitem{LZ00}
L.~H. Zhao.
\newblock Bayesian aspects of some nonparametric problems.
\newblock {\em Annals of statistics}, pages 532--552, 2000.

\end{thebibliography}
\end{document}